 \newtheorem{pro}{Proposition}[section]
 \newtheorem{lem}[pro]{Lemma}
 \newtheorem{Lem}[pro]{Lemma}
 \newtheorem{rem}[pro]{Remark}
 \newtheorem{exa}[pro]{Example}
 \newtheorem{Exa}[pro]{Example}
  \newtheorem{defi}[pro]{Definition}
 \newtheorem{thm}[pro]{Theorem}
 \newtheorem{ques}[pro]{Question}
 \theoremstyle{plain}
\newtheorem*{namedthm}{\namedthmname}
\newcounter{namedthm}
\newenvironment{named}[1]
  {\def\namedthmname{#1}%
   \refstepcounter{namedthm}%
   \namedthm\def\@currentlabel{#1}}
  {\endnamedthm}
 \newcommand{\R}{\mathbb R}
 \newcommand{\C}{\mathbb C}
 \newcommand{\N}{\mathbb N}
 \newcommand{\e}{\varepsilon}
 \newcommand{\vep}{\varepsilon}
 \newcommand{\capo}{{\rm Cap}_{\omega}}
 \newcommand{\capfp}{{\rm Cap}_{\varphi,\psi}}
 \newcommand{\captheta}{{\rm Cap}_{\theta}}
 \newcommand{\f}{\varphi}
 \newcommand{\p}{\psi}
 \newcommand \psh {{\rm PSH}}
 \newcommand \MA {{\rm MA}}
 \newcommand \Amp {{\rm Amp}}
 \newcommand \vol{{\rm Vol}}
 \newcommand \Ec{\mathcal E }
 \newcommand \mrm{\mathrm}
 \numberwithin{equation}{section}
\begin{document}

 \title{Plurisubharmonic envelopes and supersolutions}
\author{Vincent Guedj, Chinh H. Lu, Ahmed Zeriahi}

\address{Institut de Math\'ematiques de Toulouse   \\ Universit\'e de Toulouse \\
118 route de Narbonne \\
31400 Toulouse, France\\}

\email{vincent.guedj@math.univ-toulouse.fr}

\address{Laboratoire de Math\'ematiques d'Orsay\\
 Univ. Paris-Sud\\
 CNRS, Universit\'e Paris-Saclay\\
  91405 Orsay, France.}

\email{hoang-chinh.lu@math.u-psud.fr}

\address{Institut de Math\'ematiques de Toulouse   \\ Universit\'e de Toulouse \\
118 route de Narbonne \\
31400 Toulouse, France\\}

\email{ahmed.zeriahi@math.univ-toulouse.fr}

\thanks{The authors are partially supported by the ANR project GRACK}

 \date{\today}
 
 \begin{abstract}
 We make a systematic study of (quasi-)plurisubharmonic envelopes on compact K\"ahler manifolds, as well as on domains of $\C^n$,
 by using and extending an approximation process due to Berman \cite{Ber13}. We show that the 
quasi-psh envelope of a viscosity super-solution  is a pluripotential super-solution
of a given complex Monge-Amp\`ere equation. We use these ideas to solve complex Monge-Amp\`ere equations by taking lower envelopes of super-solutions.
\end{abstract}

\maketitle

\tableofcontents

\section*{Introduction}
Weak subsolutions and supersolutions to complex Monge-Amp\`ere equations play a central role in the development of complex analysis and geometry.  
These   have been studied extensively, in the pluripotential sense, since the fundamental works of Bedford and Taylor \cite{BT76,BT82}.  

The notions of viscosity sub/super/solutions to  complex Monge-Amp\`ere equations  
\begin{equation}
\tag{CMA}
(\theta+dd^c u)^n =e^u f dV
\end{equation}
have been introduced in \cite{HL09,EGZ11,Wang12,HL13}. 
It has been notably shown in \cite[Theorem 1.9 and Proposition 1.11]{EGZ11} that an u.s.c. function is a viscosity subsolution if and only if it is
plurisubharmonic and a pluripotential subsolution.
The connection between viscosity and pluripotential supersolutions has however remained mysterious so far.
Our first main result gives a satisfactory answer to 
\cite[Question 40]{DGZ16}:

\begin{named}{Theorem A}\label{thmA}
	Let $v$ be a viscosity supersolution to a complex Monge-Amp\`ere equation $(CMA)$.
The (quasi-)plurisubharmonic envelope $P(v)$ is a pluripotential supersolution
to $(CMA)$.
\end{named}

The envelope $P(v)$ depends on the context (local/global) (see Section \ref{sec:envelope}).
We refer the reader to section \ref{sec:viscosity} for the definition of viscosity supersolutions
and to Theorems \ref{thm: visco super vs pluri compact} and \ref{thm:supersolloc} for more precise statements.

The proof of this result relies on an approximation scheme which is of inependent
interest. This method has been introduced by Berman (see \cite[Theorem 1.1]{Ber13})
when $v$ is smooth. We need to extend it here in order to deal with functions
$v$ which are  less regular, proving in particular the following :

\begin{named}{Theorem B}\label{thmB}
	Let $(X,\omega)$ be a $n$-dimensional compact K\"ahler manifold
and fix $\mu$  an arbitrary non-pluripolar positive measure.
 Let $v:X \rightarrow \R$ be a bounded Borel-measurable function.
 Let $\f_j \in \Ec^1(X,\omega)$ be the unique solution to the complex Monge-Amp\`ere equation
 $$
 (\omega+dd^c \f_j)^n =e^{j(\f_j-v)} \mu.
 $$
 
 Then $(\f_j)$ converges in capacity to  the $(\omega,\mu)$-envelope $P_{\omega,\mu}(v)$, where
 $$
 P_{\omega,\mu}(v)=\left(\sup \{ \varphi \in \psh(X,\omega) \; ; \; \f \leq v \; \text{ $\mu$-a.e.  on } X \}\right)^* .
 $$
\end{named}

When $\mu$ is a volume form on $X$ and $u$ is quasi-upper-semicontinuous on $X$, the $\omega$-psh upper $(\omega,\mu)$-envelope is just the usual $\omega$-psh envelope (see Proposition  \ref{prop: modified envelope}).  

Recall that a function $v$ is {\it quasi-continuous} 
(resp. {\it  quasi-usc})
if for all $\e>0$,  there exists an open subset $U$ with capacity less than $\vep$ such that the restriction of $v$ to the complement $X\setminus U$ is continuous (resp usc). Quasi-psh functions form a large class
of quasi-continuous functions. We use here the {\it Monge-Amp\`ere capacity} whose definition
is recalled in Section \ref{subsec:cap}.

\smallskip

It is classical that the minimum $\min(u,v)$ of two viscosity supersolutions $u,v$ is again a  viscosity supersolution.
Note however that the minimum of two psh functions is no longer psh.
It  follows nevertheless from (an extension of) \ref{thmA} that $P(\min(u,v))$ is both psh and a pluripotential supersolution.
We extend this observation in Lemma \ref{lem: partition inequality} far beyond the viscosity frame (which deals with continuous densities),
and use it to solve  complex Monge-Amp\`ere equations.
We show in particular the following:

\begin{named}{Theorem C}\label{thmC}
	Let $(X,\omega)$ be a compact K\"ahler manifold and $\mu$ a non-pluripolar Radon measure in some open subset of $X$.
 Assume there exists a finite energy 
 subsolution $u_0 \in {\mathcal E}(X,\omega)$, i.e. such that 
 $$
 (\omega+dd^c u_0)^n \geq e^{u_0} \mu.
 $$
 
 Then the envelope of supersolutions
 $$
 \f:=P\left( \inf \{ \p, \p \in {\mathcal E}(X,\omega) \text{ and } (\omega+dd^c \p)^n \leq e^{\p} \mu\} \right)
 $$
 is the unique pluripotential solution of $ (\omega+dd^c \f)^n = e^{\f} \mu$.
\end{named}

 This result could be restated in the more familiar form: if there exists a subsolution, then there exists a solution.
 Let us emphasize here that the measure $\mu$ is  not necessarily  a Radon measure in all of $X$.
 A particular case of interest in complex differential geometry is when $\mu=fdV_X$ is a volume form
 whose density is smooth and positive in some Zariski open set $\Omega$, but does not belong to $L^1$.
 The existence of a subsolution insures that $f$ does not blow up too fast near $\partial \Omega$
 and is easy to check in many concrete examples: we thus provide an alternative proof of the existence of K\"ahler-Einstein metrics 
 on varieties with negative first Chern class and semi log-canonical singularities,
 a result first obtained by different techniques by Berman-Guenancia \cite{BG14}.
 
\smallskip

On our way to proving the above theorems, we establish several other results of independent interest:
we show in particular that
\begin{itemize}
\item the finite energy functional $I_p$ satisfies a quasi-triangle inequality for any $p>0$
(Theorem \ref{thm: quasi triangle inequality}  largely extends \cite[Theorem 1.8]{BBEGZ11});
\item generalized capacities are all quantitatively comparable (see Theorem \ref{thm: DNL big} which generalizes  \cite[Theorem 2.9]{DNL14b}).
\end{itemize}

 \medskip 
 
 \noindent {\bf Organization of the paper.} 
We introduce the main pluripotential tools in Section \ref{sec: preliminaries}, providing simplifications and extensions of some useful results (see Theorems \ref{thm: quasi triangle inequality} and \ref{thm: DNL big}).
We make a systematic study of (q)psh envelopes in Section \ref{sec:envelope} and establish \ref{thmB}. Inspired by this convergence result we prove \ref{thmA} in Section \ref{sec:viscosity}.  \ref{thmC} is proved in Section \ref{sec:resolutionMA} while other applications are given in Section \ref{sec:applications}.

\medskip

 \noindent {\bf Acknowledgement.}    We warmly thank Robert Berman for fruitful discussions concerning his convergence method.

\section{Preliminaries}\label{sec: preliminaries}

We review recent results in pluripotential theory  and establish a few extensions of the latter that are needed in this paper. 
We refer the reader to \cite{BT82,Kol98,GZ05,BEGZ10} and the references therein for more details.

In the whole paper, $(X,\omega)$ is a compact K\"ahler manifold of dimension $n\in \N^*$, and  $\theta$ is a closed smooth $(1,1)$-form on $X$.

\subsection{Pluripotential theory in big cohomology classes}

A function $u$ is called {\it quasi-plurisubharmonic} on $X$ ({\em qpsh} for short) if in any local holomorphic coordinates it can be written as $u=\rho+\varphi$ where $\rho$ is smooth and $\varphi$ is plurisubharmonic. It is called {\it  $\theta$-plurisubharmonic} ($\theta$-psh for short) if it additionally satisfies 
$$
\theta +dd^c u \geq 0
$$
 in the weak sense of currents. We let $\psh(X,\theta)$ denote the set of all $\theta$-psh functions on $X$ which are not identically $-\infty$.  
 
 By definition the class $[\theta]$ is {\it big} if 
 there exists $\psi\in \psh(X,\theta)$ such that $\theta +dd^c \psi\geq \vep \omega$ for some small constant $\vep>0$.

A function $u$ has {\it analytic singularities} if  it can locally be written as 
\[
u(z) = c \log \sum_{j=1}^k |f_j(z)|^2 + h(z),
\]
where the $f_j's$ are local holomorphic functions and $h$ is  smooth.

By the fundamental approximation theorem of Demailly \cite{Dem92} any quasi-plurisubharmonic function $u$ can be approximated from above by a sequence $(u_j)$ of $(\alpha+\vep_j\omega)$-psh functions with analytic singularities, where $\alpha$ is a closed smooth $(1,1)$-form such that $\alpha +dd^c u\geq 0$. Applying this result to the potential $\psi$ of the K\"ahler current $\theta +dd^c \psi$,
 it follows that  there exists $\theta$-psh functions with analytic singularities. 

Following \cite{Bou04,BEGZ10} the  {\it ample locus} of $\{\theta\}$ is defined to be the set of all $x\in X$ such that there exists a $\theta$-psh function on $X$ with analytic singularities which is smooth in a neighborhood of $x$. 

A $\theta$-psh function $u$ is said to have  {\it minimal singularities} if it is less singular than any other $\theta$-psh function on $X$, more precisely if for any $v\in \psh(X,\theta)$ there exists a constant $C=C(u,v)$ such that $u\geq v-C$. 
The function 
$$
V_{\theta}:=\sup \{ \f \; : \f \in {\rm PSH}(X,\theta) \text{ and } \f \leq 0 \}
$$
has minimal singularities.

If $u_1,...,u_k$ are $\theta$-psh functions with minimal singularities on $X$, then they are locally bounded in the ample locus $\Omega:=\Amp(\{\theta\})$. By the seminal work of Bedford-Taylor \cite{BT76,BT82}  the current $\theta_{u_1}\wedge ...\wedge \theta_{u_k}$ is well defined, and positive in $\Omega$. 
As the total mass is bounded, one can extend it trivially to the whole of $X$. It was proved in \cite{BEGZ10} that the current obtained by this trivial extension is closed. In particular, when $k=n$ and $u_1=...=u_n$ this procedure defines the (non-pluripolar) Monge-Amp\`ere measure of $u$. 

For a general $\theta$-psh function $u$,
the  approximants $u_j:=\max(u,V_{\theta}-j)$  have minimal singularities. One can show that the sequence of positive Borel measures 
${\bf 1}_{\{u > V_{\theta}-j\}}(\theta+dd^c u_j)^n$ is increasing in $j$. Its limit 
$$
\MA_\theta(u):=\lim \nearrow {\bf 1}_{\{u\geq V_{\theta}-j\}}(\theta+dd^c u_j)^n
$$
(in the strong sense of Borel measures) is  {\it  the non-pluripolar Monge-Amp\`ere measure} of $u$.  We denote it by  $\MA(u)$ if no confusion can arrive. 

The  {\it volume} of the class  $[\theta]$  is given by the total mass of the Monge-Amp\`ere measure of $V_{\theta}$
(see \cite{Bou04,BEGZ10}).

A set $E\subset X$ is called pluripolar if locally it is contained in the $-\infty$ locus of some plurisubharmonic function. It was shown in \cite{GZ05} that $E$ is pluripolar if and only if $E\subset \{\phi=-\infty\}$ for some $\phi \in \psh(X,\omega)$. One can replace the K\"ahler form $\omega$ by any big form $\theta$. Indeed if $\theta$ is big then there exists $\psi \in \psh(X,\theta)$ such that $\theta +dd^c \psi\geq \vep \omega$ for some positive constant $\vep$. The function $\psi' :=\vep \phi + \psi$ is $\theta$-psh and its $-\infty$-locus contains $E.$

\subsection{Convergence in capacity} \label{subsec:cap}
Following \cite{BT82,Kol03,GZ05,BEGZ10} we consider the Monge-Amp\`ere capacity with respect to the form $\theta$ ,
\[
{\rm Cap}_{\theta}(E) := \sup \left\{\int_E\MA(u) \ :\ u\in \psh(X,\theta), \ V_{\theta}-1\leq u\leq V_{\theta} \right\}.
\]
A sequence $(u_j)$ is said to converge in capacity  to $u$ if for any $\vep>0$,
\[
\lim_{j\to +\infty}{\rm Cap}_{\theta} (|u_j-u|>\vep) \to 0. 
\] 

The definition does not depend on  $\theta$ because ${\rm Cap}_{\theta}$ and ${\rm Cap}_{\theta'}$ are comparable for any big form $\theta'$. Indeed, it was proved in \cite[Theorem 2.3]{DDL16} that there exists a constant $C=C(\theta,\theta') \geq 1$ such that
\[
C^{-1}{\rm Cap}_{\theta}^n \leq {\rm Cap}_{\theta'} \leq C{\rm Cap}_{\theta}^{1/n}.
\] 

 The following lemma will be used several times in this paper:

\begin{lem}\label{lem: plurifine convergence}
	Assume that a sequence $(u_j) $  in $\psh(X,\theta)$ has uniformly minimal singularities (i.e. there exists $C>0$ such that $u_j\geq V_{\theta}-C$ for all $j$) and converges in capacity to $u$. Then $\MA(u_j)$  weakly converges to $\MA(u)$.  Moreover, if $\varphi_1,\varphi_2$ are quasi-plurisubharmonic functions then 
	\[
	\liminf_{j\to +\infty} {\bf 1}_{\{\varphi_1<\varphi_2\}} \MA(u_j) \geq {\bf 1}_{\{\varphi_1<\varphi_2\}} \MA(u),
	\]
	and 
	$
	\limsup_{j\to +\infty} {\bf 1}_{\{\varphi_1\leq \varphi_2\}} \MA(u_j) \leq {\bf 1}_{\{\varphi_1\leq \varphi_2\}} \MA(u).
	$
\end{lem}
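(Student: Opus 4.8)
The plan is to prove the weak convergence $\MA(u_j) \to \MA(u)$ first, and then derive the two one-sided inequalities as refinements using the plurifine topology. For the weak convergence, since all the $u_j$ have uniformly minimal singularities, they are uniformly bounded below by $V_\theta - C$ on the ample locus $\Omega = \Amp(\{\theta\})$, where the Monge-Amp\`ere measures are genuine Bedford-Taylor products. The standard strategy is to test against a continuous function $\chi$ with compact support in $\Omega$ and to exploit the multilinearity of the wedge product together with integration by parts. Concretely, I would write the difference $\int \chi\, \MA(u_j) - \int \chi\, \MA(u)$ as a telescoping sum over the factors $\theta + dd^c u_j$ versus $\theta + dd^c u$, so that each term contains a factor $dd^c(u_j - u)$; integrating by parts moves the $dd^c$ onto $\chi$ and leaves $(u_j - u)$ times mixed Monge-Amp\`ere currents of the uniformly bounded potentials. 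Convergence in capacity of $u_j \to u$ then forces these integrals to zero, since $u_j - u$ is small in capacity and the mixed currents have uniformly bounded mass. The fact that the limiting measure on the complement of $\Omega$ carries no mass (both sides are non-pluripolar and $X \setminus \Omega$ is pluripolar) handles the trivial extension.

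For the two inequalities, the key tool is the plurifine locality of the Monge-Amp\`ere operator: on a plurifine open set where $u_j = \max(u_j, w)$ for a suitable bounded $w$, the measures $\MA(u_j)$ agree with those of the maxima, and maxima are well-behaved under convergence in capacity. For the $\liminf$ inequality on the open-type set $\{\varphi_1 < \varphi_2\}$, I would fix a compact (or more precisely a plurifine-closed) piece inside this set and approximate the indicator from below by continuous functions supported there; the semicontinuity of the wedge product under decreasing-type limits, combined with the already-established weak convergence, yields the lower bound. The $\limsup$ inequality on the closed-type set $\{\varphi_1 \le \varphi_2\}$ is handled dually, passing to complements: since $\{\varphi_1 > \varphi_2\} = \{\varphi_2 < \varphi_1\}$ is of open type, the $\liminf$ inequality already proved there, together with total-mass conservation from the weak convergence, forces the reverse inequality on the complement.

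A cleaner way to organize the one-sided statements is to reduce them to a single principle: if $g$ is lower semicontinuous (resp. upper semicontinuous) and nonnegative, then $\liminf_j \int g\, \MA(u_j) \ge \int g\, \MA(u)$ (resp. the reverse with $\limsup$). The indicator ${\bf 1}_{\{\varphi_1 < \varphi_2\}}$ is lower semicontinuous in the plurifine topology because $\{\varphi_1 < \varphi_2\}$ is plurifine open (as $\varphi_1, \varphi_2$ are quasi-psh, hence plurifine continuous), and ${\bf 1}_{\{\varphi_1 \le \varphi_2\}}$ is plurifine upper semicontinuous. Approximating these plurifine-semicontinuous indicators by plurifine-continuous functions and invoking quasi-continuity of quasi-psh functions to pass from plurifine to the genuine topology up to sets of small capacity is the technical heart of the argument.

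The main obstacle I expect is the passage from weak convergence of measures to the localized one-sided inequalities, because the sets $\{\varphi_1 < \varphi_2\}$ and $\{\varphi_1 \le \varphi_2\}$ are only plurifine open/closed, not open/closed in the usual topology, so one cannot directly apply the portmanteau theorem. The delicate point is controlling the behavior of $\MA(u_j)$ near the boundary $\{\varphi_1 = \varphi_2\}$: convergence in capacity gives no information there, and mass could in principle concentrate on this level set. The resolution is to use quasi-continuity of $\varphi_1, \varphi_2$ to remove a set of arbitrarily small capacity on which they behave badly, together with the uniform bound on the total Monge-Amp\`ere mass $\int_X \MA(u_j) = \vol(\{\theta\})$ to ensure that the mass discarded on the excised small-capacity set is uniformly negligible. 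This uniform mass bound, which follows from the uniformly minimal singularities, is precisely what makes the approximation scheme close up.
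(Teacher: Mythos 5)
Your architecture is sound and in fact more self-contained than the paper's own proof, which simply invokes Xing's continuity theorem \cite{X96} for the weak convergence on relatively compact subsets of the ample locus and Bedford--Taylor \cite{BT87} for the plurifine open/closed dichotomy; your complementation step (deducing the $\limsup$ inequality on $\{\varphi_1\le \varphi_2\}$ from the $\liminf$ inequality on the plurifine open set $\{\varphi_2<\varphi_1\}$ together with conservation of total mass) is correct. However, there is a genuine gap at exactly the point where the hypothesis of uniformly minimal singularities must enter, and it occurs twice: in the telescoping step you argue that the error integrals vanish because ``$u_j-u$ is small in capacity and the mixed currents have uniformly bounded mass'', and in the excision step you argue that the mass discarded on the exceptional set of small capacity is negligible because of ``the uniform bound on the total Monge--Amp\`ere mass $\int_X \MA(u_j)=\vol(\{\theta\})$''. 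A uniform bound on total mass does not prevent measures from concentrating on sets of arbitrarily small capacity: one can easily produce bounded $\theta$-psh functions $u_j$ (hence with full mass $\vol(\{\theta\})$) whose Monge--Amp\`ere measures put mass bounded away from zero on sets $G_j$ with $\captheta(G_j)\to 0$. So, as written, both closing steps are unjustified.

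What actually closes the argument is a uniform domination of the measures by capacity, and this is precisely what uniformly minimal singularities provides. Normalize so that $u_j\le V_\theta$ (an upper bound $u_j\le V_\theta+C'$ for $j$ large follows from the convergence in capacity, so this normalization costs a bounded constant), and set $M:=C+C'$. Then $v_j:=M^{-1}u_j+(1-M^{-1})V_\theta$ satisfies $V_\theta-1\le v_j\le V_\theta$ and $\theta+dd^c v_j\ge M^{-1}(\theta+dd^c u_j)$, hence for every Borel set $E\subset X$,
\[
\MA(u_j)(E) \;\le\; M^n \int_E \MA(v_j) \;\le\; M^n\, \captheta(E),
\]
uniformly in $j$; the same device, applied to the local Bedford--Taylor theory on a neighborhood of ${\rm supp}\,\chi \Subset \Amp(\{\theta\})$ where all the potentials are uniformly bounded, dominates the mixed currents appearing in your telescoping sum. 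Substituting this estimate for the total-mass argument repairs both halves of your proof: in the integration-by-parts step, split each error integral according to $\{|u_j-u|\le \vep\}$ (where the uniform bound on the mass of the mixed currents suffices) and $\{|u_j-u|>\vep\}$ (where the capacity domination applies, since $|u_j-u|$ is uniformly bounded on ${\rm supp}\,\chi$); and in the excision step the same estimate gives $\sup_j \MA(u_j)(G)\le M^n\captheta(G)$, which is uniformly small for the excised open set $G$ furnished by quasi-continuity of $\varphi_1,\varphi_2$.
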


\begin{proof}
The weak convergence of $\MA(u_j)$ to $\MA(u)$ on each relatively compact open subset $U$ of
the ample locus  $\Omega$ follows from \cite[Theorem 1]{X96}. 
The complement $X\setminus U$ can be chosen to have arbitrarily small capacity, as $X\setminus \Omega$ is pluripolar.  This justifies the weak convergence of the Monge-Am\`ere measures on the whole of $X$. The second statement follows since  $\{\varphi_1<\varphi_2\}$ is open and $\{\varphi_1\leq \varphi_2\}$ is closed in the plurifine topology (see \cite{BT87}).
\end{proof}

We now provide a simple criterion insuring that a sequence converges in capacity (recall that convergence in
$L^1$ does not imply convergence of the Monge-Amp\`ere measures \cite{Ceg83}) :

\begin{lem}\label{lem: easy convergence in cap}
Assume that $(u_j) \in PSH(X,\theta)^{\N}$   converges  to $u\in \psh(X,\theta)$ in $L^1(X)$. 
If $u_j \geq u + \vep_j\psi$ for any $j \in \N$, where
 $\psi$ is quasi-plurisubharmonic and $(\vep_j)$  decreases to $0$, then $(u_j)$ converges in capacity towards $u$. 
\end{lem}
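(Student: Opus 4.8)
The plan is to estimate $\captheta(\{|u_j-u|>\vep\})$ for a fixed $\vep>0$ by splitting the bad set into the two one-sided pieces $\{u_j<u-\vep\}$ and $\{u_j>u+\vep\}$, and to control them by two entirely different mechanisms: the pointwise lower bound $u_j\ge u+\vep_j\psi$ handles the first, while $L^1$-convergence together with the $\theta$-psh structure handles the second.

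\textbf{Lower deviation.} On $\{u_j<u-\vep\}$ the hypothesis $u_j\ge u+\vep_j\psi$ forces $\vep_j\psi<-\vep$, hence
\[
\{u_j<u-\vep\}\subseteq\{\psi<-\vep/\vep_j\}.
\]
Since $\vep_j\downarrow 0$ we have $\vep/\vep_j\to+\infty$. As $\psi$ is quasi-plurisubharmonic, its sublevel sets $\{\psi<-s\}$ decrease, as $s\to+\infty$, to the pluripolar set $\{\psi=-\infty\}$, and the standard estimate that the capacity of the sublevel sets of a fixed quasi-psh function tends to $0$ gives $\captheta(\{\psi<-s\})\to0$. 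Thus $\captheta(\{u_j<u-\vep\})\to0$.

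\textbf{Upper deviation.} Here I would run a Hartogs-type argument. Put $\phi_j:=(\sup_{k\ge j}u_k)^*\in\psh(X,\theta)$; this sequence decreases and satisfies $\phi_j\ge u_j$. The sub-mean value inequality applied in local charts, together with $u_k\to u$ in $L^1$, yields $\limsup_k u_k\le u$ pointwise, while a subsequence converging a.e. gives $\limsup_k u_k\ge u$ almost everywhere; hence $\phi_j\downarrow u$. A decreasing sequence of $\theta$-psh functions converges in capacity to its limit, so $\captheta(\{\phi_j>u+\vep\})\to0$; since $u_j\le\phi_j$, this controls $\{u_j>u+\vep\}$.

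Combining the two steps gives $\captheta(\{|u_j-u|>\vep\})\to0$ for every $\vep>0$, which is the asserted convergence in capacity. I expect the main obstacle to be the upper deviation: $L^1$-convergence alone never implies capacity convergence (this is exactly Cegrell's phenomenon recalled above), so the argument must genuinely exploit the $\theta$-psh structure through the regularized upper envelope and the principle that monotone convergence implies convergence in capacity. By contrast, the hypothesis $u_j\ge u+\vep_j\psi$ enters only in the comparatively soft lower-deviation step, where it precisely prevents $u_j$ from dipping far below $u$ on sets of non-negligible capacity.
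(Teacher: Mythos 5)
Your proof is correct and follows essentially the same route as the paper: the decisive step — the inclusion $\{u_j<u-\vep\}\subset\{\psi<-\vep/\vep_j\}$ combined with the decay of $\captheta(\{\psi<-s\})$ as $s\to+\infty$ — is exactly the paper's argument, while your upper-deviation step via $\phi_j:=(\sup_{k\ge j}u_k)^*$ and monotone convergence in capacity is an explicit version of what the paper compresses into ``classical arguments'' (there phrased as the convergence in capacity of $\max(u_j,u)$ to $u$, after a preliminary truncation reducing to bounded functions). The only real difference is cosmetic and in your favor: the envelope argument makes the truncation step unnecessary, since decreasing convergence of $\theta$-psh functions implies convergence in capacity without any boundedness assumption.
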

The result may be well-known to experts. The (sketch of) the proof is given below for the reader's convenience. 
\begin{proof}
	By multiplying $\omega$ with some big constant $A>0$ we can assume that $u_j,u,\psi$ are $\omega$-psh. We can also assume that all functions involved here are negative. By \cite[Proposition 3.6]{GZ05}  we have
	\[
	\capo(\varphi<-t)\leq \frac{C}{t}, \forall t>0,
	\]   
	where $C>0$ depends on $\int_X \varphi \ \omega^n$. It thus suffices to prove that $\max(u_j,-t)$ converges to $\max(u,-t)$ in capacity for any $t>0$. For this reason we can now assume  that $u_j,u$ are bounded. 
	
Classical arguments  show that the sequence 
	$v_j:= \max(u_j,u)$ converges in capacity to $u$. Thus it remains to show that for a fixed $\vep>0$, 
	\[
	\lim_{j\to +\infty}\capo (u_j \leq u - \vep) =0.
	\]
The proof is completed by  noting that
	\[
	\{u_j\leq u-\vep \} \subset \left\{\psi \leq -\frac{\vep}{\vep_j} \right\}. 
	\]
\end{proof}

\subsection{Finite energy classes}
Finite energy classes have been introduced in \cite{GZ07}  and further studied in \cite{BEGZ10,DDL16}. 
They provide a very convenient frame to study convergence properties of $\theta$-psh potentials.

\begin{defi}
A function $u\in \psh(X,\theta)$ belongs to $\Ec(X,\theta)$ if the total mass of $\MA(u)$ is equal to the volume of $\theta$. 
\end{defi}

 It is proved in \cite{DDL16} building on \cite{Dar13} that the singularity type of functions in $\Ec(X,\theta)$ is the same as that of $\theta$-psh functions with minimal singularities (in particular they have the same Lelong numbers at every point). 
 
 \begin{defi}
 For $p>0$, the set $\Ec^p(X,\theta)$ consists of functions $u$ in $\Ec(X,\theta)$ such that $\int_X |u-V_{\theta}|^p \MA(u)<+\infty$.   
 \end{defi}
 
More generally, a weight is a smooth increasing function $\chi: \mathbb{R}\rightarrow \mathbb{R}$ such that $\chi(-\infty)=-\infty$.  The class $\mathcal{E}_{\chi}(X,\theta)$ consists of functions $u\in \mathcal{E}(X,\theta)$ such that 
\[
\int_X |\chi(u)| \MA(u)<+\infty.
\] 

The (unnormalized) Monge-Amp\`ere energy of a $\theta$-psh function $u$ with minimal singularities is
\[
E(u):= \frac{1}{n+1}\sum_{k=0}^{n}\int_X(u-V_{\theta}) (\theta+dd^c u)^k \wedge (\theta+dd^c V_{\theta})^{n-k}. 
\]
One extends the definition to arbitrary $\theta$-psh functions by 
\[
E(\varphi):=\inf\{E(u) \ :\ u\in \psh(X,\theta) \ \textrm{with minimal singularities}\ , \ u\geq \varphi \}.
\]

It is shown in \cite{BEGZ10,BBGZ13} that $\varphi \in \Ec^1(X,\theta)$ if and only if $E(\varphi)$ is finite. Moreover $E$ is increasing, concave along affine curves in $\Ec^1(X,\theta)$ and upper semicontinuous with respect to the $L^1$-topology.

We need the following generalization of \cite[Proposition 2.10]{BEGZ10}:

\begin{lem}\label{lem: BEGZ prop 2.10}
	Fix $p>0$. Assume that $(u_j)$ is a decreasing sequence of $\theta$-psh functions with minimal singularities and $\varphi\in \Ec^p(X,\theta)$ is such that
	\[
	\sup_{j\in \N^*}\int_X|u_j-\varphi|^p\MA(u_j) <+\infty.
	\]
	Then $u:=\lim_{j}u_j$ belongs to $\Ec^p(X,\theta)$.
\end{lem}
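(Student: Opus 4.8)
The plan is to prove this generalization of \cite[Proposition 2.10]{BEGZ10} by establishing two things: first, that the decreasing limit $u := \lim_j u_j$ belongs to $\mathcal{E}(X,\theta)$, i.e.\ has full Monge-Amp\`ere mass; and second, that the finite energy integral $\int_X |u - V_\theta|^p \MA(u)$ is finite. The first point is the more delicate structural statement, while the second will follow by a semicontinuity argument once the first is in hand.

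For the membership in $\mathcal{E}(X,\theta)$, my approach is to control the total mass of $\MA(u_j)$ away from the region where $u_j$ is very negative relative to $V_\theta$. The uniform bound $M := \sup_j \int_X |u_j - \varphi|^p \MA(u_j) < +\infty$ is the key resource. On the set $\{u_j \leq V_\theta - t\}$ for large $t$, and using that $\varphi \in \Ec^p(X,\theta)$ so that $\varphi$ has the same (minimal) singularity type, I expect that $|u_j - \varphi|$ is bounded below by a multiple of $t$ on a suitable subset, so a Chebyshev-type estimate gives
\[
\int_{\{u_j \le V_\theta - t\}} \MA(u_j) \leq \frac{C}{t^p}
\]
for a constant $C$ depending on $M$ and on $\varphi$, uniformly in $j$. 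This uniform tail bound says the masses $\MA(u_j)$ do not escape to the "most singular" locus. Combined with the fact that $(u_j)$ is decreasing, one shows $\int_X \MA(u) = \vol(\theta)$, i.e.\ no mass is lost in the limit; here I would invoke the monotone convergence behaviour of the non-pluripolar product along decreasing sequences together with the uniform integrability coming from the tail estimate, which prevents concentration on a pluripolar set. This is the step I expect to be the main obstacle, since it requires carefully tracking mass of $\MA(u_j)$ on superlevel sets and passing to the limit without loss, which is exactly where the original \cite[Proposition 2.10]{BEGZ10} does its work for $p=1$.

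Once $u \in \mathcal{E}(X,\theta)$ is known, I would prove the quantitative energy bound $\int_X |u - V_\theta|^p \MA(u) < +\infty$. Since $(u_j)$ decreases to $u$ and all have minimal singularities while $u$ may not, I would first work with the integrands $|u_j - V_\theta|^p$, which are uniformly integrable against $\MA(u_j)$: the uniform bound on $\int_X |u_j - \varphi|^p \MA(u_j)$ transfers to a uniform bound on $\int_X |u_j - V_\theta|^p \MA(u_j)$ because $\varphi - V_\theta$ is bounded above (as $\varphi \in \mathcal{E}^p$ has minimal singularity type), modulo a triangle-inequality estimate of the form $|u_j - V_\theta|^p \le 2^{p-1}(|u_j - \varphi|^p + |\varphi - V_\theta|^p)$. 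Then I would pass to the limit using Lemma~\ref{lem: plurifine convergence} together with a Fatou-type argument: the convergence in capacity of $u_j \to u$ (which follows from the decreasing $L^1$ convergence on the ample locus) yields weak convergence $\MA(u_j) \to \MA(u)$, and lower semicontinuity of the mass integrals gives
\[
\int_X |u - V_\theta|^p \MA(u) \leq \liminf_{j\to+\infty} \int_X |u_j - V_\theta|^p \MA(u_j) < +\infty.
\]

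The technical care needed throughout is the truncation: to apply weak convergence against the unbounded weight $|u - V_\theta|^p$, I would work with the bounded cutoffs $\max(u, V_\theta - s)$ and $\min(|u - V_\theta|^p, R)$, establish the estimate at each finite level, and then let $s, R \to \infty$ using monotone convergence on the left and the uniform tail bound from the first part to justify that the right-hand side stays controlled. The overall structure thus mirrors the $p=1$ argument of \cite{BEGZ10}, with the exponent $p$ entering only through the elementary convexity inequality for $|\cdot|^p$ and the Chebyshev estimate with $t^p$ in the denominator; the genuinely new input over the cited proposition is replacing the comparison function of minimal singularities by an arbitrary $\varphi \in \mathcal{E}^p(X,\theta)$, which is harmless precisely because such $\varphi$ shares the minimal singularity type.
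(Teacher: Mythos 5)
There is a genuine gap, and it sits exactly at the point you declare ``harmless'': your claim that $\varphi\in\Ec^p(X,\theta)$ has the minimal singularity type, i.e.\ that $\varphi\geq V_{\theta}-C$ for some constant $C$. This is false: finite energy functions are in general unbounded relative to $V_{\theta}$, and this is precisely why the classes $\Ec^p(X,\theta)$ are strictly larger than the class of potentials with minimal singularities. (The paper's citation of \cite{DDL16} says that full-mass currents have the same Lelong numbers as minimal-singularity potentials, not that they differ from $V_{\theta}$ by a bounded function.) The false premise is used twice in an essential way. First, in your Chebyshev estimate: on $\{u_j\leq V_{\theta}-t\}$ you have no lower bound on $|u_j-\varphi|$, because $\varphi$ itself may lie far below $V_{\theta}-t$ on part of that set, so $\int_{\{u_j\leq V_{\theta}-t\}}\MA(u_j)\leq C t^{-p}$ does not follow. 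Second, in the transfer step: after writing $|u_j-V_{\theta}|^p\leq 2^{p-1}\left(|u_j-\varphi|^p+|\varphi-V_{\theta}|^p\right)$ you must bound $\int_X|\varphi-V_{\theta}|^p\,\MA(u_j)$ uniformly in $j$; knowing that $\varphi-V_{\theta}$ is bounded \emph{above} is of no help, since $|\varphi-V_{\theta}|$ blows up exactly where $\varphi$ is very negative, and the hypothesis $\varphi\in\Ec^p(X,\theta)$ only gives integrability of $|\varphi-V_{\theta}|^p$ against $\MA(\varphi)$, not against the varying measures $\MA(u_j)$. Exchanging the reference measure in such integrals is the hard content of the lemma, not a formality; were your premise true, the statement would be an immediate corollary of \cite[Proposition 2.10]{BEGZ10} and the convexity inequality, and there would be nothing to prove.

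The paper confronts this difficulty head on. For $p=1$ it uses concavity of the Monge-Amp\`ere energy together with the inequality $E(u_j)-E(\varphi)\geq\int_X(u_j-\varphi)\MA(u_j)$ from \cite{BBGZ13} to get a uniform lower bound on $E(u_j)$, and then applies \cite[Proposition 2.10]{BEGZ10}. For general $p>0$ it invokes the quasi-triangle inequality for $I_p$ (Theorem \ref{thm: quasi triangle inequality}), whose proof is a comparison-principle argument designed precisely to pass between sublevel-set integrals against $\MA(u)$, $\MA(v)$ and $\MA(\varphi)$; this is the ingredient your outline is missing, and any repair of your Chebyshev and transfer steps would essentially have to reproduce it. A further caveat: your Fatou step cannot directly invoke Lemma \ref{lem: plurifine convergence}, since that lemma assumes the sequence has \emph{uniformly} minimal singularities ($u_j\geq V_{\theta}-C$ with $C$ independent of $j$), which a decreasing sequence whose limit lacks minimal singularities cannot satisfy.
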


\begin{proof}
If $p=1$ we can use the concavity of the Monge-Amp\`ere energy $E$. It follows from the assumption and from \cite[proposition 2.1]{BBGZ13} that 
	\[
	E(u_j) -E(\varphi) \geq \int_X (u_j-\varphi) \MA(u_j) \geq -C,
	\]
	for some   $C>0$. Hence Proposition 2.10 in \cite{BEGZ10} gives the conclusion. 
	
	We now deal with the general case $p>0$.
If $\theta$ is additionally semipositive, and $p>1$, it follows from \cite{DNG16} (see \cite{Dar14,Dar15} for the K\"ahler case) that the   functional  
\[
I_p(u,v) := \int_X |u-v|^p(\MA(u)+\MA(v)) 
\] 
satisfies a quasi-triangle inequality. From this observation and the assumption we get a
uniform  bound on $I_p(u_j,0)$, hence $\lim_j u_j$ belongs to $\Ec^p(X,\theta)$. 
For an arbitrary $\theta$, and $p>0$, we use the quasi-triangle inequality from Theorem \ref{thm: quasi triangle inequality} below.
\end{proof}

The following quasi-triangle inequality insures that $I_p$ induces a uniform structure on $\Ec^p(X,\theta)$:
\begin{thm}\label{thm: quasi triangle inequality}
	Let $p>0$ and $u,v,\varphi \in \psh(X,\theta)$ with minimal singularities. There exists a uniform constant $C>0$ depending on $p,n$ such that 
	\[
	I_p(u,v) \leq C(I_p(u,\varphi) + I_p(v,\varphi)). 
	\]
\end{thm}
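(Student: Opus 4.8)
The plan is to reduce the asserted inequality to a single \emph{cross-term} estimate. Writing $|u-v|\le|u-\varphi|+|v-\varphi|$ and using the elementary inequality $(a+b)^p\le C_p(a^p+b^p)$, valid for all $p>0$ with $C_p=\max(1,2^{p-1})$, one gets
\[
I_p(u,v)\le C_p\int_X\bigl(|u-\varphi|^p+|v-\varphi|^p\bigr)\,(\MA(u)+\MA(v)).
\]
Two of the four resulting terms are harmless, since $\int_X|u-\varphi|^p\MA(u)\le I_p(u,\varphi)$ and $\int_X|v-\varphi|^p\MA(v)\le I_p(v,\varphi)$. Thus the theorem follows once I establish, for potentials with minimal singularities,
\[
\int_X|u-\varphi|^p\,\MA(v)\le C\bigl(I_p(u,\varphi)+I_p(v,\varphi)\bigr),
\tag{$\star$}
\]
the remaining cross term $\int_X|v-\varphi|^p\MA(u)$ being obtained by exchanging the roles of $u$ and $v$.

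The difficulty in $(\star)$ is that $|u-\varphi|^p$ is integrated against the ``wrong'' measure $\MA(v)$, which cannot be compared pointwise to $\MA(u)$ or $\MA(\varphi)$. To handle this I would normalize all functions to be negative and $\le V_\theta$ (possible since they have minimal singularities) and introduce $w:=\max(u,v,\varphi)$, which again has minimal singularities. The locality of the non-pluripolar Monge--Amp\`ere operator along plurifine-open sets (Lemma~\ref{lem: plurifine convergence} and \cite{BT87}) gives $\MA(w)=\MA(v)$ on $\{v>\max(u,\varphi)\}$, and similarly $\MA(w)=\MA(u)$, resp. $\MA(w)=\MA(\varphi)$, where $u$, resp. $\varphi$, realizes the maximum; combined with the comparison principle $\int_{\{a<b\}}\MA(b)\le\int_{\{a<b\}}\MA(a)$ this lets me transfer the mass of $\MA(v)$ onto the comparable potentials $w\ge u$ and $w\ge\varphi$. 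Testing these identities against the bounded weight $|u-\varphi|^p$ reduces $(\star)$ to \emph{monotone} estimates, i.e. to bounding $\int_X(b-a)^p\MA(a)$ and $\int_X(b-a)^p\MA(b)$ against one another when $a\le b$.

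The heart of the matter is therefore this monotone estimate. For $p=1$ it is essentially free: the energy cocycle gives $\tfrac{1}{n+1}I_1(a,b)\le E(b)-E(a)\le I_1(a,b)$ for $a\le b$, so the two integrals $\int(b-a)\MA(a)$ and $\int(b-a)\MA(b)$ are comparable with constants depending only on $n$. For general $p\ge1$ I would obtain the analogous bound by iterated integration by parts in the ample locus in the spirit of \cite{BEGZ10}, exploiting that $b-a$ is bounded and that $\MA(a)$, $\MA(b)$ carry the same total mass $\vol(\{\theta\})$; for semipositive $\theta$ this is precisely the inequality of \cite{DNG16}, and the present statement is the extension to an arbitrary big class.

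The main obstacle I anticipate is twofold. First, the constants must depend only on $p$ and $n$ and remain uniform over potentials that are merely minimally singular (not bounded): this forces the integration by parts to be carried out on relatively compact subsets of $\Amp(\{\theta\})$, with the error terms near the boundary controlled through the quantitative comparison of (generalized) capacities of Theorem~\ref{thm: DNL big}. Second, the range $0<p<1$ lies outside the convexity that underlies the $p\ge1$ integration by parts; there I would dispense with convexity altogether and instead combine the plurifine localization above with a dyadic decomposition of the sublevel sets $\{2^{-k-1}<|a-b|\le 2^{-k}\}$, applying the comparison principle on each piece and summing, the subadditivity $(a+b)^p\le a^p+b^p$ keeping the resulting constant finite.
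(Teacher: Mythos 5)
Your reduction to the cross-term estimate $(\star)$ is correct and correctly isolates the difficulty, but the mechanism you propose for proving $(\star)$ has two genuine gaps. First, the mass transfer only works on the plurifine open set $\{v>\max(u,\varphi)\}$, where indeed $\MA(v)=\MA(w)$. On the complementary region $\{v\le\max(u,\varphi)\}$ --- exactly the region where $v$ sits below the other two potentials --- neither of your tools applies: plurifine locality says nothing about $\MA(v)$ there, and the comparison principle runs in the \emph{wrong} direction (it bounds the Monge--Amp\`ere measure of the larger potential by that of the smaller one on $\{a<b\}$, whereas there $v$ is the smaller one, so $\MA(v)$ dominates rather than being dominated). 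The only fallback, $|u-\varphi|^p\le C_p\left(|u-v|^p+|v-\varphi|^p\right)$, reintroduces $\int_X|u-v|^p\MA(v)$, i.e.\ a piece of $I_p(u,v)$ itself, with a constant $C_p\ge 1$, so no absorption is possible and the argument is circular; the same circularity reappears after your transfer from $\MA(w)$ down to $\MA(u)$, since $(w-u)^p\le C_p\left(|u-v|^p+|u-\varphi|^p\right)$. Relatedly, the comparison principle cannot be ``tested against the bounded weight $|u-\varphi|^p$'': through the layer-cake formula it only accepts weights that are functions of the difference $b-a$ of the two potentials being compared.

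Second, the two-sided monotone estimate you invoke is false in one direction. For $a\le b$ with minimal singularities one does have $\int_X(b-a)^p\MA(b)\le\int_X(b-a)^p\MA(a)$ (comparison principle plus layer cake), but the reverse inequality fails with any uniform constant: take $\theta=\omega$ K\"ahler, $b=0$ and $a_j=j^{-1}\max(\psi,-j^{n+2})$ where $\psi\le 0$ is $\omega$-psh with a positive Lelong number at some point; then $\int_X(b-a_j)\MA(b)\to 0$ while $\int_X(b-a_j)\MA(a_j)\to+\infty$ (this is precisely the example used in Section \ref{sec: preliminaries} to show that convergence in capacity does not imply convergence in energy). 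In particular your sandwich $\frac{1}{n+1}I_1(a,b)\le E(b)-E(a)\le I_1(a,b)$ does not make the two integrals comparable: it only says that $E(b)-E(a)$ and $I_1(a,b)$ are both comparable to the dominant term $\int_X(b-a)\MA(a)$. The missing idea --- and the crux of the proof given in the paper --- is a convexity trick that reroutes all problematic mass onto the \emph{good} measure $\MA(\varphi)$: since $\theta+dd^c\frac{u+2v}{3}\ge\frac{1}{3}(\theta+dd^cu)$ one has $\MA(u)\le 3^n\MA\left(\frac{u+2v}{3}\right)$, and the elementary inclusion $\{\varphi-s\le u<v-2s\}\subset\left\{\varphi<\frac{u+2v}{3}-\frac{s}{3}\right\}$ places one in position to apply the comparison principle to the pair $\left(\varphi,\frac{u+2v}{3}\right)$, landing on $\MA(\varphi)$ with a weight controlled by $|\varphi-u|+|\varphi-v|$. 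Combined with the layer-cake formula this yields the theorem with $C=C(n,p)$, and no cross term of the form $\int_X|u-\varphi|^p\MA(v)$ ever needs to be estimated.
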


As indicated above, this result was already known when $\theta$ is semi-positive. The proof given here covers the general case
and also provides a simpler proof of the previous cases.

\begin{proof}
Observe that for a positive measure $\mu$ and a non-negative measurable function $f$ on $X$ the integral $\int_X f d\mu$ can be expressed as 
\begin{equation}
	\label{eq: quasi-triangle inequality 0}
	\int_X f d\mu = \int_0^{+\infty} \mu(f>t) dt.
\end{equation}
Using this and a change of variable $t=(2s)^p$ we can write 
\begin{eqnarray}
	\int_X |u-v|^p \MA(u) &=& \int_0^{+\infty} \MA(u) (|u-v|^p>t) dt \nonumber \\
	&=& 2^p p\int_0^{+\infty} s^{p-1}\MA(u) (|u-v|>2s) ds. \label{eq: quasi-triangle inequality 1}
\end{eqnarray}
For $s>0$ we observe that the following inclusion holds 
\[
(\varphi-s\leq u <v-2s) \subset \left(\varphi < \frac{u+2v}{3} -\frac{s}{3}\right).  
\]
Hence, using this and the trivial inclusion 
\[
(u<v-2s)\subset (u<\varphi-s) \cup (\varphi-s\leq u <v-2s)
\]
we can write 
\begin{eqnarray}
	\MA(u) (u<v-2s) &\leq &  \MA(u) (u<\varphi-s)\nonumber \\
	&+ & \MA(u)\left(\varphi < \frac{u+2v}{3} -\frac{s}{3}\right). \nonumber 
\end{eqnarray}
Using this, the inequality $\MA(u)\leq 3^n \MA((u+2v)/3)$ and the comparison principle, we obtain 
	\begin{equation*}
		 \MA(u) \left(\varphi < \frac{u+2v}{3} -\frac{s}{3}\right) \leq  3^n  \MA(\varphi) \left(\varphi <\frac{u+2v}{3} -\frac{s}{3}\right).
	\end{equation*}
The comparison principle also yields 
\[
\MA(u) (v<u-2s) \leq \MA(v)(v<u-2s). 
\]
We then use the same argument as above to get
\begin{multline*}
	\MA(u) (|u-v|>2s) \leq \MA(u)(u<\varphi-s) + \MA(v)(v<\varphi-s) \\
	+  3^n  \MA(\varphi) \left(\varphi <\frac{u+2v}{3} -\frac{s}{3}\right) + 3^n  \MA(\varphi) \left(\varphi <\frac{v+2u}{3} -\frac{s}{3}\right).
\end{multline*}
From this and \eqref{eq: quasi-triangle inequality 0} we thus obtain 
\begin{multline*}
	\int_0^{+\infty} ps^{p-1} \MA(u) (|u-v|>2s) \leq \int_{0}^{+\infty} p s^{p-1}\MA(u)(u<\varphi -s)ds \\
	+  \int_{0}^{+\infty} p s^{p-1}\MA(v)(v<\varphi -s) ds + 3^n \int_0^{+\infty} ps^{p-1}  \MA(\varphi) \left(\varphi <\frac{u+2v}{3} -\frac{s}{3}\right)ds\\
	+ 3^n \int_0^{+\infty} ps^{p-1}  \MA(\varphi) \left(\varphi <\frac{2u+v}{3} -\frac{s}{3}\right)ds \\
	\leq \int_X |u-\varphi|^p \MA(u) + \int_X |v-\varphi|^p \MA(v)\\
	+ 3^{n+p}\int_X \left | \varphi-\frac{u+2v}{3}\right |^p \MA(\varphi) 
	+ 3^{n+p}\int_X \left | \varphi-\frac{2u+v}{3}\right |^p \MA(\varphi).
\end{multline*}
Using this, \eqref{eq: quasi-triangle inequality 1} and the elementary inequality 
\[
(a+b)^p \leq \max(2^{p-1},1) (a^p +b^p)
\] 
for $a,b>0, p>0$,  we arrive at 
\begin{multline*}
	\int_X |u-v|^p \MA(u) \leq 2^p \int_X |u-\varphi|^p \MA(u) + 2^p \int_X |v-\varphi|^p \MA(v)\\
	+ 3^n 2^{2p+1} \int_X\left( |\varphi-u|^p + |\varphi-v|^p\right) \MA(\varphi). 
\end{multline*} 
We then proceed similarly to treat the term $\int_X |u-v|^p \MA(v)$ and arrive at the conclusion.
\end{proof}

\subsection{Convergence in energy}

Monotone convergence implies convergence in capacity, which insures convergence of the Monge-Amp\`ere operator, as indicated above. 
A stronger notion of convergence has been introduced in \cite{BBEGZ11}:

\begin{defi}
A sequence $(u_j) \in \Ec^1(X,\theta)^{\N}$  converges in energy to $u \in \Ec^1(X,\theta)$ if
\[
0\leq I(u_j,u) := \int_X (u_j-u) (\MA(u) -\MA(u_j)) \to 0. 
\]
\end{defi}

The functional $I$ is well adapted to {\it normalized} potentials. For unnormalized ones, one should use
$$
I_1(u_j,u)=\int_X |u_j-u| \left(\MA(u_j)+\MA(u) \right).
$$
We let the reader check that if a sequence $(u_j) \in \Ec^1(X,\theta)^{\N}$ is normalized by $\sup_X u_j=0$,
it converges in energy to $u$ if and only if $I_1(u_j,u) \rightarrow 0$.

\smallskip

It is shown in \cite{BBEGZ11} that convergence in energy implies convergence in capacity. 
The converse is however not true  as the following example shows:

\begin{exa}
Assume  $\omega$ is a K\"ahler form and $\varphi$ is a $\omega$-psh function which 
locally near a point $z_0$ (identified with zero in a local coordinate chart) is defined by $\varphi(z)= a\log |z|^2$, for some 
$a>0$ small enough. Then the $\omega$-psh function $\varphi$  has a Dirac Monge-Amp\`ere mass at $0$. 

Define 
\[
u_j:=\frac{1}{j} \varphi_j\ ; \ \varphi_j:=\max(\varphi,-j^{n+2}).
\]
We let the reader check  that $u_j$ converges  to $u=0$ in capacity.
For any $j\in \mathbb{N}$ we have $\MA(\varphi_j)(\varphi\leq -j^{n+2})\geq c>0$. In fact, we only need to know that $\varphi\not \in \Ec(X,\omega)$ and $c$ is the loss of the total mass of the non-pluripolar Monge-Amp\`ere measure of $\varphi$.  Then the energy of $u_j$ is computed by 
\[
\int_X |u_j| \MA(u_j) \geq \int_{\{\varphi\leq -j^{n+2}\}} |u_j|\MA(u_j)\geq j^{n+1}\int_{\{\varphi\leq -j^{n+2}\}}j^{-n}\MA(\varphi_j)\geq j c.
\]
This shows that $(u_j)$ does not converge to $0$ in energy.
\end{exa}


The next result says that these convergences are equivalent if the sequence is bounded from below by a finite energy function. 

\begin{pro}
\label{lem: capacity implies energy}
Let  $u_j,u \in \Ec^p(X,\theta)$, $p>0$, and assume that $u_j$ converges in capacity to $u$.  If there exists $\varphi\in \Ec^p(X,\theta)$ such that $u_j\geq \varphi, \forall j$, then $u_j$ converges  to $u$ with respect to the quasi-distance $I_p$, i.e. 
\[
\int_X |u_j-u|^p(\MA(u_j)+ \MA(u)) \to 0. 
\] 
\end{pro}

Conversely one can show that if $u_j$ converges in energy to $u$, then up to extracting and relabelling, there exists
$\varphi\in \Ec^p(X,\theta)$ such that $u_j\geq \varphi, \forall j$.

\begin{proof}
The idea of the proof is essentially contained in \cite{GZ07}, \cite{BDL15}. It costs no generality to assume that $u_j\leq 0$ which also implies that $u\leq 0$. By dominated convergence it suffices to prove that 
\[
\lim_{j\to +\infty} \int_X |u_j-u|^p  \MA(u_j) =0. 
\]
By using a truncation argument as in \cite[Theorem 2.6]{GZ07} one can show that $\MA(u_j)$ converges weakly to $\MA(u)$. 
We claim that
\[
\lim_{s\to +\infty} \int_{\{u_j\leq u-s\}} |u_j-u|^p \MA(u_j) =0
\]
uniformly in $j$. Indeed the comparison principle in $\Ec(X,\theta)$ shows that
{\small
\begin{eqnarray*}
\int_{\{u_j\leq u-s\}} |u_j-u|^p \MA(u_j) &=&p\int_s^{+\infty} t^{p-1}\MA(u_j) (u_j< u-t)dt\\
&\leq & 2^n p  \int_{s}^{+\infty} t^{p-1} \MA\left(\frac{u_j+u}{2}\right) \left(\varphi<\frac{u_j+u}{2} -\frac{t}{2}\right)dt\\
&\leq & 2^np \int_{s}^{+\infty} t^{p-1}\MA(\varphi) (\varphi<V_{\theta}-t/2)dt. 
\end{eqnarray*}
}
The last term converges to zero as $s\to +\infty$ because $\varphi\in \Ec^p(X,\theta)$. Thus the claim is proved. One can also show that 
\[
\lim_{s\to +\infty} \int_{\{u \leq  u_j-s\}} |u_j-u|^p \MA(u_j) =0
\]
uniformly in $j$. 

Therefore, it remains to show that 
\[
\lim_{j\to +\infty} \int_{\{|u_j-u|\leq s\}} |u_j-u|^p \MA(u_j) =0
\]
for any fixed $s>0$. But the latter  follows from Theorem \ref{thm: DNL big} below.
\end{proof}

\subsection{Generalized capacities}
Let $(X,\omega)$ be a compact K\"ahler manifold of dimension $n$ and fix $\theta$ a smooth closed $(1,1)$-form such that $[\theta]$ is big. 

\begin{defi}
For $\varphi,\psi \in \psh(X,\theta)$ such that $\varphi<\psi$ the $(\varphi,\psi)$-capacity is defined by
\[
\capfp(E) := \sup\left \{ \int_E \MA(u) \, :\, u\in \psh(X,\theta), \varphi\leq u\leq \psi\right \}, \ E\subset X. 
\] 
\end{defi}

When $\theta$ is K\"ahler these generalized Monge-Amp\`ere capacities were introduced in \cite{DNL14a,DNL14b}.  
When $\varphi=\psi-1=V_{\theta}-1$ we recover the Monge-Amp\`ere capacity ${\rm Cap}_{\theta}$. 

It was observed by Di Nezza \cite{DN15} that for each $t\geq 1$, setting $\varphi=V_{\theta}-t, \psi=V_{\theta}$, one has
\[
{\rm Cap}_{\theta}(E) \leq \capfp(E) \leq t^n {\rm Cap}_{\theta}(E), \ \forall E\subset X. 
\]
When $\theta$ is K\"ahler all $(\varphi,\psi)$-capacities with $\varphi\in \Ec(X,\theta)$ are comparable \cite{DNL14b}. We generalize this result in the context of big classes. 

\begin{thm} 	\label{thm: DNL big}
	Assume that $0\geq \varphi\in \Ec_{\chi}(X,\theta)$ for some convex weight $\chi: \R^{-}\rightarrow \R^{-}$.
 Then there exists a continuous function $F_{\chi}:\R^{+}\rightarrow \R^{+}$   such that $F_{\chi}(0)=0$ and 
 for all $E\subset X$,
	\[
	\capfp(E) \leq F_{\chi}({\rm Cap}_{\theta}(E)).
	\]
\end{thm}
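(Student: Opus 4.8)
\emph{Setup and strategy.} Consistently with the normalization $\varphi\le 0$ and the discussion preceding the statement, I take $\psi=V_{\theta}$, so that every competitor in the definition of $\capfp$ satisfies $\varphi\le u\le V_{\theta}$ (the case of a smaller $\psi$ only decreases the capacity and follows a fortiori). The plan is the classical two–scale splitting: fix a competitor $u$ with $\varphi\le u\le V_{\theta}$ and a level $t\ge 1$, and write
$\int_E \MA(u)=\int_{E\cap\{u>V_{\theta}-t\}}\MA(u)+\int_{E\cap\{u\le V_{\theta}-t\}}\MA(u).$
The first term will be controlled by $\captheta(E)$, the second by a tail estimate that is uniform in $u$ and tends to $0$ as $t\to+\infty$.

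\emph{The bounded part.} On the plurifine open set $\{u>V_{\theta}-t\}$ the truncation $w:=\max(u,V_{\theta}-t)$ coincides with $u$, hence $\MA(u)=\MA(w)$ there by plurifine locality of the non-pluripolar Monge--Amp\`ere operator (Bedford--Taylor, as used in Lemma \ref{lem: plurifine convergence}). Since $V_{\theta}-t\le w\le V_{\theta}$, the function $w$ is an admissible competitor for ${\rm Cap}_{V_{\theta}-t,V_{\theta}}$, so by Di Nezza's inequality recalled above (valid for $t\ge 1$),
$\int_{E\cap\{u>V_{\theta}-t\}}\MA(u)=\int_{E\cap\{u>V_{\theta}-t\}}\MA(w)\le \int_E\MA(w)\le t^n\,\captheta(E).$

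\emph{The tail part (the crux).} Here I must bound $\int_{\{u\le V_{\theta}-t\}}\MA(u)$ uniformly over all competitors $u$. The plan is to exploit the convexity of $\chi$ through the monotonicity of the weighted Monge--Amp\`ere energy (standard for convex weights, see \cite{GZ07,BEGZ10}): for $\varphi\le u\le V_{\theta}$ one has
$\int_X|\chi(u-V_{\theta})|\,\MA(u)\le \int_X|\chi(\varphi-V_{\theta})|\,\MA(\varphi)=:C_\chi<+\infty.$
Since $\chi$ is increasing, on the set $\{u-V_{\theta}\le -t\}$ we have $|\chi(u-V_{\theta})|\ge |\chi(-t)|$, so Chebyshev's inequality gives $\int_{\{u\le V_{\theta}-t\}}\MA(u)\le C_\chi/|\chi(-t)|$, a bound \emph{independent of $u$} which tends to $0$ as $t\to+\infty$ because $\chi(-\infty)=-\infty$. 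I expect this uniform $\chi$-energy estimate to be the main obstacle: it is precisely where convexity of the weight is essential (the monotonicity, and hence the whole statement, breaks for concave weights), and it is the point where the hypothesis $\varphi\in\Ec_\chi$ must be upgraded to a uniform control over the entire order interval $[\varphi,V_{\theta}]$.

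\emph{Optimization.} Combining the two parts yields $\capfp(E)\le t^n\,\captheta(E)+C_\chi/|\chi(-t)|$ for every $t\ge 1$. Taking the infimum over $t$, I set $F_\chi(x):=\inf_{t\ge 1}\big(t^n x+C_\chi/|\chi(-t)|\big)$, so that $\capfp(E)\le F_\chi(\captheta(E))$. The function $F_\chi$ is nonnegative, nondecreasing, and concave, being an infimum of affine nondecreasing functions of $x$; hence it is continuous on $(0,+\infty)$. Finally $F_\chi(0)=\inf_{t\ge 1}C_\chi/|\chi(-t)|=0$, and right-continuity at $0$ follows by first choosing $t$ large so that $C_\chi/|\chi(-t)|<\e$ and then $x$ small so that $t^n x<\e$, which completes the verification of the required properties of $F_\chi$.
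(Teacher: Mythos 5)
Your argument is sound in structure and reaches the conclusion, and its skeleton coincides with the paper's: the same reduction to $\psi=V_\theta$, the same splitting at the level $V_\theta-t$, the same bound $t^n\captheta(E)$ on the part where $u>V_\theta-t$ (the paper reproves Di Nezza's inequality inline via the rescaling $t^{-1}\max(u,V_\theta-t)+(1-t^{-1})V_\theta$, where you cite it directly), a Chebyshev tail estimate, and optimization over $t$ (your construction of $F_\chi$ as an infimum of affine functions is in fact cleaner than the paper's explicit choice $t=\captheta(E)^{-1/(n+1)}$). The genuine divergence is the tail estimate, and that is also where your one error sits. The paper never needs any energy control on the competitors $u$: it applies the comparison principle twice, through the auxiliary function $(u+V_\theta-t)/2$, to dominate $\int_{\{u\le V_\theta-t\}}\MA(u)$ by $2^n\int_{\{\varphi\le V_\theta-t/2\}}\MA(\varphi)$, and only then applies Chebyshev to the \emph{fixed} minorant $\varphi$; in particular only the monotonicity of $\chi$ enters, not its convexity. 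You instead import the fundamental inequality (monotonicity of the weighted energy) from \cite{GZ07,BEGZ10}. That result is true and your use of it is legitimate, but \emph{not with constant $1$} as you wrote it: for $n=1$, $\theta=\omega$ K\"ahler and $\chi(t)=t$, the first variation of $u\mapsto\int_X(-u)\MA(u)$ in a direction $f\ge 0$ equals $\int_X f\,\omega-2\int_X f\,\MA(u)$, which is positive wherever $\MA(u)<\omega/2$, so a larger function can have strictly larger energy. The correct statement carries a multiplicative constant $C(n)$ (and it also holds for concave weights with controlled growth, with a constant depending on the growth, so it does not simply break there). Since this constant only multiplies $C_\chi$ in your tail bound, your optimization step and the verification of the properties of $F_\chi$ go through verbatim once it is inserted.
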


\begin{proof}
We can assume without loss of generality that $\sup_X \psi=0$. Then $\psi \leq V_{\theta}$, hence
$$
\capfp(E) \leq {\rm Cap}_{\f,V_{\theta}}(E).
$$
It thus suffices to treat the case $\p=V_{\theta}$.

	Fix $E\subset X$ a non-pluripolar Borel subset. We can assume $\captheta(E)<1$.  Fix a constant $t>1$, a 
	$\theta$-psh function $u$ such that $\varphi\leq u \leq V_{\theta}$ and set $u_t:=\max(u,V_{\theta}-t)$. 
	Observe that $V_{\theta}-t \leq u_t \leq V_{\theta}$ hence
	$$
	V_{\theta}-1 \leq t^{-1}u_t +(1-t^{-1}) V_{\theta} \leq V_{\theta}.
	$$
	It follows that 
	for all $E \subset X$,
	\[
	\int_E \MA(u_t) \leq 
	t^n \int_E \MA(t^{-1}u_t +(1-t^{-1}) V_{\theta}) \leq t^n \captheta(E). 
	\] 
	On the other hand by the comparison principle we also have 
		\begin{eqnarray*}
		\int_{\{u \leq V_{\theta}-t\}} \MA(u) &\leq & 2^n \int_{\{\varphi \leq (u+V_{\theta}-t)/2\}}  \MA((u+V_{\theta}-t)/2)\\
		&\leq & 2^n \int_{\{\varphi \leq V_{\theta} -t/2\}} \MA(\varphi)\\
		&\leq & \frac{2^n}{|\chi(-t/2)|} \int_X |\chi(\varphi-V_{\theta})|\MA(\varphi). 
	\end{eqnarray*}
	Thus there is a constant $C>0$ depending on $E_{\chi}(\varphi)$ such that 
	\[
	\int_E \MA(u) \leq t^n \captheta(E) + \frac{C}{|\chi(-t/2)|}. 
	\]
We can choose $t= (\captheta(E))^{-1/(n+1)}$ and get the conclusion.  
\end{proof}

\subsection{Degenerate Monge-Amp\`ere equations}

Given a non pluripolar positive Radon measure $\mu$ on $X$, it is useful to consider the equations
\begin{equation}
	\MA(\varphi) =e^{\lambda \varphi} \mu, \label{eq: MAlambda} 
\end{equation}
where $\lambda\in \mathbb{R}$ is a constant. When $\lambda=0$ a normalization condition $\mu(X)=\vol(\theta)$ should be imposed. 
We will use the following result obtained by a variational method in \cite{BBGZ13}: 

\begin{thm}\label{thm: BBGZ13}
	Assume that $\lambda>0$. Then there exists a unique $\varphi\in \Ec^1(X,\theta)$ solving \eqref{eq: MAlambda}. 
\end{thm}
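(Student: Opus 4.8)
The plan is to prove Theorem \ref{thm: BBGZ13} by the variational method of \cite{BBGZ13}, realizing the solution of $\MA(\varphi)=e^{\lambda\varphi}\mu$ as the unique maximizer of a suitable energy functional. Fix $\lambda>0$ and, after rescaling, normalize so that I may work with the functional
\[
F(\varphi):=E(\varphi)-\frac{1}{\lambda}\int_X e^{\lambda\varphi}\,d\mu
\]
on the class $\Ec^1(X,\theta)$, where $E$ is the Monge-Amp\`ere energy recalled above. The Euler-Lagrange equation of $F$ is precisely \eqref{eq: MAlambda}: since $E$ has differential $dE_\varphi(\dot\varphi)=\int_X\dot\varphi\,\MA(\varphi)$ and the second term has differential $-\int_X\dot\varphi\,e^{\lambda\varphi}\,d\mu$ along affine paths, a critical point of $F$ satisfies $\MA(\varphi)=e^{\lambda\varphi}\mu$. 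Thus the strategy reduces to (i) showing $F$ is bounded above and attains its supremum on $\Ec^1(X,\theta)$, and (ii) showing the maximizer is a weak solution, before addressing uniqueness.

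First I would establish existence of a maximizer by the direct method. The key point is that $F$ is upper semicontinuous for the $L^1$-topology: $E$ is u.s.c. in $L^1$ (as recalled after the definition of $E$), and the exponential term is handled using that $\varphi\mapsto\int_X e^{\lambda\varphi}\,d\mu$ is continuous under decreasing limits and lower semicontinuous in general, so that $-\frac1\lambda\int_X e^{\lambda\varphi}\,d\mu$ contributes favorably. I would then show $F$ is coercive: since $E(\varphi)$ behaves like $(n+1)^{-1}\int_X(\varphi-V_\theta)\MA(V_\theta)$ to leading order and is concave, while the exponential penalizes large positive values of $\sup_X\varphi$, one obtains $F(\varphi)\to-\infty$ as the normalized energy $J(\varphi)\to+\infty$, where $J(\varphi)=\sup_X\varphi-\frac{1}{\vol(\theta)}E(\varphi)$ up to normalization. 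Compactness of sublevel sets of $J$ in $L^1$ (a standard fact for $\theta$-psh functions with uniformly bounded energy, together with the compactness of $\{\varphi\in\psh(X,\theta):\sup_X\varphi=0\}$ in $L^1$) then yields a maximizing sequence converging in $L^1$ to some $\varphi_0\in\Ec^1(X,\theta)$, which by u.s.c. is a maximizer.

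Next I would verify that the maximizer $\varphi_0$ solves the equation. The obstacle here, and the main technical heart of the argument, is differentiability of $E$ and the exponential term along the relevant variations, \emph{together with the regularity needed to apply the comparison principle and projection techniques}. The clean approach is to use the orthogonality/projection argument of \cite{BBGZ13}: for a test function $w$ one considers the perturbations $P_\theta(\varphi_0+tw)$ (the $\theta$-psh envelope), computes the one-sided derivative of $F$ at $t=0$ using that $dE$ at an envelope only sees the measure $\MA$ on the contact set, and concludes $\int_X w\,(\MA(\varphi_0)-e^{\lambda\varphi_0}\mu)=0$ for all $w$, hence equality of measures. I expect the delicate steps to be justifying the differentiation under the integral for the exponential term (controlled since $\varphi_0\in\Ec^1$ and $\mu$ is non-pluripolar) and the differentiability of $E$ along projected paths, both of which are available from \cite{BBGZ13}.

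Finally, for uniqueness I would invoke the strict monotonicity forced by $\lambda>0$. If $\varphi$ and $\varphi'$ both solve \eqref{eq: MAlambda} in $\Ec^1(X,\theta)$, the comparison principle in $\Ec(X,\theta)$ gives, on the set $\{\varphi<\varphi'\}$,
\[
\int_{\{\varphi<\varphi'\}} e^{\lambda\varphi'}\,d\mu=\int_{\{\varphi<\varphi'\}}\MA(\varphi')\leq\int_{\{\varphi<\varphi'\}}\MA(\varphi)=\int_{\{\varphi<\varphi'\}} e^{\lambda\varphi}\,d\mu,
\]
which forces $\mu(\{\varphi<\varphi'\})=0$ since $e^{\lambda\varphi'}>e^{\lambda\varphi}$ there; as $\mu$ is non-pluripolar and the equations then propagate the inequality, one deduces $\varphi\geq\varphi'$ $\mu$-a.e., and by symmetry $\varphi=\varphi'$. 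I expect existence, and specifically the Euler-Lagrange computation via envelopes, to be the main obstacle, while uniqueness is comparatively straightforward given the strict sign $\lambda>0$.
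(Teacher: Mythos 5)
Your overall route is the one the paper itself points to: Theorem \ref{thm: BBGZ13} is quoted from [BBGZ13], and your proposal reconstructs that variational proof (maximize $E(\varphi)-\frac{1}{\lambda}\int_X e^{\lambda\varphi}\,d\mu$ over $\Ec^1(X,\theta)$, Euler--Lagrange equation via the envelope/orthogonality argument, uniqueness via comparison). The Euler--Lagrange step and the uniqueness step are fine; the latter matches the paper's own toolkit (Propositions \ref{prop: domination principle} and \ref{prop: comparison principle exponential}). The gap is in the existence step: you assert that $\varphi\mapsto\int_X e^{\lambda\varphi}\,d\mu$ is ``lower semicontinuous in general'' for the $L^1$ topology, and this is exactly what would make $F$ upper semicontinuous so that the direct method runs. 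For an \emph{arbitrary} non-pluripolar Radon measure $\mu$ --- which is the generality of the statement, and the generality in which the paper actually uses it (e.g.\ for the measures ${\bf 1}_{X\setminus B}\,e^{-\max(\varphi,-j)}\MA(\varphi)+{\bf 1}_{B}\,\mu$ in the proof of Theorem \ref{thm: subsolution theorem}, or in Lemma \ref{lem: solve MA eq}, which are typically singular with respect to $dV$) --- that claim is unjustified, and it is precisely where the difficulty of the theorem lies.

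Here is why this step does not go through as stated. If $\varphi_j\to\varphi$ in $L^1(X,dV)$, one gets a.e.\ convergence along subsequences for Lebesgue measure, but \emph{not} $\mu$-a.e.\ convergence when $\mu$ is singular, so Fatou's lemma cannot be invoked. What does come for free is the \emph{opposite} inequality: setting $\psi_j:=\big(\sup_{k\geq j}\varphi_k\big)^*$, one has $\psi_j\downarrow\varphi$ and $\varphi_k\leq\psi_j$ for $k\geq j$, hence $\limsup_k\int_X e^{\lambda\varphi_k}d\mu\leq\lim_j\int_X e^{\lambda\psi_j}d\mu=\int_X e^{\lambda\varphi}d\mu$; that is, the exponential term is \emph{upper} semicontinuous, the wrong direction for maximization. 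So $F$ is the sum of an u.s.c.\ and an l.s.c.\ functional, and the maximizing-sequence argument stalls exactly at the point your write-up treats as routine (when $\mu=f\,dV$, Fatou does apply and your argument is correct). This is why the paper does not reprove the result but defers to [BBGZ13], where this compactness/semicontinuity issue is the technical heart and is handled by more than naive semicontinuity --- energy and capacity estimates along maximizing sequences, or approximation by better-behaved measures combined with the comparison principle and stability results of the type of Lemma \ref{lem: BEGZ prop 2.10}. Your appeal to [BBGZ13] covers the differentiability lemmas you flag as delicate, but not this step, which you present as standard; as written, the existence part of your proof has a genuine hole for the class of measures the theorem is about.
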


The main idea of the proof in \cite{BBGZ13} is to maximize the functional 
$$
\mathcal{F}_{\lambda}(\p)=E(\p)+\frac{1}{\lambda} \log \left (\int_X e^{\lambda \p} d\mu \right).
$$

The continuity and the coercivity of the functional $\mathcal{F}_{\lambda}$  is automatic when $\lambda>0$.  
When $\lambda=0$ the problem is more subtle and the resolution so far relies on  a regularity result of $V_{\theta}$ obtained in \cite{BD12} : one uses the fact that $\MA(V_{\theta})$ has bounded density with respect to Lebesgue measure. The latter is a direct consequence of our \ref{thmA} (see Proposition \ref{prop: weak BD12}). An alternative proof of Proposition \ref{prop: weak BD12} has been given recently in \cite{Ber13}, \cite{DDL16} using ideas from the viscosity theory in \cite{EGZ11}.


\smallskip

We will also need the {\it  domination principle} which, in the context of big classes,
 was first established in \cite{BEGZ10} for $\theta$-psh functions with minimal singularities. 
 The result still holds for functions in $\mathcal{E}(X,\theta)$ as follows from an argument due to Dinew
 (see \cite{BL12,DDL16}). 
 
\begin{pro}\cite{BEGZ10,DDL16}\label{prop: domination principle}
	Let $\varphi,\psi$ be $\theta$-psh functions on $X$ and assume that $\varphi\in \mathcal{E}(X,\theta)$. If $\MA(\varphi)(\varphi<\psi)=0$ then $\varphi\geq \psi$ on $X$. 
\end{pro}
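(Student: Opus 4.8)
The plan is to reduce to a comparison between two \emph{ordered} finite-energy potentials, and then to the classical minimal-singularities case established in \cite{BEGZ10}.

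First I would reduce the statement. Set $w:=\max(\varphi,\psi)$, a $\theta$-psh function with $w\ge\varphi$. Since $\varphi\in\mathcal E(X,\theta)$ carries full non-pluripolar mass $\vol(\theta)$ and $w\ge\varphi$, the monotonicity of the non-pluripolar Monge--Amp\`ere mass (\cite{DDL16}) gives $\int_X\MA(w)=\vol(\theta)$, i.e. $w\in\mathcal E(X,\theta)$ as well. As $\{\varphi<\psi\}=\{\varphi<w\}$ we still have $\MA(\varphi)(\{\varphi<w\})=0$, and the desired inequality $\varphi\ge\psi$ is equivalent to $\varphi\ge w$, hence (because $w\ge\varphi$) to $\varphi=w$. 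It therefore suffices to prove the following \emph{ordered} version: if $\varphi\le w$ with $\varphi,w\in\mathcal E(X,\theta)$ and $\MA(\varphi)(\{\varphi<w\})=0$, then $\varphi=w$.

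For this reduced statement I would first apply the comparison principle in $\mathcal E(X,\theta)$ to $\varphi\le w$. The set $U:=\{\varphi<w\}$ is plurifine open and $w=\varphi$ on $X\setminus U$, so $\int_U\MA(w)\le\int_U\MA(\varphi)=0$; thus \emph{both} $\MA(\varphi)$ and $\MA(w)$ vanish on $U$ and are concentrated on the contact set $\{\varphi=w\}$. What remains is a maximum principle: a full-mass potential $\varphi$ that is maximal on $U$ and agrees with the larger potential $w$ off $U$ cannot be strictly smaller than $w$ on a set of positive capacity. I would prove this by truncation to the minimal-singularities regime, where it is exactly the domination principle of \cite{BEGZ10}: set $\varphi_k:=\max(\varphi,V_{\theta}-k)$ and $w_k:=\max(w,V_{\theta}-k)$, which have minimal singularities and satisfy $\varphi_k\le w_k$, run the minimal-singularities comparison argument on the pair $(\varphi_k,w_k)$, and let $k\to+\infty$ with $\varphi_k\downarrow\varphi$, $w_k\downarrow w$. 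This yields $\varphi\ge w$ outside a pluripolar set, hence everywhere by upper semicontinuity of $w$.

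The hard part is the passage to the limit. On the truncation region $\{\varphi<V_{\theta}-k\}$ the potential $\varphi_k$ coincides with $V_{\theta}-k$, so the truncated hypothesis differs from the exact one by the mass $\MA(V_{\theta})(\{\varphi<V_{\theta}-k\})$; for a potential that is merely in $\mathcal E(X,\theta)$, rather than of minimal singularities, non-pluripolar mass can a priori escape to the polar set $\{\varphi=-\infty\}$. The full-mass hypothesis $\varphi\in\mathcal E(X,\theta)$ is exactly what prevents this: since the non-pluripolar measure $\MA(V_{\theta})$ does not charge pluripolar sets, $\MA(V_{\theta})(\{\varphi<V_{\theta}-k\})\to\MA(V_{\theta})(\{\varphi=-\infty\})=0$, so the truncation error is negligible in the limit. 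Making this control quantitative is precisely the content of the approximation argument of \cite{BL12,DDL16}.
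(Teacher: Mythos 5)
First, a framing remark: the paper does not prove Proposition \ref{prop: domination principle} itself. It quotes the minimal-singularities case from \cite{BEGZ10} and says the extension to $\mathcal{E}(X,\theta)$ ``follows from an argument due to Dinew (see \cite{BL12,DDL16})''. So your proposal has to stand on its own, with Dinew's argument as the benchmark. Your reductions are correct and standard: $w:=\max(\varphi,\psi)$ lies in $\mathcal{E}(X,\theta)$ by monotonicity of non-pluripolar masses, the hypothesis and conclusion transfer to the ordered pair $\varphi\le w$, and the comparison principle in $\mathcal{E}(X,\theta)$ gives in addition that $\MA(w)$ vanishes on $U=\{\varphi<w\}$. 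Your identification of where full mass enters the truncation error, and of why that error tends to zero, is also essentially right.

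The gap is the decisive step, which is never actually carried out. The domination principle of \cite{BEGZ10} is \emph{qualitative}: it requires $\MA(\varphi_k)\bigl(\{\varphi_k<w_k\}\bigr)=0$ exactly, and this fails at every finite $k$. Indeed, by plurifine locality $\MA(\varphi_k)$ coincides with $\MA(V_{\theta})$ on $\{\varphi<V_{\theta}-k\}$, and nothing forces $\MA(V_{\theta})$ to vanish on $\{\varphi<V_{\theta}-k\}\cap\{w>V_{\theta}-k\}$; all one gets is $\MA(\varphi_k)\bigl(\{\varphi_k<w_k\}\bigr)\le \MA(\varphi)\bigl(\{\varphi\le V_{\theta}-k\}\bigr)=:\epsilon_k\to 0$. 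So you cannot ``run'' \cite{BEGZ10} on the pair $(\varphi_k,w_k)$; you would need a \emph{quantitative} domination principle, and here the truncation scheme breaks for a structural reason. Any such quantitative statement is obtained by comparing $\varphi_k$ with mixtures $(1-s)w_k+s\rho$ where the auxiliary potential $\rho$ must lie below $w_k$; since $w_k$ reaches down to $V_{\theta}-k$, the truncation forces $\rho$ to be essentially $V_{\theta}-k$, and the resulting estimate has the shape $\MA(V_{\theta})\bigl(\{\varphi_k<w_k-\tau\}\bigr)\le C\,\epsilon_k\,(k/\tau)^n$. For a general $\varphi\in\mathcal{E}(X,\theta)$ (as opposed to $\mathcal{E}^p$ with $p$ large) the tails $\epsilon_k$ tend to $0$ with no rate whatsoever, so $\epsilon_k k^n$ need not tend to $0$: the truncation depth and the accuracy of the truncated hypothesis cannot be reconciled, and letting $k\to+\infty$ yields nothing. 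This is exactly why Dinew's argument --- the one your final sentence delegates to \cite{BL12,DDL16} --- is structurally different: it never truncates $\varphi$. It works directly in $\mathcal{E}(X,\theta)$, where the comparison principle is available, comparing $\varphi$ with $(1-s)w+s\rho$ for an auxiliary $\rho\in\mathcal{E}(X,\theta)$ \emph{adapted to $w$}, produced by the existence theorem of \cite{BEGZ10,BBGZ13} (solving $\MA(\rho)=c\,\nu$ with $\nu$ a non-pluripolar measure charging a non-pluripolar compact subset of $\{\varphi<w\}$); the hypothesis then kills the right-hand side of the comparison principle exactly, one concludes $\MA(\rho)\bigl(\{\varphi<w\}\bigr)=0$, hence $\{\varphi<w\}$ is pluripolar, hence empty by the sub-mean value inequality (that last upgrade, which you state, is fine). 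So citing ``the approximation argument of \cite{BL12,DDL16}'' does not fill the hole: what those references contain is this different argument, not a quantitative control of truncation errors, and the mechanism you sketch in its place is one that would fail.
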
 
 
We will occasionally use the following version of the comparison principle, which follows from the domination principle (see  \cite{DDL16}):  

\begin{pro}\label{prop: comparison principle exponential}
	Let $\varphi,\psi \in \Ec(X,\theta)$, $\mu$ be a non-pluripolar positive measure and   $f$ be  a Borel measurable function  on $X$ such that 
	\[\MA(\varphi) \geq e^{\beta \varphi} e^{-f} \mu \ ; \ \MA(\psi) \leq e^{\beta \psi} e^{-f} \mu,\]
	where $\beta>0$ is a constant. Then $\varphi\leq \psi$. 
\end{pro}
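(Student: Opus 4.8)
The plan is to localise on the set $D:=\{\psi<\varphi\}$ and show that it is negligible for $\MA(\psi)$, so that the domination principle (Proposition~\ref{prop: domination principle}), applied to the finite-energy potential $\psi$, forces $\psi\geq\varphi$, which is exactly the desired conclusion $\varphi\leq\psi$.

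First I would apply the comparison principle for potentials in $\Ec(X,\theta)$ (available from \cite{DDL16}, itself a consequence of the domination principle) to the pair $u=\psi$, $v=\varphi$, obtaining
\[
\int_{D}\MA(\varphi)\leq \int_{D}\MA(\psi).
\]
I would then feed in the two hypotheses, using $\MA(\varphi)\geq e^{\beta\varphi}e^{-f}\mu$ on the left and $\MA(\psi)\leq e^{\beta\psi}e^{-f}\mu$ on the right, to get the chain
\[
\int_{D} e^{\beta\varphi}e^{-f}\,d\mu \leq \int_{D}\MA(\varphi)\leq \int_{D}\MA(\psi)\leq \int_{D} e^{\beta\psi}e^{-f}\,d\mu .
\]
Since $\varphi,\psi\in\Ec(X,\theta)$ the Monge-Amp\`ere masses are finite (equal to $\vol(\theta)$), so all these integrals are finite and the resulting subtraction is legitimate.

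The decisive point is the strict monotonicity of $t\mapsto e^{\beta t}$ for $\beta>0$: on $D$ we have $\varphi>\psi$, hence $e^{\beta\varphi}-e^{\beta\psi}>0$ there, while $e^{-f}\mu$ is a non-negative measure. The displayed chain yields $\int_{D}(e^{\beta\varphi}-e^{\beta\psi})\,d(e^{-f}\mu)\leq 0$ with a strictly positive integrand against a positive measure, which forces $(e^{-f}\mu)(D)=0$. Consequently $\MA(\psi)(D)\leq \int_{D}e^{\beta\psi}\,d(e^{-f}\mu)=0$, and the domination principle applied to $\psi$ finishes the argument.

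The main obstacle is twofold: ensuring that the comparison principle is genuinely at our disposal for arbitrary $\Ec(X,\theta)$ potentials (not only those with minimal singularities), which is precisely where the Dinew-type argument behind Proposition~\ref{prop: domination principle} enters; and handling the measure-theoretic degeneracies of $f$ (the weight $e^{-f}$ may vanish where $f=+\infty$ and blow up where $f=-\infty$). Both are dealt with by working throughout with the single positive measure $e^{-f}\mu$ and only claiming that it charges no subset of $D$, which is exactly what the monotonicity estimate provides.
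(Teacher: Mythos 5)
Your proof is correct and coincides with the argument the paper has in mind: the paper gives no proof of its own, noting only that the statement follows from the domination principle (citing \cite{DDL16}), and the standard derivation there is precisely yours. Namely, the comparison principle in $\Ec(X,\theta)$ applied on $\{\psi<\varphi\}$, combined with the strict monotonicity of $t\mapsto e^{\beta t}$, forces $(e^{-f}\mu)(\{\psi<\varphi\})=0$, hence $\MA(\psi)(\{\psi<\varphi\})=0$, and Proposition~\ref{prop: domination principle} applied to $\psi$ concludes.
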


\section{Envelopes} \label{sec:envelope}

Upper envelopes are classical objects in Potential Theory.
They were used  in the Perron method for solving the Dirichlet problem for the Poisson equation as well 
as the free boundary problems for the Laplace operator.

Upper envelopes of psh functions were considered   by Bremermann, Walsh, Siciak,  
and Bedford and Taylor to solve the Dirichlet problem for the complex Monge-Amp\`ere equation in strictly pseudo-convex domains
(see \cite{BT76}).

We consider here envelopes of quasi-psh functions on compact K\"ahler manifolds, following
\cite{GZ05,  Ber13}.

\subsection{Usual envelopes}

Let $X$ be a compact K\"ahler manifold of complex dimension $n$ and let $\theta$ be a closed smooth real $(1,1)$-form on $X$ whose cohomology class is big.

\begin{defi} \label{def:usual}
Given a Lebesgue measurable function $h:X \rightarrow \R$  which is bounded from below, we define the $\theta$-psh envelope of $h$ as follows
$$
P_{\theta}(h) := \left(\sup \{ u \in \psh (X,\theta) ; u \leq h  \, \, \text{in} \, \, X\}\right)^*,
$$
where the star means that we take the upper semi-continuous regularization. 
\end{defi}

We will also denote by $P(h)$ the envelope $P_{\theta}(h)$ if no confusion can arrive. 
In this section we start a systematic study of these envelopes.

\smallskip

When $h = - {\bf 1}_E$ is the negative characteristic function of a subset $E$ then $P(h) = h_E^*$ is the so called relative extremal function of $E$ \cite{GZ05}. 

When $h = 0$ then $P(0) = V_{\theta}$ was introduced in \cite{DPS01} as an example of a $\theta$-psh function with minimal singularities.
When $h$ is smooth Berman and Demailly have shown in \cite{BD12} that $P(h)$ has locally bounded laplacian in $\Amp(\theta)$. 
In particular $V_{\theta}=P(0)$ has locally bounded laplacian 
 and the Monge-Amp\`ere measure of $V_{\theta}$ can be described as 
\[
\MA(V_{\theta}) = {\bf 1}_{\{ V_{\theta}=0\} }\theta^n.
\]
In the case when the class $[\theta]$ is big and nef, a PDE proof of this result was given by Berman in \cite{Ber13}. The fundamental observation of Berman is that the envelope can be obtained as the limit of solutions to a one-parameter family of complex Monge-Amp\`ere equations (this idea has been recently used in \cite{LN15}, \cite{KN16}, \cite{BL16}).  By establishing a uniform laplacian estimate for  this family of solutions  Berman showed that $P_{\theta}(h)$ has locally bounded laplacian in the ample locus. 

In particular, when the cohomology class of $\theta$ is K\"ahler then $P_{\theta}(h)$ has bounded laplacian on $X$.
The optimal $\mathcal{C}^{1,1}$ regularity  of $P_{\theta}(h)$, conjectured by Berman,  has recently been confirmed by Tosatti \cite{Tos17} using the $\mathcal{C}^{1,1}$ estimate in \cite{CTW16} and the convergence method of Berman \cite{Ber13}.  

\smallskip

We need here to study these envelopes for functions $h$ that are less regular. 
When $h=0$ on $E$ and $+\infty$ on $X\setminus E$ then $P(h)$ is the global extremal $\theta$-psh function of $E$ that was considered in \cite{GZ05,BEGZ10}.  It follows from \cite{GZ05, BEGZ10} that if $h$ is finite on a non-pluripolar set then $P (h)$ is a well-defined $\theta$-psh function on $X$.

\begin{pro}\label{prop: basic property of envelope}
\text{ }

1. If $h:X \rightarrow \R$ is a bounded measurable function then $P(h) $ is a $\theta$-psh function with minimal singularities
which satisfies  $P(h) \leq h$ quasi everywhere in $X$. Moreover
$$
P (h) := \sup \{ u \in PSH (X,\theta) :  u \leq h , \, \, \text{quasi everywhere in} \, \, X\}.
$$

\smallskip
 
 2. If $(h_j)$ is a decreasing sequence of bounded measurable functions   which converge pointwise to $h$ in $X$,
 then $P(h_j)$ decreases to $P( h)$ in $X$.
 
 \smallskip
 
 3. If $(h_j)$ is an increasing sequence of bounded quasi-lsc functions  converging pointwise to $h$,
 then $P (h_j)$ increases to $P(h)$  quasi everywhere.
 
\smallskip 
 
 4. If $h$ is continuous and $\theta$ is K\"ahler then $P_\theta (h)$ is continuous in $X$.
\end{pro}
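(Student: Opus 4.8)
\emph{Plan.} I would treat the four assertions in turn, the third being the only delicate one.

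For (1), since $m\le h\le M$ for constants $m,M$, the function $V_{\theta}+m$ lies in the defining family (it is $\theta$-psh and $\le m\le h$), so the family is non-empty, while every competitor $u\le h\le M$ satisfies $u-M\le 0$, hence $u\le V_{\theta}+M$; taking the upper regularization gives
\[
V_{\theta}+m\le P(h)\le V_{\theta}+M,
\]
so $P(h)$ is a genuine $\theta$-psh function with minimal singularities. The inequality $P(h)\le h$ quasi-everywhere follows because the usc regularization alters $\sup\{u:u\le h\}\le h$ only on a pluripolar (negligible) set. For the last identity, $P(h)$ itself belongs to the family $\{u\in\psh(X,\theta):u\le h\ \text{q.e.}\}$, so the right-hand supremum $Q$ satisfies $Q\ge P(h)$; conversely, given $u\le h$ q.e., I would perturb by a $\theta$-psh function $\psi\le 0$ whose $-\infty$-locus contains the pluripolar set $\{u>h\}$ (such $\psi$ exists since $[\theta]$ is big) and check that $\tfrac1{1+\varepsilon}(u+\varepsilon\psi)\le h$ everywhere while it increases to $u$ q.e.\ as $\varepsilon\downarrow0$ (after normalizing $h\ge0$). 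This shows $u\le P(h)$ q.e., and since for $\theta$-psh functions an inequality holding quasi-everywhere also holds Lebesgue-a.e.\ and hence everywhere (by the sub-mean value property), we get $Q\le P(h)$, so $Q=P(h)$, already usc.

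Assertion (2) is then formal: $P(h_j)$ decreases (monotonicity of $P$) to some $P_\infty\ge P(h)$, while by (1) $P_\infty\le P(h_j)\le h_j$ q.e.\ for each $j$, whence $P_\infty\le\inf_jh_j=h$ q.e.; the characterization in (1) gives $P_\infty\le P(h)$, so $P_\infty=P(h)$. For (4) I would use that $P$ is $1$-Lipschitz for the sup norm, $\|P(h)-P(g)\|_\infty\le\|h-g\|_\infty$ (from monotonicity together with $P(h+c)=P(h)+c$); approximating a continuous $h$ uniformly by smooth functions $h_k$, the envelopes $P(h_k)$ are continuous on $X$ by the Berman--Demailly bounded-Laplacian estimate for $\theta$ Kähler, and $P(h)$ is their uniform limit, hence continuous.

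The heart of the matter is (3). Writing $P_\infty:=(\sup_jP(h_j))^*$, monotonicity and (1) give $P_\infty\le P(h)$, and by the characterization in (1) it suffices to prove that every $u\in\psh(X,\theta)$ with $u\le h$ satisfies $u\le P_\infty$ q.e. Fixing $\varepsilon>0$, the closed sets $\{u-\varepsilon\ge h_j\}$ decrease to $\emptyset$; using that $u$ is quasi-continuous and each $h_j$ quasi-lsc, I would pass to a good set $X\setminus U$ (with $\captheta(U)$ as small as desired, $U$ open) on which $u$ is continuous and all $h_j$ are lsc, and invoke the continuity of the Monge--Amp\`ere capacity along decreasing sequences of closed sets (Dini) to obtain, for $j$ large, $u-\varepsilon\le h_j$ on $X\setminus U$. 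The main obstacle is to upgrade this ``inequality off a set of small capacity'' into a genuine $\theta$-psh competitor lying below $h_j$ \emph{quasi-everywhere}: the set $U$ is not pluripolar, so one cannot simply add a potential that is $-\infty$ on $U$. I would instead form
\[
w:=\tfrac1{1+t}\bigl(u-\varepsilon+t\,P(-T\mathbf 1_{U})\bigr),
\]
which is $\theta$-psh, and choose the truncation level $T$ and the coefficient $t$ (with $Tt$ large enough to push $w$ below $h_j$ on $U$, after normalizing $h_j\ge0$) so that $w\le h_j$ holds quasi-everywhere, hence $w\le P(h_j)\le P_\infty$. Letting $\captheta(U)\to0$ fast and $t\to0$, and using that the sublevel sets of the relative extremal functions $P(-T\mathbf 1_U)$ have capacity tending to $0$ with $\captheta(U)$, I would conclude $w\to u-\varepsilon$ in capacity and therefore $u-\varepsilon\le P_\infty$ q.e.; letting $\varepsilon\downarrow0$ finishes the proof. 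Controlling this competitor---quantifying how much the envelope drops when the obstacle is lowered on a set of small capacity---is the step I expect to require the most care.
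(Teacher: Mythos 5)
Your treatment of assertions 1, 2 and 4 is correct and essentially the paper's own argument: for part 1 you perturb a quasi-everywhere competitor by a $\theta$-psh function equal to $-\infty$ on the exceptional pluripolar set (the paper phrases this as $P(h_1)=P(h_2)$ whenever $\{h_1\neq h_2\}$ is pluripolar, using the combination $\lambda\phi+(1-\lambda)P(h_1)$, which is the same device); part 2 is the same formal monotonicity argument; part 4 is the same uniform-approximation argument, resting on the regularity of envelopes of smooth obstacles in the K\"ahler case.

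For assertion 3 you take a genuinely different route, and it has a gap exactly at its crux. The paper's proof is soft: by the balayage result (Lemma \ref{lem: ort}), $\MA(P(h_j))$ vanishes on $\{P(h_j)<h_j\}$; since $P(h_j)\leq \varphi:=\lim_k P(h_k)$ and $h_j\nearrow h$, continuity of the Monge-Amp\`ere operator along monotone sequences gives that $\MA(\varphi)$ vanishes on $\{\varphi<P(h)\}$, and the domination principle (Proposition \ref{prop: domination principle}) then forces $\varphi=P(h)$; no quantitative capacity estimate is needed. Your construction instead hinges on the claim that $w\to u-\varepsilon$ in capacity, i.e. that $t\,P_{\theta}(-T\mathbf{1}_U)\to 0$ in capacity when $\captheta(U)\to 0$ fast and $t\to 0$, with $tT\geq M$. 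This is asserted, not proved, and it is where all the difficulty of assertion 3 sits. Note the scaling: both the depth $T=M/t$ of the obstacle and the relevant sublevel threshold $\delta/t$ blow up like $1/t$, so what you actually need is: for each fixed $t$,
\[
\captheta\bigl(\bigl\{P_{\theta}\bigl(-(M/t)\mathbf{1}_U\bigr)<-\delta/t\bigr\}\bigr)\longrightarrow 0
\quad\text{as } \captheta(U)\to 0 .
\]
This statement is true, but proving it requires non-soft input. In the K\"ahler case one can argue as follows: when $M_U:=\sup_X V_U^*\geq T$ (with $V_U^*$ the global extremal function of $U$), the function $\frac{T}{M_U}V_U^*-T$ is a competitor, so $P_{\theta}(-T\mathbf{1}_U)\geq \frac{T}{M_U}V_U^*-T$ and hence $\{P_{\theta}(-T\mathbf{1}_U)<-\eta T\}\subset\{V_U^*<(1-\eta)M_U\}$; one then combines the uniform $L^1$ bound for sup-normalized $\theta$-psh functions with the estimate $\captheta(\psi<-s)\leq C/s$ of \cite{GZ05} and an Alexander--Taylor type comparison guaranteeing $M_U\to+\infty$ as $\captheta(U)\to 0$. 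None of this appears in your proposal (you flag the step yourself), and in the big cohomology setting where the proposition is stated these comparisons additionally require the machinery of \cite{BEGZ10} and Theorem \ref{thm: DNL big}, since the scaling tricks above no longer preserve the class $[\theta]$ naively. So your architecture is salvageable, but as written the heart of assertion 3 is missing; the paper's balayage-plus-domination argument is both shorter and bypasses these quantitative issues entirely.
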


In these statements quasi everywhere means outside a pluripolar set.
\begin{proof}
Assume that $h$ is a bounded Lebesgue measurable function on $X$. The fact that $P(h)$ is a $\theta$-psh function with minimal singularities follows directly from the definition.  The set $\{x\in X : P(h)(x) > h(x)\}$ is negligible. It follows from a classical result in pluripotential theory \cite{BT82} that negligible sets are pluripolar, thus $P(h)\leq h$ quasi every where on $X$. 

We now prove the identity in the first statement. Let $\varphi$ denote the function on the right-hand side. It is obvious that $P(h) \leq \varphi$. As a countable union of pluripolar sets is also pluripolar, by Choquet's lemma and the same argument as above we see that $\varphi\leq h$ quasi everywhere on $X$. The equality follows if we can show that $P(h_1)=P(h_2)$ for two bounded functions such that $\{h_1\neq h_2\}$ is pluripolar. Indeed, the set
\[
E := \{P(h_1) >h_1\} \cup \{h_1\neq h_2\}
\]
is also pluripolar.  Hence there exists $\phi \in \psh(X,\theta)$ such that $\phi=-\infty$ on $E$. 
Now for any $\lambda\in (0,1)$  the function $\lambda \phi + (1-\lambda)P(h_1)$ is $\theta$-psh on $X$ and 
bounded from above by  $h_2$. Letting $\lambda \to 0$ one sees that $P(h_1)\leq P(h_2)$ off a pluripolar set, hence the inequality holds everywhere. Conversely one can show that $P(h_2)\leq P(h_1)$, completing the proof of the claim, hence the first statement is proved.

The second statement is straightforward. We now prove the third one. Assume that $(h_j)$ is a sequence of bounded quasi-lsc functions that increase pointwise to a bounded function $h$. Then $P(h_j)$ also increase quasi everywhere to some $\varphi\in \psh(X,\theta)$ with minimal singularities. One observes immediately that $\varphi\leq P(h)$. 
It follows from  Lemma \ref{lem: ort} that $\MA(P(h_j))$ vanishes in $\{P(h_j)<h_j\}$. As $P(h_j)\leq \varphi$ and $h_j\nearrow h$ it follows that $\MA(\varphi)$ also vanishes in $\{\varphi<P(h)\}$. 
The domination principle (Proposition \ref{prop: domination principle})
insures that $\varphi=P(h)$.  

One can prove the last statement by approximation. Let $(h_j)$ be a sequence of smooth functions on $X$ converging uniformly to $h$. It follows from \cite{Ber13} that $P(h_j)$ has bounded laplacian, in particular it is continuous. As $P(h_j)$ uniformly  converges to $P(h)$ the conclusion follows. 
\end{proof}

 The following result is an analogue of the corresponding result of Bedford and Taylor \cite[Corollary 9.2]{BT82} :

\begin{lem} \label{lem: ort} 
Let $h:X \rightarrow \R$ be a bounded Lebesgue measurable function on $X$ and let $L (h)$ be the lower semi-continuity set of $h$.
Then $\MA(P(h))$ puts no mass on the set $L (h) \cap \{P(h) < h\}$.
In particular if  $\capo^* (X \setminus L (h)) = 0$, then
$$
\int_X (P(h) - h) \MA(P(h)) = 0.
$$
\end{lem}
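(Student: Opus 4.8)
The statement splits into a local assertion---that $\MA(P(h))$ gives no mass to $L(h)\cap\{P(h)<h\}$---and a global consequence. Write $u:=P(h)$, which by Proposition \ref{prop: basic property of envelope} is $\theta$-psh with minimal singularities, locally bounded on the ample locus $\Amp(\{\theta\})$, and satisfies $u\le h$ quasi-everywhere. Since the non-pluripolar measure $\MA(u)$ charges no pluripolar set and $X\setminus\Amp(\{\theta\})$ is pluripolar, it suffices to prove that $\MA(u)$ vanishes on $G:=L(h)\cap\{u<h\}\cap\Amp(\{\theta\})$. The plan is to show that every point of $G$ has a coordinate ball $B\Subset\Amp(\{\theta\})$ with $\MA(u)(B)=0$; the balls $\{B\}$ then form an open cover of $G$, and extracting a countable subcover (second countability of $X$) together with countable subadditivity gives $\MA(u)(G)=0$.

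So fix $x_0\in G$ and set $\delta:=h(x_0)-u(x_0)>0$. The key point is that there is \emph{room} under $h$ near $x_0$: since $u$ is upper semicontinuous and $h$ is lower semicontinuous at $x_0$, I can first choose a small coordinate ball $B=B(x_0,r)\Subset\Amp(\{\theta\})$ on which $\sup_B u<u(x_0)+\delta/4$ while $\inf_B h>u(x_0)+3\delta/4$; writing $\theta=dd^c g$ for a smooth local potential $g$ near $\bar B$, I then shrink $r$ so that the oscillation of $g$ on $B$ is $<\delta/4$. With these choices $\sup_B u+\operatorname{osc}_B g<\inf_B h$.

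Next I would push $u$ up inside $B$ by balayage. Let $w$ be the upper semicontinuous regularization of $\sup\{v\in\psh(B,\theta):\limsup_{\partial B}v\le u\}$. Then $w\ge u$ on $B$ (as $u$ itself is a competitor), $\MA(w)=0$ on $B$ (maximality), and the maximum principle applied to $w+g$ gives $\sup_B w\le\sup_{\partial B}u+\operatorname{osc}_B g<\inf_B h$, so $w<h$ throughout $B$. Gluing, the function $\hat w$ equal to $w$ on $B$ and to $u$ on $X\setminus B$ is $\theta$-psh on $X$, since $w\ge u$ and the boundary values of $w$ are dominated by $u$. By construction $\hat w\ge u$ and $\hat w\le h$ quasi-everywhere (strictly below $h$ on $B$, equal to $u\le h$ quasi-everywhere outside), so the characterization of the envelope in Proposition \ref{prop: basic property of envelope} forces $\hat w\le P(h)=u$. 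Hence $\hat w=u$, i.e. $w=u$ on $B$, and therefore $\MA(u)(B)=\MA(w)(B)=0$, proving the local claim and with it the first assertion. The one delicate point here is the \emph{overshoot problem}: a priori the maximal extension $w$ could rise above $h$ and disqualify $\hat w$ as a competitor, and it is precisely the quantitative room $\delta$---together with the ball being small enough to tame both the jump of $u$ (upper semicontinuity) and the oscillation of $g$---that rules this out; the gluing of $\hat w$ across $\partial B$ with merely upper semicontinuous boundary data is classical Bedford--Taylor theory.

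Finally, for the ``in particular'' statement, the hypothesis $\capo^*(X\setminus L(h))=0$ means $X\setminus L(h)$ is pluripolar, so $\MA(u)$ ignores it. As $u$ has minimal singularities it lies in $\Ec^1(X,\theta)$ and $h$ is bounded, so the integrand $u-h$ is $\MA(u)$-integrable and, since $u\le h$ quasi-everywhere while $\MA(u)$ charges no pluripolar set, it is $\le0$ almost everywhere. Splitting $X$ into $\{u=h\}$ (integrand $0$), $L(h)\cap\{u<h\}$ (no mass, by the first part), its complement in $\{u<h\}$ (contained in the pluripolar $X\setminus L(h)$), and the pluripolar set $\{u>h\}$, one sees that all the mass of $\MA(u)$ sits on $\{u=h\}$, which gives $\int_X(P(h)-h)\MA(P(h))=0$.
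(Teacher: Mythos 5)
Your proof is correct and follows, in essence, the same balayage strategy as the paper: localize at a point of $L(h)\cap\{P(h)<h\}$, use lower semicontinuity of $h$ and upper semicontinuity of $P(h)$, together with control of the oscillation of a local potential of $\theta$, to create quantitative room under $h$ on a small ball $B$; modify $P(h)$ inside $B$ into a maximal $\theta$-psh function; check the modification stays $\leq h$; and conclude from the characterization of the envelope in Proposition \ref{prop: basic property of envelope} that the modification equals $P(h)$, whence $\MA(P(h))(B)=0$. Your explicit restriction to the ample locus and the Lindel\"of covering argument are points the paper leaves implicit, and they are welcome.

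The one real difference is how the balayage is implemented, and it is where your write-up has a soft spot. The paper invokes \cite[Proposition 9.1]{BT82}, which produces a psh function $v\geq P(h)+\rho$ on a neighborhood $D$ of $\bar B$ with $v=P(h)+\rho$ on $D\setminus B$ and $(dd^c v)^n=0$ in $B$; since the modification literally agrees with the original outside $B$, the glued function is automatically $\theta$-psh and no boundary analysis is needed. You instead take the Perron--Bremermann envelope $w$ of the family $\{v\in\psh(B,\theta):\limsup_{\partial B}v\leq u\}$, and your gluing then requires $\limsup_{z\to\zeta}w(z)\leq u(\zeta)$ for every $\zeta\in\partial B$. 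This is exactly what you assert (``the boundary values of $w$ are dominated by $u$'') but do not prove, and it does not follow from the definition: the usc regularization of a supremum can a priori jump up at the boundary. Relatedly, applying the maximum principle directly to $w+g$ is circular as stated, since bounding $\sup_B(w+g)$ by boundary data already presupposes the boundary behavior of $w$. Both points are repaired by one classical observation: each competitor $v+g$ is subharmonic on $B$ with boundary limsup at most the usc function $(u+g)|_{\partial B}$, hence dominated on $B$ by the Poisson (harmonic) extension $H$ of that boundary function; therefore $w+g\leq H$, and since $\limsup_{z\to\zeta}H(z)\leq (u+g)(\zeta)$ for usc data (approximate from above by continuous data), you obtain both the boundary inequality needed for the gluing and the bound $\sup_B w\leq \sup_{\partial B}u+{\rm osc}_{\bar B}\, g$ without circularity. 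Alternatively, simply quote \cite[Proposition 9.1]{BT82} as the paper does and dispense with the envelope construction altogether.
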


\begin{proof} 
The proof proceeds as in the classical case using a balayage argument (see (\cite[Corollary 9.2]{BT82}).
We repeat it here for the convenience of the reader.  For notational convenience we set $\hat{h}:=P(h)$. Fix a point $x_0 \in L (h) \cap \{\hat h < h\}$.  Observe that by lower semi-continuity at $x_0$, the point $x_0$ lies in the interior of the set $\{\hat h < h\}$. Indeed, fix $\delta > 0$ such that $\hat h (x_0) - h (x_0) \leq - 2 \delta$. By upper semi-continuity of $\hat h$ and lower semi-continuity of $h$ at $x_0$ there exists a small ball $B$ of center $x_0$ such that 
$\max_{\bar B} \hat h  < \min_{\bar{B}}  h  - \delta$. 

Let $\rho$ be a smooth  local potential of $\theta$ in a neighbourhood $D$ of $\bar B$. Shrinking the ball if necessary we can assume that   ${\rm osc}_{\bar B} \rho < \delta$.
Then $u := \hat h + \rho$ is psh in $D$. By Bedford and Taylor (see \cite[Proposition 9.1]{BT82}) there exists a psh function $v$ in $D$ such that $v = u $ in $D \setminus B$,
$v \geq u$ in $D$ and $(dd^c v)^n  = 0$ in $B$.

Since $v = u$ in $\partial B$, the comparison principle insures $ v \geq u$ in $\bar B$. On the other hand on $\partial B$, we have 
$$v = \hat h + \rho  \leq  \max_{\bar B} \hat{h} + \max_{\bar B}\rho. $$
By the classical maximum principle we get $v \leq \max_{\bar B} \hat{h} + \max_{\bar B}\rho $ in $\bar B$, hence 
$v - \rho \leq h - \delta + {\rm osc}_{\bar B} \rho  \leq h$ in $B$.

Therefore since $v - \rho = \hat h$ in $\partial B$, the function $w$ defined by $w := v - \rho$ in $B$ and $w = \hat h$ in $X \setminus B$ is $\theta$-psh in $X$ and satisfies $w \leq h$ in $B$ and $w = \hat h \leq h$ quasi everywhere in $X \setminus B$.
This yields $ w \leq \hat h$ in $X$.

By construction we have $w := v - \rho \geq u - \rho = \hat h$ in $B$ and then $w = \hat h$ in $B$ and  $(\theta + dd^c \hat h)^n = (dd^c v)^n = 0$ in $B$.
\end{proof}

The result above extends to any function $h$ which is quasi lower semi-continuous in the sense that for any $\delta > 0$ there exists a compact set $K \subset X$ such that  $\capo (X \setminus K) \leq \delta$ and the restriction $h|K$ is a lower semi-continuous function.

We need the following fact which follows  from the Tietze-Urysohn lemma.

\begin{lem} 
Let  $h$ be a quasi lower semi-continuous function in $X$. Then there exists a decreasing  sequence $(h_j)$ of lower semi-continuous functions in $X$ which converges to $h$  in capacity and quasi everywhere in $X$.
\end{lem}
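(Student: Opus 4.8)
The plan is to reduce everything to the definition of quasi lower semi-continuity and then to produce the required sequence as a running minimum of suitable genuine lower semi-continuous extensions. First I would normalize: after an affine change preserving both lower semi-continuity and the capacity estimates, I may assume $0\leq h\leq 1$. By the definition of quasi lower semi-continuity, for each $m$ I can choose a compact set $K_m\subset X$ with $\capo(X\setminus K_m)\leq 2^{-m}$ such that the restriction $h|_{K_m}$ is lower semi-continuous.

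The key step is to extend $h|_{K_m}$ to a globally lower semi-continuous function $g_m$ on $X$ which moreover dominates $h$ everywhere. This is where the Tietze--Urysohn mechanism enters, and the cleanest choice is to set
\[
g_m := h \ \text{ on } K_m, \qquad g_m := 1 \ \text{ on } X\setminus K_m.
\]
I would then check that $g_m$ is lower semi-continuous on all of $X$: on the open set $X\setminus K_m$ it is locally constant, while at a point $x\in K_m$ the lower semi-continuity of $h|_{K_m}$ together with the bound $h\leq 1$ yields $\liminf_{y\to x} g_m(y)\geq h(x)=g_m(x)$, the boundary points $x\in\partial K_m$ being handled by splitting approaching sequences into their parts inside and outside $K_m$. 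By construction $g_m\geq h$ everywhere and $g_m=h$ on $K_m$.

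Next I would set $h_j:=\min(g_1,\dots,g_j)$. A finite minimum of lower semi-continuous functions is lower semi-continuous, and the sequence $(h_j)$ is decreasing with $h_j\geq h$ on $X$. On each $K_k$ with $k\leq j$ one has $g_k=h$ while all $g_l\geq h$, so $h_j=h$ there; hence $h_j=h$ on $K_1\cup\dots\cup K_j\supset K_j$. Consequently $\{h_j\neq h\}\subset X\setminus K_j$, whose capacity is at most $2^{-j}$, which gives convergence in capacity. For the quasi-everywhere statement I would note that $\lim_j h_j=h$ on $\bigcup_m K_m$, and its complement is contained in $X\setminus K_m$ for every $m$, so it has zero outer capacity and is therefore pluripolar.

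The only point requiring genuine care, and the place I expect the mild obstacle, is the verification of lower semi-continuity of $g_m$ at the boundary of $K_m$; everything else is bookkeeping with the capacity estimates. I should also record that the construction uses boundedness of $h$ only through the value $1$ assigned off $K_m$: if $h$ is merely bounded below I would instead assign the value $+\infty$ off $K_m$, which keeps $g_m$ lower semi-continuous as an extended-real function and leaves the rest of the argument unchanged.
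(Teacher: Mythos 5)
Your proof is correct, and it reaches the conclusion by a genuinely different mechanism than the paper's. The paper first replaces the compacts $K_\ell$ by their finite unions so as to make them increasing, then invokes the Tietze--Urysohn lemma to produce lower semi-continuous functions $H_j$ on $X$ with $H_j|_{K_j}=h|_{K_j}$ (the extension by $+\infty$ is mentioned only as an alternative), and obtains a decreasing sequence by taking suprema of tails, $h_j:=\sup_{\ell\geq j}H_\ell$; there the key equality $h_j=h$ on $K_j$ relies precisely on the compacts having been made increasing. You instead build explicit extensions $g_m$ that \emph{dominate} $h$ everywhere (equal to $h$ on $K_m$, to the constant $1$, or $+\infty$, off it) and take running minima $h_j:=\min(g_1,\dots,g_j)$: domination is what keeps the minimum equal to $h$ on $K_1\cup\dots\cup K_j$, so you need neither Tietze--Urysohn nor increasing compacts, and decreasingness is automatic. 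Your lower semi-continuity check at points of $K_m$ is the right one: sequences approaching from outside see the value $1\geq h(x)$, those from inside fall under the lower semi-continuity of $h|_{K_m}$, and closedness of $K_m$ makes these two cases exhaustive. What each route buys: yours is more elementary and self-contained, and has the pleasant extra feature $h_j\geq h$; the paper's sup-of-tails construction imposes no domination requirement on the extensions, at the cost of the preliminary monotonization of the compacts. One small caveat: your opening normalization $0\leq h\leq 1$ presupposes that $h$ is bounded, which the statement does not assume; but, as you essentially observe at the end, the $+\infty$ variant of your extension removes this restriction entirely---it works for any real-valued quasi-lsc $h$, bounded or not, since $h|_{K_m}$ is lsc on a compact set (hence bounded below there) and the only competition at the boundary comes from the value $+\infty$.
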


\begin{proof} By definition there exists a sequence of compact sets $(K_{\ell})$ such that $\capo (X \setminus K_\ell) \leq 
2^{- \ell}$  and the restriction $h|K_{\ell}$ is a lower semi-continuous function in $K_{\ell}$. 
Take  $\tilde K_j := \cup_{1 \leq \ell \leq j} K_{\ell}$ instead of $(K_j)$, we can assume that the sequence $(K_j)$ is increasing.

Since a lower semi-continuous function on a compact set is the limit of an increasing squence of continuous functions, it follows from  the Tietze-Urysohn Lemma, that there exists a lower semi-continuous function $H_j$ in $X$ such that $H_{j}|K_{j} = h|K_{j}$. Alternatively one can extend $h$ to be $+\infty$ on $X\setminus K_j$.
  
Let $h_j := \sup \{H_{\ell} ; \ell \geq j\}$. Then $(h_j)$ is a  decreasing sequence of lower semi-continuous functions in $X$ such that $h_j = h$ in $K_j$, hence it converges to $h$ in $F := \cup_j K_j$. Since $\capo^* (X \setminus F) = 0$ it follows that $h_j$ converges to $h$ quasi everywhere in $X$.

 We claim that  $(h_j)$ converges to $h$ in capacity. Indeed let $\delta > 0$ be fixed and set for $j \in \N$, $E_j := \{ x \in X ; h_j\geq h + \delta\}$. Since $h_j = h$ in $K_j$, it follows that $E_j \subset X \setminus K_j$. Hence $\capo^* (E_j) \leq \capo^* (X \setminus K_j) \leq 2^{- j}$, and then  
 $$
 \lim_{j \to +\infty} \capo^*(\{ x \in X \, :\, h_j\geq h + \delta\}) = 0,
$$
which proves our claim.
\end{proof}

\begin{pro}\label{prop: ort}
Let $h$ be a bounded quasi lower semi-continuous function in $X$ and set $\hat{h}=P_{\theta}(h)$ the $\theta$-psh envelope of $h$.
Then $\MA_{\theta} (\hat h)$ puts no mass on the set $\{\hat h < h\}$ i.e. 
$$
\int_X (\hat h - h) MA(\hat h) = 0.
$$
\end{pro}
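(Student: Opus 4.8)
The plan is to reduce to the genuinely lower semicontinuous situation already settled in Lemma \ref{lem: ort} by approximating $h$ from above, and then to pass to the limit using the energy-convergence result of Proposition \ref{lem: capacity implies energy}.

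First I would invoke the preceding lemma to produce a decreasing sequence $(h_j)$ of lower semicontinuous functions converging to $h$ both in capacity and quasi-everywhere. Truncating by a constant $\geq \sup_X h$, I may assume the $h_j$ are bounded with $h_j \geq h$ everywhere; and since modifying each $h_j$ on the pluripolar set where it differs from $h$ leaves the envelope unchanged (part 1 of Proposition \ref{prop: basic property of envelope}), I may also assume $h_j \downarrow h$ pointwise. Part 2 of Proposition \ref{prop: basic property of envelope} then gives $P(h_j) \downarrow \hat h$. Each $h_j$ being lower semicontinuous has lower semicontinuity set $L(h_j)=X$, so Lemma \ref{lem: ort} applies and yields, for every $j$,
\[
\int_X (h_j - P(h_j))\,\MA(P(h_j)) = 0 ,
\]
so that $h_j = P(h_j)$ holds $\MA(P(h_j))$-almost everywhere.

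Next I would organise the limit. The functions $P(h_j)$ and $\hat h$ have minimal singularities, hence lie in $\mathcal{E}^1(X,\theta)$, and $P(h_j)\downarrow \hat h$ forces convergence in capacity; since $P(h_j)\geq \hat h$ with $\hat h\in\mathcal{E}^1(X,\theta)$, Proposition \ref{lem: capacity implies energy} gives $I_1(P(h_j),\hat h)\to 0$, and in particular
\[
0 \leq \int_X (P(h_j)-\hat h)\,\MA(P(h_j)) \longrightarrow 0 .
\]
Writing $h-\hat h = (h-h_j) + (h_j-P(h_j)) + (P(h_j)-\hat h)$ and integrating against $\MA(P(h_j))$, the middle term vanishes by the identity above, the first is $\leq 0$ because $h\leq h_j$, and the last tends to $0$; hence $\limsup_j \int_X (h-\hat h)\,\MA(P(h_j)) \leq 0$. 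Finally I would transfer this to the limiting measure: $h-\hat h$ is bounded and quasi lower semicontinuous, and $\MA(P(h_j))\to\MA(\hat h)$ weakly (Lemma \ref{lem: plurifine convergence}), so the lower-semicontinuity (portmanteau) property would give $\int_X (h-\hat h)\,\MA(\hat h) \leq \liminf_j \int_X (h-\hat h)\,\MA(P(h_j)) \leq 0$. Since $\hat h\leq h$ quasi-everywhere by part 1 of Proposition \ref{prop: basic property of envelope} and $\MA(\hat h)$ charges no pluripolar set, the reverse inequality $\int_X (h-\hat h)\,\MA(\hat h)\geq 0$ holds, and combining the two yields $\int_X(\hat h - h)\,\MA(\hat h)=0$.

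The main obstacle is the last step: justifying $\int_X (h-\hat h)\,\MA(\hat h)\leq \liminf_j \int_X (h-\hat h)\,\MA(P(h_j))$ for the merely quasi lower semicontinuous integrand $h-\hat h$. For a genuinely lower semicontinuous bounded integrand this is the classical portmanteau inequality under weak convergence, so the task is to discard the exceptional open set $U$ on which $h$ fails to be lower semicontinuous. As $U$ may be taken of arbitrarily small capacity and the $P(h_j)$ have uniformly minimal singularities, the estimate $\int_E \MA(P(h_j)) \leq C\,\capo(E)$ shows that the contribution of $U$ to all the integrals in play is uniformly small, which reduces matters to the lower semicontinuous case on the compact complement $X\setminus U$.
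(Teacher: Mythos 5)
Your proposal is correct and takes essentially the same route as the paper: approximate $h$ from above by the decreasing sequence of lsc functions produced by the preceding lemma, apply Lemma \ref{lem: ort} to each $h_j$, exploit the monotone/energy convergence of $P(h_j)\downarrow \hat h$ to handle $\int_X (P(h_j)-\hat h)\,\MA(P(h_j))$, and conclude with a semicontinuity argument for the quasi-lsc integrand against the weakly convergent measures $\MA(P(h_j))$. The only difference is bookkeeping: where you sketch a hands-on justification of the final semicontinuity step (discarding an open set of small capacity via the uniform bound $\int_E \MA(P(h_j)) \leq C\,\captheta(E)$), the paper simply cites this convergence property from \cite{GZbook,DDL16}.
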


\begin{proof} By the previous lemma there exists a decreasing sequence $(h_j)$ of lsc functions in $X$ such that
$(h_j)$ converges to $h$ in capacity. 

From Lemma~\ref{lem: ort} we know that $\int_X (h_j-\hat h_j) \MA(\hat h_j) = 0$ for any $j$. We also know by Lemma \ref{prop: basic property of envelope} that $(\hat h_j )$ decreases to $\hat h$.  In particular the convergence holds in energy, hence 
$$
\lim_{j\to +\infty}\int_X \hat{h}_j \MA(\hat h_j) =\int_X \hat{h} \MA(\hat{h}). 
$$
On the other hand  the functions $h_j$ are lower semi-continuous, uniformly bounded and converge to $h$ in capacity, hence (see \cite{GZbook, DDL16})
$$
\int_X h \MA(\hat h)  \leq   \liminf_{j \to + \infty} \int_X h \MA(\hat h_j)= \liminf_{j \to + \infty} \int_X h_j \MA(\hat h_j). 
$$
This implies that $\MA(\hat{h})$ puts no mass on the set $\{\hat{h}<h\}$.  
\end{proof}

 The orthogonal relation $\int_X (P(u)-u) \MA(P(u))=0$ does not hold in general, as the following example shows:

\begin{exa}\label{exa: orthogonal relation false}
Assume $X=\C\mathbb{P}^n$ is the complex projective space equipped with the Fubini-Study metric $\theta=\omega_{FS}$.
Let $B$ denote  the unit ball in $\C^n \subset \C\mathbb{P}^n$ and
consider $u$ to be $-1$ on $B$ and $0$ elsewhere.  Then $P(u)$ is the relative extremal function (see \cite{GZ05}) of $B$ which 
	takes values $-1$ on the boundary $\partial B$: we let the reader check  that for $z \in \C^n$,
	$$
	P(u)(z)+\log \sqrt{1+|z|^2}=\max \left\{ \log \sqrt{1+|z|^2}-1; \log|z|+\frac{\log 2}{2}-1 \right\},
	$$
thus $\MA(P(u))$ does not vanish on $\partial B \subset \{P(u)<u\}$. 
\end{exa}

Proposition \ref{prop: ort} generalizes to any upper bounded Borel function $h$  which admits a subextension $\psi \in \mathcal E (X,\theta)$ i.e. $\psi \leq h$ in $X$. 

\begin{thm} 
Let $h$ be a quasi lower semi-continuous function bounded from above in $X$.
Assume there exists $\psi \in \mathcal E (X,\theta)$ s.t. $\psi \leq h$ in $X$. Then 

1. $\hat h = P_{\theta}(h) \in \mathcal E (X,\theta)$ and $\hat h \leq h$ quasi everywhere in $X$.

2. $\MA_\theta (\hat h)$ puts no mass on the set $\{\hat h < h\}$.
\end{thm}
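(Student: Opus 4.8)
The plan is to bootstrap Proposition~\ref{prop: ort} (which handles bounded quasi-lsc densities) to the present unbounded setting, the extra input being the subextension $\psi$.

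\smallskip

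\emph{Part 1.} Since $\psi\in\mathcal E(X,\theta)\subset\psh(X,\theta)$ satisfies $\psi\le h$, it is an admissible competitor in the envelope, so $\hat h=P_\theta(h)\ge\psi$; in particular $\hat h\not\equiv-\infty$ and $\hat h\in\psh(X,\theta)$. As $\hat h$ is $\theta$-psh and less singular than $\psi\in\mathcal E(X,\theta)$, the monotonicity of the total non-pluripolar mass forces $\vol(\theta)=\int_X\MA(\psi)\le\int_X\MA(\hat h)\le\vol(\theta)$, whence $\hat h\in\mathcal E(X,\theta)$. The inequality $\hat h\le h$ quasi everywhere then follows exactly as in Proposition~\ref{prop: basic property of envelope}: the unregularized supremum $S:=\sup\{u\in\psh(X,\theta):u\le h\}$ is $\le h$ everywhere, and $\{\hat h>h\}\subset\{S^{*}>S\}$ is negligible, hence pluripolar.

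\smallskip

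\emph{Part 2.} For the orthogonality I would approximate $h$ from below by $h_k:=\max(h,-k)$, which are bounded, quasi-lsc, and decrease pointwise to $h$. Proposition~\ref{prop: ort} applies to each $h_k$ and gives $\int_X(\hat h_k-h_k)\,\MA(\hat h_k)=0$, where $\hat h_k:=P_\theta(h_k)$. By Proposition~\ref{prop: basic property of envelope}(2) one has $\hat h_k\searrow\hat h$, and since $h_k\ge h\ge\psi$ the whole sequence satisfies $\hat h_k\ge\psi$, so it stays in $\mathcal E(X,\theta)$ with a uniform finite-energy lower bound. It remains to let $k\to\infty$ in these relations.

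\smallskip

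\emph{The limit and the main obstacle.} Since $\hat h_k,\hat h\in\mathcal E(X,\theta)$ may fail to lie in $\mathcal E^1$, the energy integrals $\int_X\hat h_k\,\MA(\hat h_k)$ can a priori be infinite and the ``convergence in energy'' shortcut of Proposition~\ref{prop: ort} is unavailable; this is where I expect the real work to be. I would therefore aim directly at the measure statement $\MA(\hat h)(\{\hat h<h\})=0$ and recover the displayed integral afterwards (the integrand then being $0$ a.e.). Writing $\MA(\hat h)=\lim_j\nearrow\mathbf 1_{\{\hat h>V_\theta-j\}}\MA(\max(\hat h,V_\theta-j))$, it suffices to prove, for each fixed $j$, that $\MA(\max(\hat h,V_\theta-j))$ charges no mass on $\{\hat h>V_\theta-j\}\cap\{\hat h<h\}$. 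The truncated potentials $\max(\hat h_k,V_\theta-j)\searrow\max(\hat h,V_\theta-j)$ have uniformly minimal singularities, so Lemma~\ref{lem: plurifine convergence} provides weak convergence of their Monge-Amp\`ere measures together with the plurifine $\liminf$/$\limsup$ inequalities; combined with the plurifine locality $\mathbf 1_{\{\hat h_k>V_\theta-j\}}\MA(\max(\hat h_k,V_\theta-j))=\mathbf 1_{\{\hat h_k>V_\theta-j\}}\MA(\hat h_k)$ and the orthogonality $\MA(\hat h_k)(\{\hat h_k<h_k\})=0$ already in hand, this should transfer the no-mass property to the limit. The genuinely delicate point, analogous to the lsc step in Proposition~\ref{prop: ort} but now with an unbounded-below quasi-lsc density, is to test the weakly converging measures against $h$ on the moving contact sets $\{\hat h_k=h_k\}$; I would handle it by the Tietze-type lsc approximation of $h$ used there, combined with the truncations above and the uniform lower bound $\psi$.
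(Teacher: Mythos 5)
Your Part 1 is correct and is essentially the paper's own argument: $\psi\le h$ forces $\psi\le\hat h$, any $\theta$-psh function dominating a member of $\mathcal E(X,\theta)$ lies in $\mathcal E(X,\theta)$, and the quasi-everywhere inequality follows from negligibility of $\{S^*>S\}$ as in Proposition \ref{prop: basic property of envelope}. Your setup for Part 2 --- truncating $h_k=\max(h,-k)$, applying Proposition \ref{prop: ort} to each bounded quasi-lsc $h_k$, and noting $\hat h_k\searrow\hat h\ge\psi$ --- also matches the paper, and you correctly identify that the whole difficulty is passing the orthogonality to the limit without $\mathcal E^1$-energy available.

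That limit passage, however, is exactly where your proposal stops being a proof. Two concrete obstructions to the plan you sketch: (i) Lemma \ref{lem: plurifine convergence} gives the plurifine $\liminf$/$\limsup$ inequalities only for sets $\{\varphi_1<\varphi_2\}$ cut out by \emph{fixed quasi-psh} functions, whereas your sets $\{\hat h_k<h_k\}$ involve the quasi-lsc obstacle $h_k$ (not quasi-psh) and, worse, move with $k$: both potential and obstacle change, and $\{\hat h<h\}\not\subset\{\hat h_k<h_k\}$ since $\hat h_k$ may touch $h_k$ at points where $\hat h<h$. So "transferring the no-mass property" does not follow from that lemma, with or without the $V_\theta-j$ truncations. (ii) Any control of the error caused by these moving contact sets needs a bound of the form $\MA(\hat h_k)(E)\le F(\captheta(E))$ \emph{uniform in} $k$; this is exactly what the sandwich $\psi\le\hat h_k\le 0$ together with Theorem \ref{thm: DNL big} provides, packaged in the paper as Lemma \ref{lem: convergence in cap generalized}, and this ingredient never appears in your proposal. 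The paper's proof avoids the set-theoretic transfer entirely: Proposition \ref{prop: ort} gives $\int_X\min(h_k-\hat h_k,1)\,\MA(\hat h_k)=0$ with a \emph{bounded} integrand; since $h_k=h$ off $\{\psi<-k\}$, one has $h_k\to h$ in capacity, so $\phi_k:=\min(h_k-\hat h_k,1)$ converges in capacity to $\phi:=\min(h-\hat h,1)$; Lemma \ref{lem: convergence in cap generalized} then yields $\int_X|\phi_k-\phi|\,\MA(\hat h_k)\to 0$, and testing the bounded quasi-lsc function $\phi\ge 0$ against the weakly convergent measures $\MA(\hat h_k)\to\MA(\hat h)$ gives $\int_X\phi\,\MA(\hat h)=0$, which is Part 2. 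Either adopt this argument or supply an actual proof of your transfer step: the point you yourself flag as "genuinely delicate" is asserted, not proved.
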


\begin{proof} 
We may assume that $h \leq 0$ in $X$.  Since $\psi\in \Ec(X,\theta)$  
there exists a convex increasing weight $\chi : \R^- \to \R^-$ such that $\psi \in \mathcal E_{\chi} (X,\theta)$ (see \cite{GZ07,BEGZ10}). 
Since $\psi \leq h$, we conclude from the definition that $\psi \leq \hat{h}$ in $X$, hence $\hat{h} \in \mathcal E (X,\theta)$ by \cite{GZ07,BBEGZ11}.  

Set $h_j := \max (h, - j)$, by the previous theorem $\MA (\hat h_j)$ is carried by the set $\{\hat h_j = h_j\}$. Hence for any $j$,  

 $$
 \int_X \min(h_j-\hat{h}_j,  1) \MA(\hat h_j)= 0.
$$
Since $h_j = h$ off the set $\{ h < - j\} \subset \{\psi < - j\}$ and $\capo (\{\psi < - j\}) \to 0$ as $j \to + \infty$, it follows that $h_j \to h$ in capacity in $X$. Hence $\phi_j:=\min(h_j-\hat{h}_j,1)$ converges to $\phi:=\min(h-\hat{h},1)$ in capacity.  Lemma \ref{lem: convergence in cap generalized} insures
\[
\lim_{j\to +\infty}\int_X |\phi_j-\phi| \MA(\hat{h}_j)=0. 
\]
On the other hand since $\phi$ is bounded and lower semi-continuous on $X$ it follows from convergence property of the complex Monge-Amp\`ere operator that 
\[
\int_X \phi \MA(\hat{h}) \leq  \liminf_{j\to +\infty} \int_X\phi\MA(\hat{h}_j). 
\]
As $\phi\geq 0$ we thus get $\int_X \phi \MA(\hat{h})=0$, finishing the proof. 
\end{proof}

\begin{lem} 	\label{lem: convergence in cap generalized}
	Fix $0\geq \varphi\in \Ec(X,\theta)$  and let $(f_j)$ be a sequence of positive uniformly bounded measurable functions on $X$ which converges in capacity to $0$. Then 
	\[
	\lim_{j\to +\infty} \sup\left\{ \int_X f_j \MA(\psi) \, :\, \psi\in \psh(X,\theta) , \varphi\leq \psi \leq 0 \right\} =0. 
	\]
\end{lem}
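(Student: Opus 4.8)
The plan is to identify the supremum in the statement with the functional analogue of the generalized capacity $\capfp$ (with $\psi=V_\theta$) and then to invoke Theorem \ref{thm: DNL big} together with a layer-cake decomposition. Fix a uniform bound $0\le f_j\le M$. Any admissible $\psi$ satisfies $\varphi\le\psi\le 0$, and since $V_\theta$ is the largest $\theta$-psh function $\le 0$, the constraint $\psi\le 0$ is equivalent to $\psi\le V_\theta$; thus the supremum is taken exactly over the class $\{\varphi\le\psi\le V_\theta\}$ defining $\capfp$ with $\psi=V_\theta$.

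First I would rewrite, for each admissible $\psi$, using the distribution-function formula $\int_X f_j\,\MA(\psi)=\int_0^M \MA(\psi)(\{f_j>t\})\,dt$ (valid as $f_j\le M$). For every $t$ the definition of the generalized capacity gives $\MA(\psi)(\{f_j>t\})\le \capfp(\{f_j>t\})$, a bound \emph{independent} of $\psi$. Integrating in $t$ and then taking the supremum over $\psi$ yields
\[
\sup\Big\{\int_X f_j\,\MA(\psi):\varphi\le\psi\le 0\Big\}\ \le\ \int_0^M \capfp(\{f_j>t\})\,dt .
\]

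Next, since $0\ge\varphi\in\Ec(X,\theta)$, there is a convex weight $\chi:\R^-\to\R^-$ with $\varphi\in\mathcal E_\chi(X,\theta)$ (as recalled above, following \cite{GZ07,BEGZ10}). Theorem \ref{thm: DNL big} then provides a continuous $F_\chi$ with $F_\chi(0)=0$ and $\capfp(\{f_j>t\})\le F_\chi(\capo(\{f_j>t\}))$. It remains to let $j\to+\infty$. For each fixed $t>0$, convergence of $f_j$ to $0$ in capacity means $\capo(\{f_j>t\})\to 0$, so $F_\chi(\capo(\{f_j>t\}))\to 0$ by continuity of $F_\chi$. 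Since $\capo(\{f_j>t\})\le\capo(X)=\vol(\theta)<+\infty$ and $F_\chi$ is continuous on the compact interval $[0,\vol(\theta)]$, the integrands are bounded uniformly in $j$ and $t\in[0,M]$, so dominated convergence gives $\int_0^M F_\chi(\capo(\{f_j>t\}))\,dt\to 0$, which is the claim.

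The only points requiring care, and hence the main (modest) obstacle, are the interchange of supremum and integral --- which works precisely because the capacity bound $\MA(\psi)(\{f_j>t\})\le\capfp(\{f_j>t\})$ is uniform in $\psi$ --- and the justification of the final passage to the limit. Once Theorem \ref{thm: DNL big} is available, the quantitative comparison it supplies reduces everything to a routine dominated-convergence argument.
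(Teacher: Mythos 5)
Your proof is correct and follows essentially the same route as the paper's: the key points in both are the uniform bound $\MA(\psi)(E)\leq {\rm Cap}_{\varphi,V_{\theta}}(E)$ for all admissible $\psi$, the comparison of generalized capacities via Theorem \ref{thm: DNL big}, and the definition of convergence in capacity. The only (cosmetic) difference is bookkeeping: the paper splits the integral at a single level $\vep$ (bounding the contribution of $\{f_j<\vep\}$ by $\vep\,\vol(\theta)$ and then letting $j\to+\infty$, $\vep\to 0$), whereas you use the full layer-cake formula and dominated convergence in $t$.
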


\begin{proof}
	Fix $\psi\in \Ec(X,\theta)$ such that $\varphi\leq \psi \leq 0$.  Since $f_j$ is uniformly bounded, we have $f_j\leq C$ for any $j$. Now for fixed $\vep>0$ we have 
	\[
	\int_X f_j \MA(\psi) \leq  C\int_{\{f_j\geq \vep \}}  \MA(\psi) + \vep  \vol(\theta) \leq C {\rm Cap}_{\varphi,V_{\theta}}(f_j\geq \vep) + \vep \vol(\theta).
	\]
	It follows from Theorem \ref{thm: DNL big} that ${\rm Cap}_{\varphi,V_{\theta}}(f_j\geq \vep)$ converges to $0$ as $j\to +\infty$. The conclusion follows by letting $\vep \to 0$.
\end{proof}

\subsection{Envelopes with respect to a measure}

Let $\mu$ be a positive measure on $X$ which does not charge pluripolar sets. 

\begin{defi} \label{def:modified}
The $(\theta,\mu)$-envelope of a  measurable function $u:X \rightarrow \R$ is defined by 
\[
P_{\theta,\mu}(u) := \left(\sup \{\varphi \in \psh(X,\theta) \, :\, \varphi\leq u\, \, \mu-a.e.\}\right)^*.
\] 
\end{defi}

This notion generalizes the one introduced in Definition \ref{def:usual}.

\begin{pro} \label{pro: modified envelope is well-defined}	
Assume that $u$ is  bounded from below and there is  $b>0$ such that $\mu(u<b)>0$. 
Then $P_{\theta,\mu}(u)$ is a well-defined $\theta$-psh function with minimal singularities. Moreover, $P_{\theta,\mu}(u)\leq u$ holds $\mu$-almost everywhere. 

If $(h_j)$ is a decreasing sequence of bounded measurable functions   which converge pointwise to $h$ in $X$,
 then $P_{\theta,\mu}(h_j)$ decreases to $P_{\theta,\mu}( h)$ in $X$.
\end{pro}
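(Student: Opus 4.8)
The plan is to establish the three assertions in turn, mimicking the proof of Proposition \ref{prop: basic property of envelope} but accounting for the fact that the defining constraint now holds only $\mu$-almost everywhere. For a function $w$ write $\mathcal F(w) := \{\varphi \in \psh(X,\theta) : \varphi \le w \text{ $\mu$-a.e.}\}$, so that $P_{\theta,\mu}(w) = (\sup \mathcal F(w))^*$. First I would record two elementary structural facts: $\mathcal F(w)$ is stable under taking maxima (if $\varphi,\varphi' \le w$ $\mu$-a.e. then $\max(\varphi,\varphi') \le w$ $\mu$-a.e.), and, when $u \ge -M$, the function $V_{\theta}-M$ lies in $\mathcal F(u)$ since $V_{\theta} \le 0$. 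The latter shows $\mathcal F(u)\neq\emptyset$ and that $P_{\theta,\mu}(u)\ge V_{\theta}-M$; once we know $P_{\theta,\mu}(u)$ is genuinely $\theta$-psh this yields the minimal singularities.

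The crux is to show that $\mathcal F(u)$ is uniformly bounded above, so that $(\sup\mathcal F(u))^*$ is a bona fide $\theta$-psh function and not identically $+\infty$. This is where the hypothesis $\mu(\{u<b\})>0$ enters: since $\mu$ does not charge pluripolar sets, the Borel set $A := \{u<b\}$ is non-pluripolar. By Choquet's lemma there is a countable subfamily whose supremum has the same upper regularization as $\sup\mathcal F(u)$; replacing its members by successive maxima (legitimate by the max-stability above) I obtain an increasing sequence $(\psi_k) \subset \mathcal F(u)$ with $(\sup_k \psi_k)^* = P_{\theta,\mu}(u)$. Each exceptional set $\{\psi_k > u\}$ is $\mu$-null, so its countable union $N$ is $\mu$-null and $A' := A \setminus N$ satisfies $\mu(A')=\mu(A)>0$, hence is non-pluripolar, while $\psi_k \le u < b$ on $A'$ for every $k$ simultaneously. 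Passing to a countable increasing sequence is precisely what produces one \emph{fixed} non-pluripolar test set $A'$ controlling all the $\psi_k$. Now the global extremal function $V_{\theta,A'} := \sup\{\varphi \in \psh(X,\theta) : \varphi \le 0 \text{ on } A'\}$ has bounded upper regularization, as $A'$ is non-pluripolar \cite{GZ05,BEGZ10}; since $\psi_k - b \le 0$ on $A'$ we get $\psi_k \le b + V_{\theta,A'}^* \le b+T$ with $T:=\sup_X V_{\theta,A'}^*<+\infty$, uniformly in $k$. Hence $P_{\theta,\mu}(u)\le b+T$ and it is a $\theta$-psh function with minimal singularities. I expect this uniform upper bound to be the main obstacle, and its resolution — using Choquet plus max-stability to turn an a priori uncontrolled family into a single increasing sequence with a fixed non-pluripolar test set — to be the key idea.

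For the $\mu$-a.e. domination I would reuse the same increasing sequence $(\psi_k)$: its pointwise supremum satisfies $\sup_k \psi_k \le u$ off the $\mu$-null set $N$, and by \cite{BT82} the set $\{(\sup_k\psi_k)^* > \sup_k \psi_k\}$ is negligible, hence pluripolar, hence $\mu$-null. Therefore $P_{\theta,\mu}(u) = (\sup_k \psi_k)^* \le u$ off a $\mu$-null set, which is exactly $P_{\theta,\mu}(u)\le u$ $\mu$-a.e.

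Finally, for the monotone statement let $(h_j)$ decrease pointwise to $h$. From $\varphi \le h \le h_j$ we get $\mathcal F(h) \subseteq \mathcal F(h_{j+1}) \subseteq \mathcal F(h_j)$, so $(P_{\theta,\mu}(h_j))_j$ is a decreasing sequence of $\theta$-psh functions bounded below by $V_{\theta}-M$; its limit $\Phi$ is therefore $\theta$-psh (not identically $-\infty$) and satisfies $\Phi \ge P_{\theta,\mu}(h)$. To reverse the inequality I invoke the domination just proved: for each $j$ one has $P_{\theta,\mu}(h_j)\le h_j$ $\mu$-a.e., hence $\Phi \le h_j$ $\mu$-a.e.; intersecting the countably many exceptional $\mu$-null sets gives $\Phi \le \inf_j h_j = h$ off a $\mu$-null set. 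Thus $\Phi \in \mathcal F(h)$, so $\Phi \le P_{\theta,\mu}(h)$, whence $\Phi = P_{\theta,\mu}(h)$ and the asserted monotone convergence follows.
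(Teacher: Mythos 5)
Your proof is correct, but your route to the crucial step --- the uniform upper bound on the defining family --- is genuinely different from the paper's. The paper argues quantitatively, competitor by competitor: writing $\mu=\MA(\psi)$ for some $\psi\in\Ec(X,\theta)$ (possible since $\mu$ is non-pluripolar, after normalizing the mass), each $\varphi\leq u$ $\mu$-a.e. satisfies $\mrm{Cap}_{\psi,V_{\theta}}(\varphi<b)\geq\mu(\varphi<b)\geq\mu(u<b)>0$, and the comparability of generalized capacities (Theorem \ref{thm: DNL big}) turns this into $\captheta(\varphi<b)\geq c_2>0$ with $c_2$ independent of $\varphi$; the standard estimate of \cite{GZ05,BEGZ10} bounding $\sup_X\varphi$ in terms of the capacity of the sublevel set $\{\varphi<b\}$ then gives one constant $C$ dominating every competitor simultaneously. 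You instead argue qualitatively: the obstacle that the test set $\{\varphi<b\}$ moves with $\varphi$ is neutralized by Choquet's lemma plus max-stability, which replaces the family by a single increasing sequence and hence produces one fixed non-pluripolar set $A'$ controlling all terms at once, after which you only need the elementary fact that a non-pluripolar set has a finite global extremal function. Your approach is more economical in its ingredients (no Theorem \ref{thm: DNL big}, no variational existence theorem producing $\psi$ with $\MA(\psi)=\mu$), while the paper's yields an effective bound depending only on $b$, $\mu(u<b)$ and $\theta$ --- uniform over each competitor individually, hence the kind of quantitative control reusable in stability or compactness arguments --- whereas your bound $b+T$ depends on the chosen countable subfamily through the exceptional null set $N$. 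The $\mu$-a.e. domination (Choquet plus ``negligible sets are pluripolar'') and the monotone statement are handled essentially as in the paper, which leaves the latter as ``straightforward''; just note that your uniform minorant $V_{\theta}-M$ in that last part tacitly assumes the limit $h$ is bounded below, which is the intended reading of the statement.
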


\begin{proof}
	We first prove that $P_{\theta,\mu}(u)$ is bounded from above. Indeed, fix an arbitrary $\theta$-psh function $\varphi$ such that 
	$\varphi\leq u$ $\mu$-a.e. Set 
	$$ 
	K:=\{x\in X\, :\, \varphi(x)<b\},
	$$ 
	so that $\mu(K)>0$.  
 Then $\varphi-b \leq V_{\theta,K}^*$, where $V_{\theta,K}^*$ is the global extremal $\theta$-psh function of  $K$. It follows from Theorem \ref{thm: DNL big} that $\captheta(\varphi<b)\geq c>0$, where $c$ does not depend on $\varphi$. 
 Indeed, since $\mu$ is non pluripolar we can find  $\psi\in \Ec(X,\theta)$ with $\sup_X \psi=-1$ such that $\MA(\psi)=\mu$.
	 Hence by definition of the capacity,
\[
\mrm{Cap}_{\psi,V_{\theta}}(\varphi<b)\geq \mu(\varphi<b)\geq \mu(u<b)=:  c_1>0.
\]  
By Theorem \ref{thm: DNL big} we know that $F(\captheta (E)) \geq \mrm{Cap}_{\psi,V_{\theta}}(E)$ for every Borel subset $E$. The function $F$ is  continuous and increasing, hence  $\captheta(\varphi<b)\geq F^{-1} (c_1) =: c_2>0$.  

Since the set $K$ has capacity $\geq c$, it follows from \cite{GZ05,BEGZ10} that $\sup_X \varphi\leq C$, where $C$ depends only on $c$. 
	
	Now, since the family defining $P_{\theta,\mu}(u)$ is uniformly bounded from above, the sup envelope is well defined as a $\theta$-psh function with minimal singularities. It follows from Choquet's lemma  that $P_{\theta,\mu}(u)\leq u$ holds $\mu$-almost everywhere on $X$. 

\smallskip

 	The proof of the last assertion is straightforward.
 \end{proof}

\begin{pro}	\label{prop: modified envelope}
If  $\mu$ is a volume form  and $u$ is bounded and quasi upper semi-continuous on $X$,
then $P_{\theta,\mu}(u)=P_{\theta}(u)$.
\end{pro}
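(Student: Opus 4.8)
The plan is to prove the two inequalities separately. The inequality $P_\theta(u)\le P_{\theta,\mu}(u)$ is immediate: any $\varphi\in\psh(X,\theta)$ with $\varphi\le u$ everywhere in particular satisfies $\varphi\le u$ $\mu$-almost everywhere, so the family defining $P_\theta(u)$ is contained in the family defining $P_{\theta,\mu}(u)$. Both envelopes are $\theta$-psh with minimal singularities by Proposition \ref{prop: basic property of envelope} and Proposition \ref{pro: modified envelope is well-defined}, so this inclusion of competitors yields the desired inequality.

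For the reverse inequality I would fix a competitor $\varphi\in\psh(X,\theta)$ for $P_{\theta,\mu}(u)$, i.e.\ $\varphi\le u$ holds $\mu$-a.e., and show $\varphi\le P_\theta(u)$; taking the supremum over such $\varphi$ then gives $P_{\theta,\mu}(u)\le P_\theta(u)$. By the characterization $P_\theta(u)=\sup\{\psi\in\psh(X,\theta):\psi\le u \text{ quasi everywhere}\}$ from Proposition \ref{prop: basic property of envelope}, it suffices to prove that $\varphi\le u$ holds quasi everywhere. Here the hypothesis that $\mu$ is a volume form enters decisively: its negligible sets are exactly the Lebesgue-negligible ones, so the assumption reads $\varphi\le u$ Lebesgue-a.e. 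The whole point is thus to upgrade this ``almost everywhere'' statement into a ``quasi everywhere'' statement, using that $\varphi$ is $\theta$-psh and $u$ is quasi-usc.

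The engine for this upgrade is the sub-mean value inequality. Writing $\rho$ for a smooth local potential of $\theta$, so that $\varphi+\rho$ is psh, one has $(\varphi+\rho)(x_0)=\lim_{r\to0}\frac{1}{|B_r|}\int_{B_r}(\varphi+\rho)$ where $B_r=B(x_0,r)$, while $\frac{1}{|B_r|}\int_{B_r}(\varphi+\rho)\le\frac{1}{|B_r|}\int_{B_r}(u+\rho)$ because $\varphi\le u$ a.e. If $x_0$ is a point of upper semicontinuity of $u$, then $\limsup_{r\to0}\frac{1}{|B_r|}\int_{B_r}u\le u(x_0)$, and one concludes $\varphi(x_0)\le u(x_0)$ at every such point. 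More generally this argument shows $\varphi\le u^{\sharp}$ \emph{everywhere}, where $u^{\sharp}(x):=\lim_{r\to0}\mathrm{ess\,sup}_{B(x,r)}\,u$ is the essential upper semicontinuous regularization of $u$. In particular, when $u$ is itself upper semicontinuous this already gives $\varphi\le u$ everywhere, so the proposition holds verbatim in that case.

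It then remains to check that a bounded quasi-usc function coincides with its essential usc regularization outside a pluripolar set, i.e.\ that $u^{\sharp}\le u$ quasi everywhere; combined with $\varphi\le u^{\sharp}$ this yields $\varphi\le u$ quasi everywhere, as wanted. This last step is the main obstacle. On the complement of the small-capacity open set $U$ furnished by quasi-upper-semicontinuity, the inequality $u^{\sharp}\le u$ is clear away from $\partial U$, but the naive containment $\{u^{\sharp}>u\}\subseteq\overline U$ is too lossy, since the closure of a set of small capacity may have large capacity. To circumvent this I would exploit that small Monge-Amp\`ere capacity forces small Lebesgue measure, so that at a point $x_0\notin U$ with $u^{\sharp}(x_0)>u(x_0)$ the superlevel values of $u$ near $x_0$ must concentrate, in Lebesgue measure, inside $U$; this forces $x_0$ to be a point of Lebesgue density of $U$ lying outside $U$, and letting the capacity of $U$ tend to $0$ pins $\{u^{\sharp}>u\}$ inside a set that is simultaneously Lebesgue-negligible and of vanishing outer capacity, hence pluripolar. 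Making this density/capacity argument precise — that is, genuinely transferring ``Lebesgue-a.e.'' to ``quasi everywhere'' for quasi-usc functions — is the delicate heart of the proof.
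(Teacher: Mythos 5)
Your opening moves coincide with the paper's proof: the inclusion of competitor families gives $P_{\theta}(u)\le P_{\theta,\mu}(u)$ for free, and for the converse both you and the paper fix a competitor $\varphi\le u$ Lebesgue-a.e., aim to show $\varphi\le u$ quasi everywhere, invoke the quasi-everywhere characterization of $P_{\theta}(u)$ from Proposition \ref{prop: basic property of envelope}, and settle the case of usc $u$ by the sub-mean value inequality applied to $\varphi+\rho$, with $\rho$ a smooth local potential of $\theta$. The genuine gap is the passage from usc to quasi-usc, i.e. exactly the step you call ``the delicate heart'': your proposal does not prove it, and the density/capacity sketch you offer for it fails, for two concrete reasons. (i) From $u^{\sharp}(x_0)>u(x_0)$, $x_0\notin U$, and upper semicontinuity of $u|_{X\setminus U}$, you only get that $\{u>u(x_0)+\delta\}\cap B(x_0,r)\subset U\cap B(x_0,r)$ has \emph{positive Lebesgue measure} for every $r>0$: an essential supremum detects sets of positive measure, not of density $1$, so $x_0$ need not be a Lebesgue density point of $U$; it merely lies in the measure-theoretic closure of $U$. (ii) More fundamentally, no outer-capacity bound on such limit points follows from ${\rm Cap}_{\omega}(U)$ being small. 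For instance, let $E$ be a compact non-pluripolar set of Lebesgue measure zero (a sphere, say) and let $U$ be a union of balls of very small radii centered at a countable dense subset of $E$: then ${\rm Cap}_{\omega}(U)$ can be made arbitrarily small, yet every point of $E$ lies in the measure-theoretic closure of $U$, and by subadditivity ${\rm Cap}^*_{\omega}(E\setminus U)\ge {\rm Cap}^*_{\omega}(E)-{\rm Cap}_{\omega}(U)$ stays bounded away from zero. So the set in which your argument traps $\{u^{\sharp}>u\}$ has no reason to have small outer capacity, and ``hence pluripolar'' does not follow. (Note also that ``Lebesgue-negligible and of vanishing outer capacity'' is redundant: vanishing outer capacity alone gives pluripolarity.)

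What is missing is in fact an elementary device, and it is precisely how the paper closes this step. Quasi-upper-semicontinuity provides an increasing sequence of closed (hence compact) sets $K_j$ with ${\rm Cap}_{\omega}(X\setminus K_j)\le 2^{-j}$ such that $u|_{K_j}$ is usc, and one extends $u$ from $K_j$ to a bounded usc function $u_j$ on all of $X$ agreeing with $u$ on $K_j$: for instance $u_j:=u$ on $K_j$ and $u_j:=\sup_X u$ on $X\setminus K_j$, which is usc because $K_j$ is closed, and satisfies $u_j\ge u$ everywhere. Since $\varphi\le u\le u_j$ a.e., your (and the paper's) usc case applies to $u_j$ and yields $\varphi\le u_j$ \emph{everywhere}, hence $\varphi\le u$ on $K_j$. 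Taking the union over $j$, one gets $\varphi\le u$ on $\bigcup_j K_j$, whose complement has zero outer capacity and is therefore pluripolar; thus $\varphi\le u$ quasi everywhere, as required. The paper words the extension slightly differently (it uses extensions increasing in $j$ and redoes the mean-value estimate for them), but the mechanism is identical. No fine-topology or density analysis is needed: the whole point of the quasi-usc hypothesis is that it hands you closed sets, from which usc extension is trivial.
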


\begin{proof}
	Fix an arbitrary function $\varphi\in \psh(X,\theta)$ such that $\varphi\leq u$ almost everywhere with respect to Lebesgue measure. Assume that $u$ is upper semi-continuous. Fix $x_0\in X$ and consider a local chart around $x_0$. Let $\rho$ be a smooth local potential of $\theta$ in this chart. The sub-mean value inequality yields
	\[
	\varphi(x_0) + \rho(x_0)  \leq \limsup_{\vep \to 0} \oint_{B(x_0,\vep)} (u(x)+\rho(x)) dV(x)\leq u(x_0)+ \rho(x_0),
	\]
	where the last inequality holds because $u$ is upper semicontinuous  and $\rho$ is continuous on $X$. 
	
	Assume now that $u$ is quasi upper semi-continuous on $X$. For each fixed $j\in \N$ there exists a compact set $K_j\subset X$ such that $\capo(X\setminus K_j)\leq 2^{-j}$ and the restriction of $u$ on $K_j$ is upper semi-continuous. We can also assume that $K_j$ is increasing in $j$. Let $u_j$ be a bounded function on $X$ which is upper semi-continuous and $u_j=u$ on $K_j$. 
	We can impose that $(u_j)$ is increasing in $j$.  It follows from the sub-mean value inequality that 
	 \begin{eqnarray*}
	 	\varphi(x_0) &\leq & \limsup_{\vep\to 0}\oint_{B(x_0,\vep)}u(x) dV(x)\\
	 	&\leq & \limsup_{\vep\to 0}\oint_{B(x_0,\vep)} (u(x)-u_j(x))dV(x) +  \limsup_{\vep\to 0}\oint_{B(x_0,\vep)} u_j(x) dV(x) \\
	 	&\leq & 2^{-j}\sup_X (u-u_j)  + u_j(x_0). 
	 \end{eqnarray*}
	 Thus for $x\in \cup_{j} K_j$ we have that $\varphi(x) \leq u(x)$. 
Since $X \setminus \cup_j K_j$ is pluripolar, it follows that $\varphi\leq u$ 
quasi everywhere on $X$, thus $\varphi\leq P_{\theta}(u)$. 
\end{proof}

In general the $(\theta,\mu)$-envelope is different from the usual one as the following example shows :

\begin{Exa} \label{ex: Berman convergence does not hold for general data}
Take a non-pluripolar set $E\subset X$ which has zero Lebesgue measure. Let $u$ be the function that takes value $-1$ on $E$ and $0$ on $X\setminus E$. Take $\mu=\omega^n$ and $\theta=\omega$. Then the $(\theta,\mu)$-envelope of $u$ is identically $0$ while its usual envelope  $P_{\omega}(u)$ is the relative extremal function of $E$ which is not identically zero because $E$ is non-pluripolar. 
\end{Exa}

We will see later on that for any constant $C>0$, the function $u$ defined in the example above cannot be a 
viscosity supersolution of the equation
\[
-(\omega +dd^c u)^n + C\omega^n=0.
\]

\subsection{Approximation of envelopes and proof of \ref{thmB}}\label{sect: approximation}

Fix a probability measure $\mu$ on $X$ which does not charge pluripolar sets. 
For each $j\in \N^*$ it follows from \cite{BEGZ10,BBGZ13} that there exists a unique $\theta$-psh function $\varphi_j$ with minimal singularities such that 
 \begin{equation}
 \label{eq: Berman convergence}
 \MA(\varphi_j)=e^{j(\varphi_j-u)} \mu. 
 \end{equation}
If $\mu$ is a smooth volume form and $u$ is smooth on $X$, Berman proved in \cite{Ber13} that $(\varphi_j)$ converges in energy toward the Monge-Amp\`ere envelope $P_{\theta}(u)$ \footnote{The convergence result in \cite{Ber13} has been recently  generalized to measures satisfying the Bernstein-Markov condition}. 
 The purpose of this section is to relax the regularity assumption on 
$\mu$ and $u$. We first observe the following

\begin{Lem}\label{lem: lower bound}
 Let $\phi$ be the unique function in $\Ec^1(X,\theta)$ such that $\MA_{\theta}(\phi)=e^{\phi}\mu$. Then 
 \begin{equation}\label{eq: lower bound for Berman convergence}
 \varphi_j\geq \left(1-\frac{1}{j}\right) P_{\theta,\mu}(u) + \frac{\phi}{j} + \frac{1}{j}(-n\log j +\inf_X u).
 \end{equation}
\end{Lem}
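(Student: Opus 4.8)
The plan is to recognise the right-hand side of \eqref{eq: lower bound for Berman convergence} as a subsolution of \eqref{eq: Berman convergence} and to conclude with the exponential comparison principle (Proposition \ref{prop: comparison principle exponential}). Writing $t:=1/j$ and $P:=P_{\theta,\mu}(u)$, I set
$$
w:=(1-t)P+t\phi+c_j,\qquad c_j:=\tfrac1j\bigl(-n\log j+\inf_X u\bigr),
$$
so that the assertion is exactly $w\le\varphi_j$. First I would record that $w\in\Ec(X,\theta)$: since $P$ and $V_\theta$ have minimal singularities and $\phi\in\Ec^1(X,\theta)$, one has $P\ge V_\theta-C\ge\phi-C'$ for suitable constants, whence $w\ge\phi-C''$; as a $\theta$-psh function dominating a finite-energy function, $w$ lies in $\Ec(X,\theta)$ (\cite{GZ07,BBEGZ11}), and $\varphi_j\in\Ec(X,\theta)$ because it has minimal singularities. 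Thus Proposition \ref{prop: comparison principle exponential} will be applicable once the subsolution inequality is checked.

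The key point is a lower bound for $\MA(w)$. Since $\theta+dd^c w=(1-t)(\theta+dd^c P)+t(\theta+dd^c\phi)$ is a convex combination of two positive currents, the multilinearity and positivity of the non-pluripolar Monge--Amp\`ere product (\cite{BEGZ10}) yield
$$
\MA(w)=\sum_{k=0}^n\binom nk(1-t)^k t^{n-k}\bigl\langle(\theta+dd^c P)^k\wedge(\theta+dd^c\phi)^{n-k}\bigr\rangle\ge t^n\MA(\phi)=j^{-n}e^{\phi}\mu,
$$
where the last identity uses $\MA(\phi)=e^{\phi}\mu$ from Theorem \ref{thm: BBGZ13}. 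It then remains to verify, $\mu$-almost everywhere, the density inequality $j^{-n}e^{\phi}\ge e^{j(w-u)}$, equivalently $-n\log j+\phi\ge j(w-u)$. Substituting $j(w-u)=(j-1)P+\phi-n\log j+\inf_X u-ju$ and simplifying, this reduces to $ju-(j-1)P\ge\inf_X u$ $\mu$-a.e. Here I would invoke Proposition \ref{pro: modified envelope is well-defined}, which gives $P\le u$ $\mu$-a.e.; since $j\ge1$ this yields $ju-(j-1)P=u+(j-1)(u-P)\ge u\ge\inf_X u$ $\mu$-a.e., as required.

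Combining the two inequalities gives $\MA(w)\ge e^{j(w-u)}\mu=e^{jw}e^{-ju}\mu$ while $\MA(\varphi_j)=e^{j\varphi_j}e^{-ju}\mu$, so Proposition \ref{prop: comparison principle exponential} with $\beta=j$ and $f=ju$ produces $w\le\varphi_j$, which is \eqref{eq: lower bound for Berman convergence}. I expect the density computation to be routine; the genuine obstacles are the two pluripotential inputs that make the comparison principle usable, namely the superadditivity $\MA(w)\ge t^n\MA(\phi)$ for non-pluripolar products of merely finite-energy potentials (where ordinary monotonicity of the operator fails, so one must argue through multilinearity and positivity of the mixed products), and the verification that $w$ indeed belongs to $\Ec(X,\theta)$.
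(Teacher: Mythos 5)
Your proof is correct and follows essentially the same route as the paper: the paper also treats the right-hand side as a finite-energy pluripotential subsolution of $(\theta+dd^c\varphi_j)^n=e^{j(\varphi_j-u)}\mu$, verified via $P_{\theta,\mu}(u)\leq u$ $\mu$-a.e., and concludes by the comparison principle (Proposition \ref{prop: comparison principle exponential}). The only difference is that the paper states the subsolution check as "one can check," while you supply the details (multilinearity and positivity of the non-pluripolar product giving $\MA(w)\geq j^{-n}\MA(\phi)$, plus the $\mu$-a.e. density computation), which is exactly the intended verification.
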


\begin{proof} 
Denote by $\psi_j$ the right-hand side of \eqref{eq: lower bound for Berman convergence} and note that $\psi_j\in \Ec^1(X,\theta)$. 
Using the fact that  $P_{\theta,\mu}(u)\leq u$ holds $\mu$-a.e. on $X$, one can  check that $\psi_j$ is a pluripotential subsolution of \eqref{eq: Berman convergence}. It thus follows from the pluripotential comparison principle that $\varphi_j\geq \psi_j$. 
\end{proof}

\begin{thm} \label{thm Berman generalized convergence}
Assume that $\mu$ is a non-pluripolar positive measure and $u$ is a bounded Borel measurable function. Then the sequence $(\varphi_j)$ converges in energy to the envelope $P_{\theta,\mu}(u)$. 
\end{thm}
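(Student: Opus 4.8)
The plan is to squeeze the family $(\varphi_j)$ between the explicit subsolutions of Lemma \ref{lem: lower bound} from below and the constraint ``$\varphi_j \le u$ in $\mu$-measure'' from above, deduce quasi-everywhere convergence to $P_{\theta,\mu}(u)$, and then upgrade this to convergence in energy. The decisive input is the total mass identity
\[
\int_X e^{j(\varphi_j - u)}\, d\mu = \MA(\varphi_j)(X) = \vol(\theta) =: V,
\]
which holds because $\varphi_j$ has minimal singularities; I would extract all the quantitative control from it.

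First I would record two uniform estimates. Applying Jensen's inequality to the mass identity gives $\int_X(\varphi_j - u)\,d\mu \le (\log V)/j \to 0$, and since $\mu$ is non-pluripolar this bounds $\sup_X \varphi_j$ from above uniformly, exactly by the compactness argument of Proposition \ref{pro: modified envelope is well-defined}. For the lower bound, Lemma \ref{lem: lower bound} yields $\varphi_j \ge \psi_j := (1-\tfrac1j)P_{\theta,\mu}(u) + \tfrac1j \phi + \tfrac1j(-n\log j + \inf_X u)$; since a convex combination dominates the minimum, $\psi_j \ge P_\theta(\min(P_{\theta,\mu}(u),\phi)) - C_0$, whose right-hand side is a fixed function $\Phi \in \Ec^1(X,\theta)$ (the $\theta$-psh envelope of the minimum of two finite energy functions). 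Thus the $\varphi_j$ are uniformly bounded above and bounded below by $\Phi \in \Ec^1(X,\theta)$.

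The core step is the upper bound $\psi \le u$ holding $\mu$-a.e., where $\psi := (\limsup_j \varphi_j)^*$. For fixed $\varepsilon>0$ the mass identity forces $\mu(\varphi_j > u + \varepsilon) \le V e^{-j\varepsilon}$, hence $\mu\big(\sup_{j\ge m}\varphi_j > u + \varepsilon\big) \le V e^{-m\varepsilon}/(1-e^{-\varepsilon}) \to 0$. Because $\mu$ charges no pluripolar set, the usc regularizations $\big(\sup_{j\ge m}\varphi_j\big)^*$ still lie below $u+\varepsilon$ off sets of vanishing $\mu$-mass and decrease to $\psi$, so $\mu(\psi > u+\varepsilon)=0$; letting $\varepsilon \downarrow 0$ gives $\psi \le u$ $\mu$-a.e. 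By Definition \ref{def:modified} this means $\psi \le P_{\theta,\mu}(u)$. Conversely $\psi_j \to P_{\theta,\mu}(u)$ quasi-everywhere forces $\liminf_j \varphi_j \ge P_{\theta,\mu}(u)$, and combining the two inequalities, $P_{\theta,\mu}(u) \le \liminf_j \varphi_j \le \limsup_j\varphi_j \le (\limsup_j\varphi_j)^* = \psi = P_{\theta,\mu}(u)$ quasi-everywhere. Therefore $\varphi_j \to P_{\theta,\mu}(u)$ quasi-everywhere, hence in $L^1(X)$.

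Finally I would promote this to convergence in capacity and then in energy. Writing the lower bound as $\varphi_j \ge (1-\tfrac1j)P_{\theta,\mu}(u) + \tfrac1j \phi + o(1)$, one checks that $\{\varphi_j < P_{\theta,\mu}(u) - \delta\} \subset \{\phi < P_{\theta,\mu}(u) - j\delta/2\}$ for $j$ large, and the latter has vanishing capacity since $P_{\theta,\mu}(u)$ has minimal singularities (so is bounded above) and $\phi \in \Ec^1(X,\theta)$; together with the $L^1$-convergence this gives convergence in capacity, by the mechanism of Lemma \ref{lem: easy convergence in cap}. Then, since $\varphi_j \to P_{\theta,\mu}(u)$ in capacity with $\varphi_j \ge \Phi \in \Ec^1(X,\theta)$, Proposition \ref{lem: capacity implies energy} (with $p=1$) yields $I_1(\varphi_j, P_{\theta,\mu}(u)) \to 0$, i.e. convergence in energy. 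I expect the main obstacle to be precisely this passage from the in-measure control to genuine pluripotential convergence: since $u$ is only bounded Borel and $\mu$ only non-pluripolar, one must argue entirely with quasi-everywhere and $\mu$-almost-everywhere statements and carefully control the usc regularizations against $\mu$, which is exactly the place where Berman's smooth argument has to be replaced.
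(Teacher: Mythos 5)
Your proposal follows the same route as the paper's proof: the reduction of energy convergence to $L^1$/capacity convergence via Lemma \ref{lem: lower bound}, Lemma \ref{lem: easy convergence in cap} and Proposition \ref{lem: capacity implies energy}, the exponential Chebyshev estimate $\mu(\varphi_j>u+\varepsilon)\le \vol(\theta)e^{-j\varepsilon}$, and the use of non-pluripolarity of $\mu$ to pass from sups to their usc regularizations are exactly the paper's ingredients; your $\limsup/\liminf$ sandwich merely repackages the paper's cluster-point argument. There is, however, one step that fails as written: the uniform upper bound on $\sup_X\varphi_j$.

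Jensen's inequality does give $\int_X(\varphi_j-u)\,d\mu\le j^{-1}\log \vol(\theta)$, but for an arbitrary non-pluripolar measure $\mu$ an upper bound on $\int_X\varphi_j\,d\mu$ does \emph{not} bound $\sup_X\varphi_j$, and it is not what the compactness argument of Proposition \ref{pro: modified envelope is well-defined} consumes: that argument requires a fixed level $b$ and a constant $c_1>0$ with $\mu(\varphi_j<b)\ge c_1$ for all $j$. Such a bound cannot be extracted from the integral bound alone, because a $\theta$-psh function may exceed any level on a set of nearly full $\mu$-measure while compensating with a deep negative dip on a small set. Concretely, if some $\theta$-psh function fails to be $\mu$-integrable (take $X=\C\mathbb{P}^1$, $\theta=\omega_{FS}$, and $\mu=f\,dV$ a probability measure with $\int \log|z|\, f\,dV=-\infty$), then the $\omega_{FS}$-psh functions $\chi_k:=\max\bigl(\log\tfrac{|z|}{\sqrt{1+|z|^2}},-k\bigr)+a_k$, with $a_k$ chosen so that $\int_X\chi_k\,d\mu=0$, satisfy the integral bound while $\sup_X\chi_k=a_k\to+\infty$. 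The repair is immediate and uses the very estimate you derive in your core step: Chebyshev applied to the mass identity gives $\mu(\varphi_j>u+1)\le \vol(\theta)\,e^{-j}$, hence $\mu\bigl(\varphi_j<\sup_Xu+1\bigr)\ge\mu(X)-\vol(\theta)e^{-j}\ge\tfrac12\mu(X)$ for $j$ large, and this is precisely the hypothesis under which the capacity argument of Proposition \ref{pro: modified envelope is well-defined} yields $\sup_X\varphi_j\le C$ uniformly (the finitely many remaining indices are harmless, each $\varphi_j$ being usc on the compact $X$, hence bounded above). For comparison, the paper obtains the same bound differently, by contradiction: normalizing $\psi_j:=\varphi_j-\sup_X\varphi_j$, extracting an $L^1$-limit and using Hartogs-type arguments to see that $\{\sup_j\varphi_j<+\infty\}$ would be pluripolar, which contradicts the same exponential decay of $\mu(\varphi_k>2\sup_Xu)$. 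With this one correction, your argument is complete; note also that your detour through $P_{\theta}(\min(P_{\theta,\mu}(u),\phi))$ is unnecessary, since $\phi-C$ already serves as the fixed $\mathcal{E}^1$ minorant once $\phi$ is normalized to be non-positive.
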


\begin{proof}
In view of Lemma \ref{lem: capacity implies energy}, Lemma \ref{lem: lower bound} and Lemma \ref{lem: easy convergence in cap} it suffices to prove 
that $(\f_j)$ converges to $P_{\theta,\mu}(u)$ in $L^1$.

We claim that the sequence $(\sup_X \varphi_j)$ is  bounded. Indeed, assume this is not the case. After extracting and relabelling we can assume that $\sup_X \varphi_j \nearrow +\infty$. The sequence  $\psi_j:= \varphi_j-\sup_X \varphi_j$ is contained in a compact set of $L^1(X,\omega^n)$.  We can thus extract a subsequence, still denoted by $(\psi_j)$ that converges to $\psi\in \psh(X,\theta)$ in $L^1(X,\omega^n)$. The set 
\[
P:= \{x\in X : \sup_j\varphi_j(x) <+\infty\}
\]
is contained in $\{\psi=-\infty\}$, hence it is pluripolar.  

By assumption on $u$ there exists $s>0$ such that  $u\leq s$ on $X$. Consider 
\[
A_j:= \{x\in X : \varphi_j > 2s\}. 
\] 
As $u\leq s$ in $X$, using \eqref{eq: Berman convergence} we obtain $\mu(A_j) \leq e^{-js}\vol(\{\theta\})$. Thus for $j$ large enough, 
\[
\mu\left(\bigcup_{k\geq j} A_k\right) \leq \frac{e^{-sj}\vol(\{\theta\})}{1-e^{-s}}<\mu(X).
\]

Now the complement of $\bigcup_{k\geq j} A_k$ in $X$ is contained in $P$, a pluripolar set which is negligible with respect to $\mu$, a contradiction. Thus the claim is proved, i.e. $\sup_X \varphi_j$ is bounded.

It follows now from compactness properties of $\theta$-psh functions (see \cite{GZ05})
that the sequence $(\varphi_j)$ is relatively compact in $L^1$. It suffices to prove that any cluster point of this sequence coincides with $P_{\theta,\mu}(u)$. 
Let $\varphi$ be such a cluster point. Extracting   and relabelling we can assume that $\varphi_j$ converges in $L^1$ to $\varphi$.  It follows from Lemma  \ref{lem: lower bound}  that $\varphi\geq P_{\theta,\mu}(u)$. Consider 
\[
\tilde{\varphi}_j:= \left(\sup_{k\geq j} \varphi_j \right)^*. 
\]
Then $\tilde{\varphi}_j$ decreases pointwise to $\varphi$. Fix $\vep>0$ and set 
\[
U_j:= \{x\in X : \varphi_j(x)>u(x)+\vep\}\ ; \ \tilde{U}_j:=  \{x\in X : \tilde{\varphi}_j(x)>u(x)+\vep\}.
\]
As negligible sets are also pluripolar and $\mu$ does not charge these sets we get
\[
\mu(\tilde{U}_j) \leq \sum_{k\geq j} \mu(U_k) \leq \frac{\vol(\{\theta\}) e^{-j\vep}}{1-e^{-\vep}}.
\]
Thus $\mu(U)=0$ where $U:=\cap_{j\in \N^*} \tilde{U}_j\supset \{\varphi>u+\vep\}$. 
Letting $\vep\to 0$ we obtain $\mu(\varphi>u)=0$ hence $\varphi \leq P_{\theta,\mu}(u)$, finishing the proof. 
\end{proof}

\section{Viscosity vs pluripotential supersolutions} \label{sec:viscosity}

Let $(X,\omega)$ be a compact K\"ahler manifold of dimension $n$ and fix $\theta$ a  smooth closed $(1,1)$-form on $X$ which represents a big cohomology class. 

\subsection{Background on viscosity solutions}
Let $f:X \rightarrow \R^+$ be a continuous function.

\begin{defi}
Let $u: X\rightarrow \mathbb{R}\cup \{-\infty\}$ be an upper semicontinuous function.
An upper test function  for $u$ at $x_0$ is a function $q:V_{x_0} \rightarrow \R$ defined 
in a neighborhood $V_{x_0}$ of $x_0$ such that
$u\leq q$ in  $V_{x_0}$ with  $u(x_0)=q(x_0)$. 
\end{defi}

One can define similarly lower tests.
Upper test functions are used to define the notion of viscosity subsolutions:

\begin{defi}
We say that the inequality 
\[
(\theta +dd^c u)^n \geq e^{u} fdV 
\]
holds in the viscosity sense in an open subset $U\subset X$ if $u$ is finite on $U$ and if for any $x_0\in U$ and any $\mathcal{C}^2$ upper test function $q$ of $u$ at $x_0$
 the inequality $(\theta+dd^c q)^n\geq e^{q}fdV$ holds at $x_0$. 
 \end{defi}
 We shall equivalently say that $u$ is a viscosity subsolution of the equation
 $(\theta + dd^c u)^n = e^\f f d V$.
 
 The notion of viscosity supersolution is defined similarly with a subtle twist that we emphasize now:

\begin{defi}
We say that the inequality 
\[
(\theta +dd^c u)^n \leq e^{u} fdV 
\]
holds in the viscosity sense in an open subset $U\subset X$ if $u$ is finite on $U$ and if for any $x_0\in U$ and any $\mathcal{C}^2$ 
lower test function $q$ of $u$ at $x_0$
 the inequality 
 $$
 (\theta+dd^c q)_+^n \leq e^{q}fdV
 $$
  holds at $x_0$. 
 \end{defi}
 
 Here $\alpha_+=\alpha$ if the $(1,1)$-form $\alpha$ is semipositive and $\alpha_+=0$ otherwise. 

\begin{defi}
 A {\it viscosity solution} is a function that is both a viscosity subsolution and a viscosity supersolution.
 \end{defi}

 We refer the reader to \cite{EGZ11,Wang12,Ze13} for basic properties of viscosity sub/super-solutions to 
 degenerate complex Monge-Amp\`ere equations. We stress that viscosity subsolutions are $\theta$-psh \cite{EGZ11} and admit upper tests at almost every point by \cite{DD16}. The analogous properties for viscosity supersolutions are far less obvious.

\subsection{The global context}
 Consider the following Monge-Amp\`ere equation
 \begin{equation}\label{eq: viscosity super-solution vs pluripotential 1}
 (\theta+dd^c u)^n = e^u f\omega^n,
 \end{equation}
 where $f$ is a non-negative continuous function on $X$.
 
 \subsubsection{Envelope of viscosity supersolutions}
 
 \begin{thm}\label{thm: visco super vs pluri compact}
If $u$ is a viscosity super-solution of \eqref{eq: viscosity super-solution vs pluripotential 1} then $P_{\theta}(u)$ is a pluripotential super-solution of \eqref{eq: viscosity super-solution vs pluripotential 1}. 
 \end{thm}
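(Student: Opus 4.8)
The plan is to realize $\hat u:=P_\theta(u)$ as a limit of genuine pluripotential solutions of Berman's auxiliary Monge--Amp\`ere equations, and to convert the pointwise viscosity supersolution inequality into a bound on their Monge--Amp\`ere masses that survives in the limit.

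I would first isolate the part that uses no viscosity at all. A viscosity supersolution is lower semicontinuous and, in this compact setting, bounded, hence quasi-lsc; so $\hat u$ is a $\theta$-psh function with minimal singularities and Proposition~\ref{prop: ort} applies: $\MA(\hat u)$ puts no mass on $\{\hat u<u\}$, i.e. it is carried by the contact set $\{\hat u=u\}$. Since $e^{\hat u}=e^{u}$ on that set, proving $\MA(\hat u)\le e^{u}f\,\omega^n$ as measures is equivalent to the desired $\MA(\hat u)\le e^{\hat u}f\,\omega^n$.

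Next I would run the approximation of Section~\ref{sect: approximation} with $\mu:=f\omega^n$ (if $f\equiv0$ the claim reduces to the maximality $\MA(\hat u)=0$, treated separately; and one may first replace $f$ by $f+\varepsilon$, which keeps $u$ a supersolution and the density positive, then let $\varepsilon\to0$). Let $\varphi_j$ solve \eqref{eq: Berman convergence}, $\MA(\varphi_j)=e^{j(\varphi_j-u)}\mu$. Theorem~\ref{thm Berman generalized convergence} gives convergence in energy, hence in capacity, of $(\varphi_j)$ to the $\mu$-envelope $P_{\theta,\mu}(u)$, and Lemma~\ref{lem: plurifine convergence} yields $\MA(\varphi_j)\rightharpoonup\MA(P_{\theta,\mu}(u))$. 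Here one must rule out the discrepancy between $P_{\theta,\mu}(u)$ and $\hat u=P_\theta(u)$ exhibited in Example~\ref{ex: Berman convergence does not hold for general data}: I would show that the viscosity supersolution hypothesis precisely forbids $u$ from dropping below a $\theta$-psh competitor on the non-pluripolar, $\mu$-negligible locus $\{f=0\}$, so that the two envelopes agree and $\varphi_j\to\hat u$.

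The crux is to estimate the density $e^{j(\varphi_j-u)}$ from the viscosity inequality. At a point $x_j$ where $\varphi_j-u$ attains its maximum $M_j:=\sup_X(\varphi_j-u)$, the $\theta$-psh function $\varphi_j-M_j$ touches $u$ from below at $x_j$; approximating it there by $\mathcal{C}^2$ lower test functions (via the Jensen--Ishii lemma) and using $(\theta+dd^cq)_+^n\le e^{q}f\,dV$ together with $\theta+dd^c\varphi_j\ge0$ and the equation yields $e^{jM_j}\le e^{u(x_j)}$, hence $M_j\le \sup_X u/j\to0$ and $\varphi_j\le u+o(1)$. The main obstacle is to upgrade this single global inequality to the sharp density estimate $\limsup_j\MA(\varphi_j)\le e^{u}\mu$ on the contact set: the two-sided control $\hat u-o(1)\le\varphi_j\le u+o(1)$ coming from Lemma~\ref{lem: lower bound} only bounds the density by a constant, so one genuinely needs to localize the maximum-principle argument near each contact point and exploit twice-differentiability almost everywhere, in order to match the pointwise viscosity inequality with the Bedford--Taylor measure of the non-smooth $\varphi_j$. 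Granting this estimate, the weak convergence $\MA(\varphi_j)\rightharpoonup\MA(\hat u)$ gives $\MA(\hat u)\le e^{u}f\,\omega^n$, which combined with the first step shows that $\hat u$ is a pluripotential supersolution of \eqref{eq: viscosity super-solution vs pluripotential 1}.
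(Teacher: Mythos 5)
Your overall strategy (Berman's approximation scheme plus a comparison argument to force the approximants below $u$) is the same as the paper's, and your opening reduction via Proposition~\ref{prop: ort} is correct. But the proposal has a genuine gap, and you acknowledge it yourself: the entire theorem hinges on the ``sharp density estimate'' $\limsup_j \MA(\varphi_j)\leq e^u f\omega^n$, and you only say what would be needed to prove it (``localize the maximum-principle argument\dots exploit twice-differentiability almost everywhere\dots Granting this estimate''). This is not a technical footnote; it is the heart of the matter, and your route to it is blocked twice over. First, your global maximum-principle step (Jensen--Ishii at a maximum point of $\varphi_j-u$) already runs into the problem that the density $e^{-ju}f$ in \eqref{eq: Berman convergence} is only upper semicontinuous ($u$ is merely lsc), so the standard viscosity comparison machinery does not apply directly; this is precisely the difficulty the paper's Step 1 is designed to circumvent, by approximating $u$ from below by continuous functions $u_j$, applying the viscosity comparison principle of \cite{EGZ16} to each continuous-data equation \eqref{eq: viscosity super-solution vs pluripotential 3}, and then using the pluripotential comparison principle to pass to the limit in $j$. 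Second, even granting $\sup_X(\varphi_j-u)\leq \sup_X u/j$, your auxiliary equation only yields $\MA(\varphi_j)\leq e^{\sup_X u}f\omega^n$, a constant bound that, as you note, does not improve to $e^u f\omega^n$ in the limit. Similarly, the identification $P_{\theta,\mu}(u)=P_{\theta}(u)$ is asserted but not proved; in the paper it is not a separate input but a consequence of the pointwise bound $\varphi_\beta\leq u$ on $\Amp(\theta)$ together with the domination principle (Proposition~\ref{prop: domination principle}) --- and proving it independently is essentially as hard as the theorem itself (compare Proposition~\ref{prop: two envelopes coincide}, whose proof uses this very theorem's method).

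The fix is a different choice of auxiliary equation. The paper does not use \eqref{eq: Berman convergence} but the twisted equation \eqref{eq: viscosity super-solution vs pluripotential 2},
\[
(\theta+dd^c\varphi_\beta)^n=e^{\beta(\varphi_\beta-u)}\,e^{\varphi_\beta}f\omega^n ,
\]
with the extra zeroth-order factor $e^{\varphi_\beta}$. With this choice the single inequality $\varphi_\beta\leq u$ does all the work at once: it shows each $\varphi_\beta$ is \emph{already} a pluripotential supersolution, since $\MA(\varphi_\beta)=e^{\beta(\varphi_\beta-u)}e^{\varphi_\beta}f\omega^n\leq e^{\varphi_\beta}f\omega^n$; it makes $\beta\mapsto\varphi_\beta$ increasing; and it forces the monotone limit to equal $P_\theta(u)$ (via Lemma~\ref{lem: plurifine convergence} and the domination principle), with no discrepancy with the $\mu$-envelope to rule out. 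The conclusion then follows from continuity of the Monge--Amp\`ere operator along monotone sequences --- no sharp density estimate at contact points, no Jensen--Ishii, and no a.e.\ second-order differentiability are needed. In short: your reduction via Proposition~\ref{prop: ort} is sound but unnecessary, while the step you leave unproven is exactly what the missing factor $e^{\varphi_\beta}$ in the auxiliary equation would have made automatic.
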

 
This connection has been observed in \cite[Lemma 4.7.3]{EGZ11} when $u$ is ${\mathcal C}^2$-smooth.
The key idea of the proof given here is to approximate the Monge-Amp\`ere envelope $P_{\theta}(u)$ 
as in Theorem \ref{thm Berman generalized convergence}.
 
 \begin{proof}
For each $\beta>0$ let $\varphi_{\beta}$ be the unique $\theta$-psh function with minimal singularities such that 
 \begin{equation}\label{eq: viscosity super-solution vs pluripotential 2}
 (\theta +dd^c \varphi_{\beta})^n = e^{\beta(\varphi_{\beta}-u)} e^{\varphi_{\beta}} f\omega^n
 \end{equation}
 holds in the pluripotential sense.  The existence and uniqueness of the solution $\varphi_{\beta}$ with minimal singularities follows from the main result of \cite{BEGZ10, BBGZ13}. 
 As shown in \cite{EGZ11,EGZ16} the equation \eqref{eq: viscosity super-solution vs pluripotential 2} holds in the viscosity sense in 
$\Omega:=\Amp(\theta)$  (the ample locus of $\{\theta\}$) as well. 
 
 \smallskip
 
 {\it Step 1.
  We claim that $\varphi_{\beta}\leq u$ in $\Omega$, for all $\beta>0$}. 
 Recall that $u$ is a viscosity super-solution of \eqref{eq: viscosity super-solution vs pluripotential 1}. 
 Thus the claim would follow if we could apply the viscosity comparison principle \cite{EGZ11,EGZ16}. However, the density in the second term of \eqref{eq: viscosity super-solution vs pluripotential 2}, $e^{-\beta u} f$, is not continuous in $\Omega$,
 hence one cannot directly apply the results from \cite{EGZ11,EGZ16}.

  To prove the claim we proceed by approximation. Fix $\beta>0$ and let $(u_j)$ be an increasing sequence of continuous functions converging to $u$. 
  Such a sequence exists because $u$ is lower semicontinuous.  For each $j$ let $\varphi_{\beta,j}$ be the unique $\theta$-psh function with minimal singularities such that
  \begin{equation}\label{eq: viscosity super-solution vs pluripotential 3}
 (\theta +dd^c \varphi_{\beta,j})^n = e^{\beta(\varphi_{\beta,j}-u_j)} e^{\varphi_{\beta,j}} f\omega^n.
 \end{equation}
 As $u_j\leq u$  one can  check that $u$ is a viscosity super-solution of \eqref{eq: viscosity super-solution vs pluripotential 3}. 
  Indeed 
 $$
 (\theta+dd^c u)_+^n \leq e^{u} f \omega^n  \leq e^{\beta(u-u_j)} e^{u} f \omega^n.
 $$
 Moreover, the density function $e^{-\beta u_j} f$ is continuous on $X$. Hence by the viscosity comparison principle \cite{EGZ16} it follows that $\varphi_{\beta,j}\leq u$ in $\Omega$. For $j>k$, as $u_j\geq u_k$ we have 
 \[
 (\theta +dd^c \varphi_{\beta,k})^n = e^{\beta(\varphi_{\beta,k}-u_k)} e^{\varphi_{\beta,k}} f\omega^n \geq e^{\beta(\varphi_{\beta,k}-u_j)} e^{\varphi_{\beta,k}} f\omega^n 
 \]
 in the pluripotential sense. In other words, $\varphi_{\beta,k}$ is a pluripotential subsolution of \eqref{eq: viscosity super-solution vs pluripotential 3}.
It follows therefore from the pluripotential comparison principle (Proposition \ref{prop: comparison principle exponential}) that  $j \mapsto \varphi_{\beta,j}$ is increasing. 
  The increasing limit   is a  $\theta$-psh function with minimal singularities which solves the equation  \eqref{eq: viscosity super-solution vs pluripotential 2}
  (recall that the Monge-Amp\`ere operator is continuous along monotonous sequences). By uniqueness it follows that this limit is $\varphi_{\beta}$. 
 This proves the claim since  $\varphi_{\beta,j}\leq u$ in $\Omega$.
 
 \smallskip
 
{\it Step 2. We now claim that $\f_{\beta}$ increases  towards $P(u)$, as $\beta \rightarrow +\infty$.}
Since $\f_{\beta} \leq u$, we observe that $\f_{\gamma}$ is a subsolution to 
$(\ref{eq: viscosity super-solution vs pluripotential 2})_{\beta}$ if $\gamma \leq \beta$, hence
 $\beta \mapsto \varphi_{\beta}$ is increasing.
 It  converges   almost everywhere to 
some function $\varphi\in \psh(X,\theta)$ with minimal singularities such that $\f \leq u$ in $\Omega$.
Since $X \setminus \Omega$ is pluripolar we infer $\varphi\leq P_{\theta}(u)$ 
on $X$.
 
We now show that $\varphi \geq P_{\theta}(u)$.  Using the domination principle 
it suffices to prove that  $\MA(\varphi)$ vanishes in $\{\varphi <P_{\theta}(u)\}$.
Fix $\delta>0$. Using \eqref{eq: viscosity super-solution vs pluripotential 2} and observing that
$$
\{\varphi<P(u) -\delta\} \subset \{\varphi_{\beta} <u -\delta\},
$$
we obtain
 \[
\int_{\{\varphi<P(u) -\delta\}}\MA(\varphi_{\beta})   \leq e^{-\beta \delta} \int_X e^{P(u)}fdV, \ \forall \beta>1.
\] 
Since $MA(\varphi_{\beta})$ weakly converges  to $MA(\varphi)$ as $\beta\to +\infty$,
it follows from Lemma \ref{lem: plurifine convergence} that
 \[
\int_{\{\varphi<P(u) -\delta\}} \MA(\varphi) \leq \liminf_{\beta\to +\infty} \int_{\{\varphi<P(u) -\delta\}} \MA(\varphi_{\beta}) =0.
\] 
Letting $\delta \to 0$ insures that $\MA(\varphi)$   vanishes in $\{\varphi<P(u)\}$. 

\smallskip

{\it Conclusion.} Recall  that 
\[
(\theta + dd^c \varphi_{\beta})^n \leq e^{\varphi_{\beta}}f\omega^n
\]
in the pluripotential sense. Since $\varphi_{\beta}$ increases to $P_{\theta}(u)$,   the continuity of the Monge-Amp\`ere operator
along monotonous sequences insures that
\[
(\theta + dd^c P(u))^n \leq e^{P(u)}f\omega^n
\]
in the pluripotential sense, as desired.
 \end{proof}

 \begin{rem}
 Recall that a partial converse to this implication has been given in \cite[Lemma 4.7]{EGZ11}:
 if $\p$ is a bounded pluripotential supersolution, then its lower semi-continuous regularization $\p_*$ is
 a viscosity super-solution.
 \end{rem}

 \subsubsection{More general RHS}
 We consider the following generalization. 

\begin{thm}
	\label{thm: supersolution general right hand side} 
	Assume that $F: X \times \R \rightarrow \R^+$ is a continuous function which is non-decreasing in $t \in \R$. Let $u$ be a viscosity supersolution of the equation 
	\begin{equation}
		\label{eq: viscosity supersolution general right hand side}
		(\theta +dd^c u)^n = F(x,u) \omega^n.
	\end{equation}
	Then the envelope $P_{\theta}(u)$ is a pluripotential supersolution of \eqref{eq: viscosity supersolution general right hand side}.
\end{thm}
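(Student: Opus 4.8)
The plan is to rerun the three-step scheme of the proof of Theorem~\ref{thm: visco super vs pluri compact}, replacing the density $e^t f(x)$ by the general density $F(x,t)$. Being lower semicontinuous on the compact manifold $X$, the viscosity supersolution $u$ is bounded below, so $P_\theta(u)$ is a well-defined $\theta$-psh function with minimal singularities. For each $\beta>0$ I would introduce the auxiliary equation
\begin{equation}
	(\theta+dd^c\varphi_\beta)^n=e^{\beta(\varphi_\beta-u)}\,F(x,\varphi_\beta)\,\omega^n \tag{$*_\beta$}
\end{equation}
and aim to show that $\varphi_\beta\leq u$ on the ample locus $\Omega:=\Amp(\theta)$, that $\beta\mapsto\varphi_\beta$ increases to $P_\theta(u)$, and then pass to the limit.

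The first, and main, obstacle is the solvability of $(*_\beta)$ with minimal singularities: unlike \eqref{eq: viscosity super-solution vs pluripotential 2}, the density $e^{\beta(t-u(x))}F(x,t)$ is not of the pure exponential form $e^{\lambda t}g(x)$ covered by Theorem~\ref{thm: BBGZ13}. I would handle this by a double approximation: pick continuous functions $u_j\nearrow u$ (possible since $u$ is lsc) and, for fixed $\beta,j$, solve
\[
(\theta+dd^c\varphi_{\beta,j})^n=e^{\beta(\varphi_{\beta,j}-u_j)}\,F(x,\varphi_{\beta,j})\,\omega^n .
\]
Here $u_j$ and $F$ are continuous and $t\mapsto e^{\beta(t-u_j(x))}F(x,t)$ is continuous and \emph{strictly} increasing (thanks to the factor $e^{\beta t}$, even though $F$ is only non-decreasing); hence a continuous solution exists either by Perron's method and the viscosity comparison principle of \cite{EGZ11,EGZ16}, or by a monotone iteration built on Theorem~\ref{thm: BBGZ13} and the comparison principle of Proposition~\ref{prop: comparison principle exponential}. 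The same comparison yields uniqueness, and since $j\mapsto e^{\beta(t-u_j)}F(x,t)$ decreases, $j\mapsto\varphi_{\beta,j}$ is non-decreasing; a dominated convergence argument shows its limit solves $(*_\beta)$.

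With $\varphi_\beta$ in hand the two monotonicity steps follow the earlier proof. Since $u_j\leq u$, at a lower test $q$ of $u$ one has $(\theta+dd^c q)_+^n\leq F(x,q)\omega^n\leq e^{\beta(q-u_j)}F(x,q)\omega^n$, so $u$ is a viscosity supersolution of the $j$-th equation; viscosity comparison gives $\varphi_{\beta,j}\leq u$ on $\Omega$, and letting $j\to\infty$ yields $\varphi_\beta\leq u$ on $\Omega$, hence $\varphi_\beta\leq P_\theta(u)$ on $X$ (as $X\setminus\Omega$ is pluripolar). For $\gamma\leq\beta$ the bound $\varphi_\gamma\leq u$ makes $\varphi_\gamma$ a subsolution of $(*_\beta)$, so $\beta\mapsto\varphi_\beta$ increases to some $\varphi\leq P_\theta(u)$ with minimal singularities. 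To prove $\varphi\geq P_\theta(u)$ I would show $\MA(\varphi)$ puts no mass on $\{\varphi<P_\theta(u)\}$: for $\delta>0$ the inclusion $\{\varphi<P_\theta(u)-\delta\}\subset\{\varphi_\beta<u-\delta\}$ and $(*_\beta)$ give
\[
\int_{\{\varphi<P_\theta(u)-\delta\}}\MA(\varphi_\beta)\leq e^{-\beta\delta}\int_X F(x,P_\theta(u))\,\omega^n,
\]
which tends to $0$ as $\beta\to+\infty$, where I used $\varphi_\beta\leq P_\theta(u)$ and the monotonicity of $F$ to bound $F(x,\varphi_\beta)\leq F(x,P_\theta(u))$; the liminf inequality of Lemma~\ref{lem: plurifine convergence} and the domination principle (Proposition~\ref{prop: domination principle}) then force $\varphi=P_\theta(u)$.

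The conclusion is in fact cleaner than in the exponential case. For every $\beta$ one has $e^{\beta(\varphi_\beta-u)}\leq1$ on $\Omega$ and $F(x,\varphi_\beta)\leq F(x,P_\theta(u))$, so $(*_\beta)$ gives $\MA(\varphi_\beta)\leq F(x,P_\theta(u))\,\omega^n$ as measures (both sides ignore the pluripolar set $X\setminus\Omega$). As $\varphi_\beta\nearrow P_\theta(u)$, Lemma~\ref{lem: plurifine convergence} shows $\MA(\varphi_\beta)\to\MA(P_\theta(u))$ weakly, and passing to the weak limit yields $\MA(P_\theta(u))\leq F(x,P_\theta(u))\,\omega^n$, i.e.\ $P_\theta(u)$ is a pluripotential supersolution of \eqref{eq: viscosity supersolution general right hand side}. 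The genuinely delicate point remains the solvability of $(*_\beta)$ for a general continuous non-decreasing $F$, which is exactly where the hypotheses on $F$ and the double approximation are used.
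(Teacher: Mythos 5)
Your overall scheme is exactly the one the paper intends: its ``proof'' of Theorem \ref{thm: supersolution general right hand side} is literally one sentence deferring to the proof of Theorem \ref{thm: visco super vs pluri compact}, and your write-up carries out that adaptation. Several of your adaptations are correct and well chosen: keeping $F(x,t)$ evaluated at the \emph{unknown} in $(*_\beta)$ is precisely what preserves the viscosity supersolution property of $u$ under the approximation $u_j\nearrow u$ (approximating $F(x,u)$ by $F(x,u_j)$ would fail); the monotonicity in $\beta$, the identification of $\lim_\beta\varphi_\beta$ with $P_\theta(u)$ via the domination principle, and the final passage to the limit using $e^{\beta(\varphi_\beta-u)}\le 1$ and $F(x,\varphi_\beta)\le F(x,P_\theta(u))$ are all sound.

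The genuine gap is the one you yourself single out as the crux, and neither of your remedies closes it. First, the ``monotone iteration built on Theorem \ref{thm: BBGZ13} and Proposition \ref{prop: comparison principle exponential}'' is not monotone: the only scheme those results allow is to freeze $F$ at the previous iterate and solve $\MA(\varphi^{(k+1)})=e^{\beta(\varphi^{(k+1)}-u_j)}F(x,\varphi^{(k)})\omega^n$; since $F$ is non-decreasing, a larger iterate gives a larger measure, and by Proposition \ref{prop: comparison principle exponential} a larger measure gives a \emph{smaller} solution. The map is order-reversing, so the iterates oscillate rather than converge monotonically; forcing monotonicity by rewriting the density as $e^{(\beta+A)t}\bigl[e^{-At}e^{-\beta u_j}F(x,t)\bigr]$ requires $t\mapsto e^{-At}F(x,t)$ to be non-increasing for some finite $A$, i.e.\ a Lipschitz-type control on $F$ which a merely continuous non-decreasing $F$ need not satisfy (try $F(x,t)=\sqrt{\max(t,0)}$ near $t=0$). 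Second, your assertion that the density is strictly increasing ``thanks to $e^{\beta t}$'' is false wherever $F$ vanishes: if $F(x,t)=0$ for all $t\le T_0$, the measures $F(\cdot,\varphi^{(k)})\omega^n$ can be identically zero, the iteration is then not even defined, and Theorem \ref{thm: BBGZ13} (which needs a nonzero non-pluripolar measure and a right-hand side of the exact form $e^{\lambda\varphi}\mu$) gives nothing. Third, both the Perron route and your Step 1 invoke ``the viscosity comparison principle of \cite{EGZ11,EGZ16}'', but those comparison principles are stated for right-hand sides of the product form $e^{\gamma t}f(x)$ with $f$ continuous; this product structure is exactly why Theorem \ref{thm: visco super vs pluri compact} works for $F(x,t)=e^tf(x)$, and it is destroyed for general $F$, where the density $e^{\beta(t-u_j(x))}F(x,t)$ cannot be written as $e^{\gamma t}\times(\hbox{continuous function of }x)$. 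At best the proofs of \cite{EGZ11,EGZ16} extend, but that must be argued, not cited.

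A workable repair runs as follows: replace $F$ first by $F_\delta(x,t)=F(x,t)+\delta e^{t}$ and then by Lipschitz-in-$t$ regularizations from above (sup-convolutions in $t$, after truncating $F$ for $t\ge \sup_X u+1$). Both operations increase the right-hand side, so $u$ remains a viscosity supersolution, and the desired inequality $\MA(P_\theta(u))\le F(x,P_\theta(u))\,\omega^n$ survives the limits $\delta\to 0$ and the Lipschitz parameter tending to infinity. For a positive, Lipschitz-in-$t$ nonlinearity the rewritten iteration above is genuinely monotone, existence follows, and the pluripotential and viscosity comparison arguments extend (the pluripotential one by a short partition argument on $\{F(\cdot,\varphi)>0\}$ combined with the domination principle). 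Without some such reduction, the existence and uniqueness of your $\varphi_{\beta,j}$ remain unproved, so the proof as written is incomplete at the step on which everything else depends.
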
 

\begin{proof}
	The proof, similar to that of Theorem \ref{thm: visco super vs pluri compact}, is left to the reader. 
\end{proof}

 \subsubsection{Continuity of envelopes}

Let  $\theta$ be a semi-positive and big form on $X$.
We want to investigate  conditions under which the envelope $P_{\theta}(u)$ is continuous in $X$ when $h$ is continuous in $X$. 
We say that 
{\it $(X,\theta)$ satisfies the approximation property $(AP)$} if any $\f \in \psh (X,\theta)$ can be approximated by a decreasing sequence of continuous $\theta$-psh functions.
 
 \begin{thm} 
 The following properties are equivalent
 
 $(i)$  For any $h \in C^0 (X)$, its envelope $P_{\theta}(h)$ is continuous on $X$;
 
 $(ii)$ $(X,\theta)$ satisfies the approximation property $(AP)$;
 
 $(iii)$ For any density $0 \leq f \in L^{\infty} (X,\R)$ with $\int_X f \theta^n = \int_X \theta^n$, the unique solution to the complex Monge-Amp\`ere equation
 \begin{equation} \label{eq:MA}
  (\theta + dd^c \f)^n = f d V,   \, \, \max_X \f = 0,
 \end{equation}
 is continuous on $X$. 

 \end{thm}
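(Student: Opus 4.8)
The plan is to prove the equivalence $(i) \Leftrightarrow (ii) \Leftrightarrow (iii)$ by establishing a cycle of implications, say $(i) \Rightarrow (ii) \Rightarrow (iii) \Rightarrow (i)$, exploiting at each stage the convergence results developed earlier in the paper (particularly Theorem \ref{thm Berman generalized convergence}) and the basic properties of envelopes from Proposition \ref{prop: basic property of envelope}.

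\smallskip

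\noindent \emph{Step $(i) \Rightarrow (ii)$.} Given $\f \in \psh(X,\theta)$, I would approximate it from above by a decreasing sequence of \emph{continuous} functions $h_j$ on $X$ (for instance by regularizing $\f$; since $\f$ is quasi-psh and hence quasi-continuous, one can find continuous $h_j \searrow \f$ pointwise quasi-everywhere, or work with $h_j = \max(\f, -j)$ suitably mollified). By the monotonicity property (part 2 of Proposition \ref{prop: basic property of envelope}), $P_\theta(h_j)$ decreases to $P_\theta(\f) = \f$ (the last equality because $\f$ is itself $\theta$-psh and $\leq h_j$). By hypothesis $(i)$, each $P_\theta(h_j)$ is continuous, giving the required decreasing sequence of continuous $\theta$-psh approximants. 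Thus $(AP)$ holds.

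\smallskip

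\noindent \emph{Step $(ii) \Rightarrow (iii)$.} Let $f \in L^\infty(X)$ with $\int_X f\,\theta^n = \int_X \theta^n$ and let $\f$ be the solution to \eqref{eq:MA}. The idea is to use the approximation property to produce continuous functions trapping $\f$. Since $\f \in \psh(X,\theta)$, property $(AP)$ gives continuous $\theta$-psh functions $\f_k \searrow \f$. One then compares $\f$ with solutions of nearby equations with continuous data: approximate the density $f$ in an appropriate (e.g. capacity or $L^1$) sense by continuous densities $f_k$ and use the stability estimates for the complex Monge-Amp\`ere operator (as in Ko\l{}odziej-type estimates, or the convergence in capacity results of Section \ref{sec: preliminaries}) to show the corresponding solutions converge uniformly to $\f$. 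The semipositivity and bigness of $\theta$, together with the $L^\infty$ bound on $f$, ensure the solution has minimal singularities and is bounded, so uniform convergence of continuous approximants yields continuity of $\f$.

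\smallskip

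\noindent \emph{Step $(iii) \Rightarrow (i)$.} This is where I expect the main work, and it is here that Berman's convergence method (Theorem \ref{thm Berman generalized convergence}) is decisive. Given $h \in C^0(X)$, set $\mu = \omega^n$ (a volume form) and let $\f_j \in \Ec^1(X,\theta)$ solve $\MA(\f_j) = e^{j(\f_j - h)}\omega^n$. By Theorem \ref{thm Berman generalized convergence} (together with Proposition \ref{prop: modified envelope}, which identifies $P_{\theta,\mu}(h)$ with $P_\theta(h)$ for $h$ continuous and $\mu$ a volume form), the sequence $\f_j$ converges in energy, hence in capacity, to $P_\theta(h)$. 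The key point is to rewrite each equation in the form \eqref{eq:MA}: the density $f_j := e^{j(\f_j - h)}$ is bounded (since $\f_j \leq h$ up to normalization and is itself bounded with minimal singularities) with the correct total mass, so by hypothesis $(iii)$ each $\f_j$ is continuous. The hard part will be upgrading the convergence $\f_j \to P_\theta(h)$ from convergence in capacity/energy to \emph{uniform} convergence, since a capacity limit of continuous functions need not be continuous. I would handle this by deriving a two-sided $L^\infty$ control: the lower bound comes from Lemma \ref{lem: lower bound} (which gives $\f_j \geq (1 - 1/j)P_\theta(h) + (\text{explicit } o(1) \text{ terms})$), while the upper bound $\f_j \leq P_\theta(h)$ (up to vanishing error) follows from the fact that $\f_j \leq h$ forces $\f_j \leq P_\theta(h)$ via the envelope's defining property. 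Combining these sandwiching estimates should promote the convergence to uniform convergence on $X$, whence $P_\theta(h)$ is a uniform limit of the continuous functions $\f_j$ and is therefore continuous, establishing $(i)$ and closing the cycle.
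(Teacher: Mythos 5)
Your steps $(i)\Rightarrow(ii)$ and $(ii)\Rightarrow(iii)$ are essentially the paper's: the first is exactly the argument used there (approximate $\f\in\psh(X,\theta)$ from above by a decreasing sequence of smooth functions and apply $(i)$), and the second is precisely the result of \cite{EGZ09} that the paper simply invokes, so your sketch, though loose, points at the right mechanism. The genuine gap is in $(iii)\Rightarrow(i)$. Your sandwich argument rests on the claim that $\f_j\le h$, presented as a fact; it is not one, and it fails whenever $h$ is not (a.e.\ equal to) a $\theta$-psh function. Indeed, take $\theta=\omega$ K\"ahler: if $\f_j\le h$ everywhere, then $\MA(\f_j)=e^{j(\f_j-h)}\omega^n\le\omega^n$, while both measures have the same total mass $\int_X\omega^n$; equality of masses forces $e^{j(\f_j-h)}=1$ a.e., i.e.\ $\f_j=h$ a.e., so $h$ would be $\omega$-psh. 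In Theorem \ref{thm: visco super vs pluri compact} the analogous inequality $\f_\beta\le u$ is obtained from the viscosity comparison principle and uses crucially that $u$ is a viscosity \emph{supersolution}; here $h$ is an arbitrary continuous function and no such comparison is available. What is true (and is how Berman's estimate goes) is only the weaker bound $\sup_X(\f_j-h)\le j^{-1}\log C_h$ with $C_h:=\sup_X\bigl((\theta+dd^ch)_+^n/\omega^n\bigr)$, whose proof requires $h\in\mathcal{C}^2$ and a maximum-principle argument at a maximum point of $\f_j-h$ (using that $\f_j$ is a viscosity subsolution), followed, for merely continuous $h$, by uniform approximation together with the stability $\|\f_j^{h_1}-\f_j^{h_2}\|_{\infty}\le\|h_1-h_2\|_{\infty}$. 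None of this appears in your proposal, and without the upper half of the sandwich the argument collapses: convergence in capacity of the continuous functions $\f_j$ to $P_{\theta}(h)$ gives no information on the continuity of $P_{\theta}(h)$, since a capacity limit of continuous functions can perfectly well be discontinuous. (A smaller point: your lower bound via Lemma \ref{lem: lower bound} is uniform only because $\theta$ is semipositive in this theorem, so that $V_\theta=0$ and the auxiliary solution $\phi$ of $\MA(\phi)=e^{\phi}\omega^n$ is bounded; this should be said.)

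For comparison, the paper's proof of $(iii)\Rightarrow(i)$ sidesteps uniform convergence entirely and uses \ref{thmA} instead of \ref{thmB}: for smooth $h$, the function $h$ is a viscosity supersolution of $(\theta+dd^c\phi)^n=e^{\phi}e^{-h}f\,dV$ where $f\,dV:=(\theta+dd^ch)_+^n$ has continuous density; by \ref{thmA} (Theorem \ref{thm: visco super vs pluri compact}) the envelope $P_{\theta}(h)$ is a pluripotential supersolution of that equation, hence $\MA(P_{\theta}(h))=g\,dV$ with $g\in L^{\infty}(X)$, and $(iii)$ then applies \emph{directly to $P_{\theta}(h)$ itself}, giving its continuity in one stroke; the case $h\in C^0(X)$ follows from $|P_{\theta}(h_j)-P_{\theta}(h)|\le\sup_X|h_j-h|$ with $h_j$ smooth decreasing to $h$. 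In other words, the paper invokes $(iii)$ once, on the envelope, whereas you invoke it infinitely often, on the approximants $\f_j$, and are then left with exactly the uniform-convergence problem that you flagged as ``the hard part'' but did not resolve.
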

 
 The approximation property has been introduced in \cite{EGZ09}
  where it is proved that $(ii) \Longrightarrow (iii)$  holds.
  $(AP)$ is known to hold when $\{ \theta \}$ is a K\"ahler class \cite{Dem92,BK07,Ber13}, or when
  it is a Hodge class on a singular variety \cite{CGZ13}.
 
 \begin{proof} 
$(i) \Longrightarrow (ii)$. Assume that $\f \in \psh(X,\theta)$. Let $(h_j)$ be a sequence of smooth functions decreasing to $\f$ on $X$. Then using $(i)$ we conclude that $(P(h_j))$ is a decreasing sequence of continuous $\theta$-psh functions in $X$ that converges to $\f$.

$(ii) \Longrightarrow (iii)$. This property follows from \cite{EGZ09}.

 $(iii) \Longrightarrow (i)$. Assume first that $h$ is smooth on $X$. Then $(\theta + dd^c h)^n_+ = f d V$, where $f$ is a continuous function on $X$. Then $h$ is a viscosity supersolution to the complex Monge-Amp\`ere equation 
 $$
 (\theta + dd^c \phi)^n = e^{\phi - h} (\theta + dd^c h)_+^n = e^\phi e^{- h} f d V,
 $$
 where $f e^{- h}$ is a continuous density on $X$.
 
 By \ref{thmA}, $P(h)$ is then a pluripotential supersolution of the same equation. Therefore $(\theta + dd^c P(h))^n \leq f d V$, in the weak sense, hence there exists a function $g \in L^{\infty} (X)$ such that $(\theta + dd^c P(h)) = g d V$ weakly on $X$. Hence by $(iii)$, $P(h)$ is continuous on $X$. 
 
 The general case follows by approximation. Let $h \in C^0 (X,\R)$ then approximate $h$ by a decreasing sequence of smooth functions $h_j$ in $X$. Since $|P(h_j)-P(h)|\leq \sup_X |h_j-h|$, it follows that  $P(h_j)$ converges  to $P(h)$ uniformly on $X$, hence $P(h)$ is continuous on $X$.
 \end{proof}

 \subsection{The local context}
 
 Let $D \Subset \C^n$ be a bounded hyperconvex domain. By definition $D$ admits a continuous negative plurisubharmonic exhaution $\rho$.  The domain $D$ is said to be strictly pseudoconvex if the exhaution function $\rho$ can be choosen strictly plurisubharmonic in $D$.  Let $0\leq f$ be a continuous function in $D$ and $d V$ be the Euclidean volume form on $D$. 
 
 For a bounded function $u$ in $D$, the upper envelope $P_{D}(u)$ of $u$  in $D$ is defined by 
 \[
 P_{D}(u) := \left( \sup\{\varphi \in \psh(\Omega) : \varphi \leq u\} \right)^*. 
 \]
 We will also need to consider the following envelope which takes care of the boundary values:
\[
P_{\bar{D}} (u) := (\sup\{\varphi \in \mathrm{PSH}(D)  : \varphi^* \leq u\ \textrm{on } \bar{D}\})^*,
\]
where $\varphi^*$ is the upper semicontinuous extension of $\varphi$ to $\bar{D}$ defined by  
\[
\varphi^*(\xi):= \lim_{r\to 0^+} \sup_{B(\xi,r)\cap D} \varphi, \ \xi \in \partial D. 
\] 
Note that the extension of $\varphi$ to $\bar{D}$ is upper semicontinuous on $\bar{D}$. 

If $u$ is continuous on $\bar{D}$ then $P_{D}(u)=P_{\bar{D}}(u)$ on $D$. This does not hold in general as the example  in Remark \ref{rem:twoenvelopes} below shows.
\medskip

We now state and prove the local version of \ref{thmA} :
 \begin{thm} \label{thm:supersolloc}
 	Let  $D \Subset \C^n$ be a bounded pseudoconvex domain. Assume that a bounded lower semi-continuous function $u$ is a viscosity super-solution of the equation
 	\begin{equation}
 		\label{eq: viscosity supersolution local 1}
 		(dd^c u)^n =  fdV,
 	\end{equation}
 	in $D$. Then $P_{D}(u)$ is a pluripotential super-solution of \eqref{eq: viscosity supersolution local 1} in $D$. 
 \end{thm}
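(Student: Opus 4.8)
The plan is to adapt the approximation scheme used to prove Theorem~\ref{thm: visco super vs pluri compact} to the local setting, replacing the global existence, comparison and domination principles by their local counterparts (Bedford--Taylor theory together with the viscosity comparison principle of \cite{EGZ11} in the Dirichlet setting). Since the pluripotential supersolution property $(dd^c P_D(u))^n \leq f\,dV$ is a local condition on $D$, it suffices to establish it on every ball $B \Subset D$; this has the decisive advantage that $w := P_D(u)$, being a bounded psh function in a neighbourhood of $\bar B$, furnishes genuine (usc) boundary data on $\partial B$.

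Fix such a ball $B$. For each $\beta > 0$ I would solve the Dirichlet problem
$$
(dd^c \varphi_\beta)^n = e^{\beta(\varphi_\beta - u)} f\,dV \ \text{ in } B, \qquad \varphi_\beta = w \ \text{ on } \partial B,
$$
whose solvability follows from the standard local theory. The first step is to show $\varphi_\beta \leq u$ in $B$. As $u$ is lower semicontinuous I would approximate it from below by continuous $u_j \nearrow u$, solve the corresponding problems with the \emph{continuous} densities $e^{-\beta u_j} f$, and apply the viscosity comparison principle (using $\varphi_\beta = w = P_D(u) \leq u$ on $\partial B$) to get $\varphi_{\beta,j} \leq u$; monotone convergence of these solutions then yields $\varphi_\beta \leq u$. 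In particular $e^{\beta(\varphi_\beta - u)} \leq 1$, so that
$$
(dd^c \varphi_\beta)^n \leq f\,dV \quad \text{in } B.
$$

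The second step is to prove that $\varphi_\beta$ increases, as $\beta \to +\infty$, to $w$ on $B$. Monotonicity in $\beta$ is immediate from the comparison principle, because for $\gamma < \beta$ the inequality $\varphi_\gamma \leq u$ makes $\varphi_\gamma$ a subsolution of the $\beta$-equation with the same boundary data. Writing $\varphi := \lim_\beta \varphi_\beta$, the bound $\int_{\{\varphi < w - \delta\}} (dd^c\varphi_\beta)^n \leq e^{-\beta\delta}\int_B f\,dV$ (which uses $\{\varphi < w-\delta\} \subset \{\varphi_\beta < u - \delta\}$) combined with the Bedford--Taylor convergence $(dd^c\varphi_\beta)^n \to (dd^c\varphi)^n$ shows that $(dd^c\varphi)^n$ puts no mass on $\{\varphi < w\}$. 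Since $\varphi$ has boundary values $w$ on $\partial B$, the local domination principle then gives $\varphi \geq w$ on $B$; gluing $\varphi$ with $w$ across $\partial B$ produces a psh competitor which is $\leq u$ on all of $D$, and this forces $\varphi \leq P_D(u) = w$ by the very definition of the envelope. Hence $\varphi = w$ on $B$, and passing to the limit in the displayed inequality gives $(dd^c w)^n = (dd^c \varphi)^n \leq f\,dV$ on $B$, as desired.

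The main obstacle is precisely the boundary analysis concentrated in the second step: one must know that the increasing limit $\varphi$ not only lies below the envelope but actually recovers it, and this is where matching the boundary values of $\varphi_\beta$ with $w = P_D(u)$ on $\partial B$ is essential, so that the local domination principle applies with equal boundary data. Working on balls $B \Subset D$ rather than on $D$ directly is the device that makes this boundary control available, while the remaining ingredients — solvability of the auxiliary problems, viscosity comparison against continuous densities, and continuity of the Monge--Amp\`ere operator along monotone sequences — are standard and run parallel to the global argument.
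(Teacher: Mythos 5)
Your overall scheme (a one-parameter family of Dirichlet problems, viscosity comparison to get $\varphi_\beta \le u$, then identification of the increasing limit with the envelope) is the right one, but the way you set up the local problems creates two genuine gaps, both caused by using $w = P_D(u)$ as boundary data while $u$ is merely lower semicontinuous. First, the ``standard local theory'' you invoke (Bedford--Taylor \cite{BT76}) solves the Dirichlet problem for \emph{continuous} boundary data; here $w|_{\partial B}$ is only upper semicontinuous (the restriction of a bounded psh function to a sphere), and neither existence of $\varphi_\beta$ nor the boundary-value attainment that your domination and gluing arguments require (``$\varphi$ has boundary values $w$ on $\partial B$'') is available off the shelf. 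This is exactly the ``subtle analysis near the boundary'' that the paper explicitly declines to carry out (see the discussion in the proof of Theorem \ref{thm: subsolution theorem}). Second, and more seriously, your application of the viscosity comparison principle needs the boundary inequality $\varphi_{\beta,j} \le u$ \emph{at every point} of $\partial B$, which you deduce from ``$P_D(u) \le u$ on $\partial B$''. For a merely lsc bounded $u$ this inequality holds only quasi-everywhere: the set $\{P_D(u) > u\}$ is negligible, hence pluripolar, but it need not be empty (take $u \equiv 1$ off a point $x_0$ and $u(x_0)=0$; then $P_D(u) \equiv 1 > u(x_0)$), and nothing prevents it from meeting $\partial B$. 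Pluripotential arguments can discard such sets; viscosity comparison cannot, and you give no argument (nor does the paper provide one) showing that for viscosity supersolutions the inequality holds pointwise. So this step does not go through as written.

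The paper's proof circumvents both problems by first reducing to continuous data: it regularizes the lsc supersolution $u$ by inf-convolutions $u^j(z) = \inf_\zeta \{u(\zeta) + j|z-\zeta|^2\}$, which are continuous, increase to $u$, and by \cite{CIL92,CC95} are viscosity supersolutions of perturbed equations $(dd^c v)^n = f_j\, dV$ with $f_j$ continuous decreasing to $f$. For continuous data one runs your scheme on strictly pseudoconvex subdomains with boundary values $u^j$ itself (continuous, so \cite{BT76} applies and the boundary inequality for viscosity comparison is automatic), and then passes to the limit twice: Lemma \ref{lem: increasing sequence} handles the increasing sequence $u^j \nearrow u$ at the level of envelopes, and Lemma \ref{lem:exhaustion} handles the exhaustion of $D$ by relatively compact subdomains. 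This inf-convolution regularization is the missing idea in your proposal: without it you would need to build a Dirichlet theory with usc boundary data and a viscosity comparison principle tolerating pluripolar exceptional sets on the boundary, neither of which is justified.
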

 
 \begin{proof}
 	We first assume that $D$ is strictly pseudoconvex and $u$ is continuous in $\bar D$ and prove that $P_{D}(u)$ is a pluripotential super-solution of \eqref{eq: viscosity supersolution local 1}. For each $\beta>0$, let $\varphi_{\beta}$ be the unique function in $\psh(D)\cap C^0 (\bar D)$ such that
 	\begin{equation}
 		(dd^c \varphi_{\beta})^n = e^{\beta(\varphi_{\beta}-u)} f dV \ \ {\rm in }\ D
 	\end{equation}
 	with boundary values $u$ i.e. $\varphi_{\beta} = u$ in $\partial D$ \cite{BT76}. \\
 	
 	Using the local viscosity comparison principle \cite{EGZ11},  we deduce that $\varphi_{\beta} \leq u$ in $\bar D$ for any $\beta > 0$.
 	Arguing as in Step 2 of the proof of Theorem~\ref{thm: visco super vs pluri compact}, we can prove that   {\it 	 $\varphi_{\beta}$ increases to $P_D (u)$ a.e. in $D$ as $\beta$ increases to $+ \infty.$}	
 	Therefore by letting $\beta \to +\infty$ in the above equation, we conclude that $(dd^c P_D (u))^n \leq f dV$ in $D$. 
 	
 	\smallskip
 	
 	Now assume that $D \Subset \C^n$ is a pseudoconvex domain and $u$ is continuous in $D$. Then the result follows by taking an exhaustive sequence of strictly pseudoconvex domains  $D_j\Subset D$ (in view of the lemma below) and applying what have been done above.  
 	
 	For the general case when $u$ is merely lower-semi continuous, we approximate $u$ by inf-convolution i.e. we consider 
 	$$
 	u^j (z) := \inf \{ u (\zeta) + j \vert z - \zeta\vert^2 \}, \ \ z \in D^{j}, j \in \N^*,
 	$$
 	where $(D^j)$ is an exhaustive sequence of pseudoconvex domains converging to $D$.
 Then we know that $(u^j)$ is an increasing sequence of continuous functions converging to $u$ in $D$ and for each $j$, the function $u^jj$ is a supersolution of 
 $$
 (dd^c v)^n  = f_j d V, \, \, \, {\rm in }\ D^{j},
 $$
 where  $f_j$ is continuous in $D^j$ and the sequence $(f_j)$ decreases to $f$ in $D$ (see \cite{CIL92,CC95}).
 
 Fix any pseudoconvex domain $B \Subset D$. The previous result insures that for $j > 1$ large enough so that $B \subset D^j$, the  function  $P_B (u_j)$ satisfies $(dd^c P_B (u^j))^n \leq f_j$ in the pluripotentiel sense in $B$.
 Aplying Lemma \ref{lem: increasing sequence} below we obtain at the limit that  the differentiel inequality $(dd^c P_{\bar B} (u))^n \leq f$ holds in the pluripotential sense on $B$.
 
 Again taking an exhaustive  sequence $(B_j)$ of pseudoconvex domains converging to $D$ and applying  Lemma \ref{lem:exhaustion} below we obtain the required result.
 \end{proof}

\begin{lem}\label{lem: increasing sequence}
	Assume that $(u_j)$ is an increasing sequence of lower semicontinuous functions on $\bar{D}$ which converges pointwise to $u$. Then $P_{\bar{D}}(u_j)$  increases almost everywhere to $P_{\bar{D}}(u)$.  As a consequence, if $u_j$ is continuous on $\bar{D}$ for all $j$, then $P_{D}(u_j)$ increases almost everywhere to $P_{\bar{D}}(u)$.  
\end{lem}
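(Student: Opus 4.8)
The plan is to show that the monotone limit of the envelopes agrees with $P_{\bar D}(u)$ off a pluripolar set, following the scheme used for the compact case in Proposition~\ref{prop: basic property of envelope}(3), but now with the local balayage and comparison principle of Bedford--Taylor \cite{BT76,BT82}. Since $(u_j)$ increases, monotonicity of the envelope gives that $\varphi_j := P_{\bar D}(u_j)$ is an increasing sequence of uniformly bounded psh functions on $D$, bounded above by $P_{\bar D}(u)$. Setting $w := (\sup_j \varphi_j)^*$, the function $w$ is psh, satisfies $w \le P_{\bar D}(u)$, and coincides with $\sup_j \varphi_j$ outside a pluripolar (hence Lebesgue-null) set. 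This already yields the ``$\le$'' half of the claimed almost everywhere convergence, so the whole content lies in the reverse inequality $w \ge P_{\bar D}(u)$.

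First I would record the balayage/orthogonality property: because each $u_j$ is lower semicontinuous, the set $\{\varphi_j < u_j\}$ is open, and the classical Bedford--Taylor balayage argument (the local version of Lemma~\ref{lem: ort}) shows that $(dd^c \varphi_j)^n$ vanishes there, i.e. $\varphi_j$ is maximal on $\{\varphi_j < u_j\}$. Next I would pass to the limit: an increasing, uniformly bounded sequence converges in capacity, so $(dd^c \varphi_j)^n \to (dd^c w)^n$ weakly and the plurifine semicontinuity of Lemma~\ref{lem: plurifine convergence} is available. Using that $P_{\bar D}(u) \le u$ quasi-everywhere and that $u = \sup_j u_j$, one obtains the quasi-everywhere inclusion $\{w < P_{\bar D}(u)\} \subset \bigcup_j \{\varphi_j < u_j\}$, and combining this with the vanishing of $(dd^c\varphi_j)^n$ on $\{\varphi_j<u_j\}$ I would deduce that $(dd^c w)^n$ puts no mass on $\{w < P_{\bar D}(u)\}$.

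Finally I would invoke the local domination/comparison principle for bounded psh functions on $D$: since $(dd^c w)^n$ vanishes on $\{w < P_{\bar D}(u)\}$ and the boundary values of $w$ dominate those of $P_{\bar D}(u)$ (by construction $\varphi_j^* \le u_j \le u$ while $\sup_j u_j = u$), one concludes $w \ge P_{\bar D}(u)$, whence equality almost everywhere. The stated consequence is then immediate: when each $u_j$ is continuous on $\bar D$ one has $P_D(u_j) = P_{\bar D}(u_j)$ (the equality recalled just before the lemma), so $P_D(u_j) = \varphi_j$ increases almost everywhere to $P_{\bar D}(u)$. I expect the genuine obstacle to lie in this limit step together with the boundary control: pointwise increasing convergence $u_j \uparrow u$ is not uniform, so a competitor for $P_{\bar D}(u)$ need not be a competitor for any $P_{\bar D}(u_j)$. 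This is precisely why one cannot argue directly from the definition of the envelopes and must route through the Monge--Amp\`ere mass and the comparison principle, where the correct handling of $P_{\bar D}$ (as opposed to $P_D$) near $\partial D$ is the delicate point.
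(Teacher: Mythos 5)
Your reduction to the interior statement works: the sets $\{\varphi_j<u_j\}$ are indeed open (usc vs.\ lsc), the local balayage argument kills $(dd^c\varphi_j)^n$ there, and since $\varphi_k\le w<u_j\le u_k$ for $k\ge j$ on the open set $\{w<u_j\}$, weak convergence of $(dd^c\varphi_k)^n$ to $(dd^c w)^n$ along the increasing sequence gives $(dd^c w)^n\bigl(\{w<u\}\cap D\bigr)=0$, hence $(dd^c w)^n$ puts no mass on $\{w<P_{\bar D}(u)\}$ up to a pluripolar set. The genuine gap is the last step. The comparison/domination principle on $D$ requires the boundary inequality $\liminf_{z\to\partial D}\bigl(w(z)-P_{\bar D}(u)(z)\bigr)\ge 0$, and you have no argument for it: your parenthetical justification ``$\varphi_j^*\le u_j\le u$'' only bounds the boundary values of $w$ \emph{from above} by $u$, which is the wrong direction, and the only global inequality available at that stage is $w\le P_{\bar D}(u)$ --- again the wrong direction. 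Without this boundary control the domination principle is simply false ($w\equiv -1$, $v\equiv 0$ on the disc is maximal on $\{w<v\}$), and Remark \ref{rem:twoenvelopes} shows the boundary really carries the content here: for $u_j$ continuous increasing to the function $u$ that equals $0$ in $D$ and $-1$ on $\partial D$, all the envelopes $P_{\bar D}(u_j)$, $P_{\bar D}(u)$ equal $-1$ while $P_D(u)\equiv 0$, so any proof that loses track of $\partial D$ proves a false statement.

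Moreover, the natural way to close this gap is to show that for each competitor $\varphi$ of $P_{\bar D}(u)$ and each $\delta>0$ one has $\varphi-\delta\le u_j$ on $\bar D$ for $j$ large (so that $\varphi-\delta\le\varphi_j\le w$ near $\partial D$, and in fact everywhere). But that statement is exactly the paper's entire proof, and once you have it the Monge--Amp\`ere machinery becomes superfluous. The paper argues elementarily, in Dini fashion: if $u_j(x_j)<\varphi(x_j)-\varepsilon$ along a sequence $x_j\in\bar D$, pass to a limit point $x\in\bar D$ by compactness; for each fixed $k$ and $j>k$ monotonicity gives $u_k(x_j)\le u_j(x_j)\le\varphi(x_j)-\varepsilon$, and then lower semicontinuity of $u_k$ against upper semicontinuity of $\varphi$ yields $u_k(x)\le\varphi(x)-\varepsilon$ for all $k$, hence $u(x)\le\varphi(x)-\varepsilon\le u(x)-\varepsilon$, a contradiction. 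Thus $\varphi-\varepsilon$ is a competitor for $P_{\bar D}(u_j)$ for $j$ large, and the lemma follows by letting $j\to+\infty$, $\varepsilon\to 0$ and taking the supremum over $\varphi$. So your interior analysis, while correct, is aimed at the part of the lemma that is not the difficulty; the compactness-plus-semicontinuity argument up to the boundary is the missing idea, and it cannot be recovered from the Monge--Amp\`ere mass information alone.
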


\begin{proof}
	 Let $\varphi$ be an arbitrary psh function in $D$ such that $\varphi\leq u$ on $\bar{D}$.   
	
	Fix $\varepsilon>0$. We will show that $u_j\geq \varphi-\varepsilon$ on $\bar{D}$, for $j$ large enough. Assume that this was not the case. Then we can find a sequence $(x_j)\subset \bar{D}$ such that $u_j(x_j) <\varphi(x_j)-\varepsilon$. For $k\in \mathbb{N}$ fixed and $j>k$ we have 
	$$
	u_k(x_j)\leq u_j(x_j) \leq \varphi(x_j)-\vep.
	$$ 	
 We can assume that $x_j\to x\in \bar{D}$. Since $u_k$ is lsc and $\varphi$ is usc on $\bar{D}$ it follows that $u_k(x) \leq \varphi(x) -\varepsilon$. Since this is true for any $k$ we deduce that $u(x)\leq \varphi(x)-\varepsilon \leq  u(x)-\varepsilon$, a contradiction. 
	
	We thus have that $P_{\bar{D}}(u_j) \geq \varphi - \varepsilon$, for $j$ large enough. 
	By letting $j\to +\infty$ and then $\varepsilon\to 0$ we conclude that $(\lim P_{\bar{D}}(u_j))^* \geq \varphi$, ultimately giving $(\lim P_{\bar{D}}(u_j))^* \geq P_{\bar{D}}(u)$. The reverse inequality is obvious.  \end{proof}

\begin{lem}
	\label{lem:exhaustion}
	Let $(D_j)$ be an increasing sequence of relatively compact bounded domains in $D$ such that $\cup D_j=D$. Let $u$ be a lower semicontinuous function in $D$. Then $P_{\bar{D}_j}(u)$ decreases pontwise to $P_{D}(u)$. 
\end{lem}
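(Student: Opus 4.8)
The plan is to show that the sequence $P_{\bar{D}_j}(u)$ is decreasing on each $D_k$ (so that its pointwise limit $v$ is a well-defined plurisubharmonic function on $D$, each $x\in D$ lying in $D_j$ for $j$ large) and then to squeeze $v$ between $P_D(u)$ from below and from above. I may assume $u$ is bounded from below, so that every envelope dominates a constant and $v\in\psh(D)$.

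\emph{Monotonicity and lower bound.} First I would run a restriction argument. If $\varphi\in\psh(D_{j+1})$ satisfies $\varphi^*\le u$ on $\bar{D}_{j+1}$, then $\varphi\le u$ on $D_j\subset D_{j+1}$, and for $\xi\in\partial D_j\subset\bar{D}_{j+1}$ the inclusion $B(\xi,r)\cap D_j\subset B(\xi,r)\cap D_{j+1}$ together with $\varphi^*\le u$ on $\bar{D}_{j+1}$ (using upper semicontinuity of $\varphi$ at interior points) gives $(\varphi|_{D_j})^*(\xi)\le u(\xi)$. Hence $\varphi|_{D_j}$ is admissible for $P_{\bar{D}_j}(u)$, and taking suprema yields $P_{\bar{D}_{j+1}}(u)\le P_{\bar{D}_j}(u)$ on $D_j$. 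The identical argument applied to a competitor $\varphi\in\psh(D)$ with $\varphi\le u$ on $D$ (here $\partial D_j\subset D$, so the constraint on $\partial D_j$ is automatic from upper semicontinuity and $\varphi\le u$) shows $\varphi|_{D_j}$ is admissible for $P_{\bar{D}_j}(u)$; taking suprema and then usc-regularizing gives $P_D(u)\le P_{\bar{D}_j}(u)$ on $D_j$, whence $P_D(u)\le v$ after letting $j\to+\infty$.

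\emph{Upper bound.} For the reverse inequality I would exploit that the constraint is only felt quasi-everywhere. With $g_j:=\sup\{\varphi\in\psh(D_j):\varphi^*\le u\text{ on }\bar{D}_j\}$ one has $g_j\le u$ on $D_j$; since $P_{\bar{D}_j}(u)=g_j^*$ and $\{g_j<g_j^*\}$ is negligible, hence pluripolar, it follows that $P_{\bar{D}_j}(u)\le u$ quasi-everywhere on $D_j$, and therefore $v\le u$ quasi-everywhere on $D$, say off a pluripolar set $E$. Choosing $g\in\psh(D)$ with $g\le 0$ and $E\subset\{g=-\infty\}$, the function $v+\lambda g$ is plurisubharmonic and satisfies $v+\lambda g\le u$ everywhere on $D$ for each $\lambda\in(0,1)$ (it is $-\infty$ on $E$, and $\le v\le u$ off $E$), hence is a competitor for $P_D(u)$; letting $\lambda\searrow 0$ gives $v\le P_D(u)$ off $\{g=-\infty\}$, and since both functions are plurisubharmonic this propagates to all of $D$ by the sub-mean value inequality. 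Combining the two bounds gives $v=P_D(u)$.

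The main obstacle is the upper bound: because $u$ is merely lower semicontinuous, the regularized envelope $P_{\bar{D}_j}(u)$ need not lie below $u$ pointwise but only quasi-everywhere, so one cannot directly test $v$ as a competitor for $P_D(u)$. The perturbation of $v$ by a small multiple of a plurisubharmonic function that is $-\infty$ on the exceptional pluripolar set, followed by removing the perturbation via the sub-mean value property, is precisely what overcomes this difficulty; the remaining steps are routine restriction-and-supremum bookkeeping resting on $\bar{D}_j\subset\bar{D}_{j+1}$ and $\partial D_j\subset D$.
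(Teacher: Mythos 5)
Your proof is correct and follows essentially the same route as the paper's: form the decreasing limit $v$ of the envelopes $P_{\bar{D}_j}(u)$, get $P_D(u)\le v$ by restricting competitors for $P_D(u)$ to each $D_j$, and get $v\le P_D(u)$ from the quasi-everywhere inequality $v\le u$ via a perturbation by a psh function that is $-\infty$ on the exceptional pluripolar set (Josefson's theorem), removing the perturbation in the limit. The only differences are expository: you spell out the monotonicity of the envelopes and the negligible-sets-are-pluripolar step that the paper leaves implicit.
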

\begin{proof}
	Set $\varphi_j=P_{\bar{D}_j}(u)$ and note that this is a psh function in $D_j$. Clearly, $\varphi_j$ is decreasing in $j$. The decreasing limit $\lim_{j} \varphi_j$ is psh in any $D_k$. As $(D_k)$ is an exhaustive sequence of $D$, these limits define a psh function $\varphi$ in $D$. 
	
	We need to prove that $\varphi=P_{D}(u)$. Indeed, if $v$ is a psh function in $D$ such that $v\leq u$ in $D$ then $v$ is also a candidate defining $P_{\bar{D}_j}(u)$, thus $\varphi_j \geq v$. We then get $\varphi \geq P_D(u)$. On the other hand, $\varphi$ is psh in $D$ and $\varphi\leq u$ quasi everywhere in $D$, i.e. there exists $E\subset D$ a pluripolar set such that $\varphi\leq u$ in $D\setminus E$. By Josefson theorem there exists a negative psh function $\psi$ in $D$ such that $\psi\not \equiv -\infty$ and $\psi=-\infty$ on $E$. For any $\delta>0$ observe that $\varphi +\delta \psi \leq u$ in $D$. Therefore $\varphi+\delta \psi \leq P_D(u)$ for all $\delta>0$. Letting $\delta\to 0^+$ we get $\varphi\leq P_{D}(u)$ quasi everywhere hence everywhere in $D$. 
 \end{proof}

\begin{rem}\label{rem:twoenvelopes}
	We stress that the envelopes $P_D(u)$ and $P_{\bar{D}}(u)$ are in general different if $u$ is not continuous near the boundary. Indeed, take 
	the function $u$ defined on $\bar D$ by $u = 0$ in $D$ and $- 1$ in $\partial D$. Then $u$ is lower semi-continuous in $\bar D$,
	$P_D (u) = 0$ in $D$ while $P_{\bar D} (u) = - 1$ in $D$.
\end{rem}

\section{The minimum principle} \label{sec:resolutionMA}

\subsection{The minimum principle}

The following property is inspired by the fact that the minimum of two viscosity supersolutions is again a viscosity supersolution
(see Section \ref{sec:viscosity}).

\begin{lem} \label{lem: partition inequality}
Let $u,v\in \Ec(X,\theta)$ and set $\varphi:=P_{\theta}(\min(u,v))$.  Then $\varphi\in \Ec(X,\theta)$ and 
\[
\MA(\varphi) \leq {\bf 1}_{\{\varphi=u\}} \MA(u) + {\bf 1}_{\{\varphi=v\}} \MA(v). 
\]
\end{lem}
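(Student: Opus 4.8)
The plan is to localise $\MA(\varphi)$ on the contact set $\{\varphi=\min(u,v)\}$, observe that this set is contained in $\{\varphi=u\}\cup\{\varphi=v\}$, and then dominate the part of $\MA(\varphi)$ carried by each piece. Concretely, I would reduce the lemma to the two ingredients
\[
\MA(\varphi)\big(\{\varphi<\min(u,v)\}\big)=0,\qquad {\bf 1}_{\{\varphi=u\}}\MA(\varphi)\leq {\bf 1}_{\{\varphi=u\}}\MA(u),
\]
together with the symmetric statement for $v$. Granting these, the conclusion is immediate: since ${\bf 1}_{\{\varphi=\min(u,v)\}}\leq {\bf 1}_{\{\varphi=u\}}+{\bf 1}_{\{\varphi=v\}}$, the first ingredient gives $\MA(\varphi)={\bf 1}_{\{\varphi=\min(u,v)\}}\MA(\varphi)\leq {\bf 1}_{\{\varphi=u\}}\MA(\varphi)+{\bf 1}_{\{\varphi=v\}}\MA(\varphi)$, and the two contact comparisons then yield the asserted bound.

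First I would check that $\varphi\in\Ec(X,\theta)$ by truncation. Set $u_k:=\max(u,V_{\theta}-k)$, $v_k:=\max(v,V_{\theta}-k)$ and $\varphi_k:=P_{\theta}(\min(u_k,v_k))$, so all three functions have minimal singularities and, by Proposition \ref{prop: basic property of envelope}, $\varphi_k$ decreases to $\varphi$. At this truncated level $\min(u_k,v_k)$ is quasi-l.s.c., bounded from above, and admits the subextension $V_{\theta}-k\in\Ec(X,\theta)$, so the (generalised) orthogonality relation of Proposition \ref{prop: ort} applies, and combined with the contact comparison discussed below it gives $\MA(\varphi_k)\leq {\bf 1}_{\{\varphi_k=u_k\}}\MA(u_k)+{\bf 1}_{\{\varphi_k=v_k\}}\MA(v_k)$. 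Fixing a convex weight $\chi$ with $u,v\in\mathcal E_{\chi}(X,\theta)$ and integrating $|\chi(\,\cdot\,-V_{\theta})|$ against this inequality — using that $\varphi_k-V_{\theta}=u_k-V_{\theta}$ on $\{\varphi_k=u_k\}$ — bounds $\int_X|\chi(\varphi_k-V_{\theta})|\MA(\varphi_k)$ uniformly in $k$ by $\int_X|\chi(u-V_{\theta})|\MA(u)+\int_X|\chi(v-V_{\theta})|\MA(v)$. The weighted analogue of Lemma \ref{lem: BEGZ prop 2.10} then forces the decreasing limit $\varphi$ to lie in $\mathcal E_{\chi}(X,\theta)\subset\Ec(X,\theta)$, and in particular $\varphi$ itself is a subextension of $\min(u,v)$, so the first ingredient above follows from the extension of Proposition \ref{prop: ort}.

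The heart of the matter, and the step I expect to be the main obstacle, is the contact comparison ${\bf 1}_{\{\varphi=u\}}\MA(\varphi)\leq {\bf 1}_{\{\varphi=u\}}\MA(u)$, where the sets and measures involved are only well behaved in the plurifine topology. I would first treat $f\leq g$ with minimal singularities: setting $g_{\varepsilon}:=\max(f,g-\varepsilon)$, plurifine locality of the Monge-Amp\`ere operator gives ${\bf 1}_{\{f=g\}}\MA(g_{\varepsilon})={\bf 1}_{\{f=g\}}\MA(f)$ for every $\varepsilon>0$ (since $g_{\varepsilon}=f$ on the plurifine open set $\{f>g-\varepsilon\}\supset\{f=g\}$), while $g_{\varepsilon}\nearrow g$ with uniformly minimal singularities; the second inequality of Lemma \ref{lem: plurifine convergence}, applied on the plurifine closed set $\{g\leq f\}=\{f=g\}$, then yields ${\bf 1}_{\{f=g\}}\MA(f)\leq {\bf 1}_{\{f=g\}}\MA(g)$. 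For the general case $\varphi\leq u$ in $\Ec(X,\theta)$ I would run this on the truncations $\varphi^{(k)}=\max(\varphi,V_{\theta}-k)$, $u^{(k)}=\max(u,V_{\theta}-k)$ and pass to the limit via plurifine locality on $\{\varphi>V_{\theta}-k\}\cap\{u>V_{\theta}-k\}$, whose union carries the full mass of the non-pluripolar measures $\MA(\varphi)$ and $\MA(u)$. The whole delicacy lies in controlling these contact sets once minimal singularities are lost; this is why the argument is organised entirely around Lemma \ref{lem: plurifine convergence} and the non-pluripolarity of the Monge-Amp\`ere measures of finite energy functions.
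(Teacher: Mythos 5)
Your proof is correct, but it takes a genuinely different route from the paper's. The paper argues via the approximation scheme that is its leitmotiv: using Lemma \ref{lem: solve MA eq} it solves $\MA(\varphi_j)=e^{j(\varphi_j-u)}\MA(u)+e^{j(\varphi_j-v)}\MA(v)$, shows through the comparison and domination principles that $\varphi_j$ increases to $\varphi$, and gets the bound by passing to the limit in $\MA(\varphi_j)\le e^{A(\varphi_j-u)}\MA(u)+e^{A(\varphi_j-v)}\MA(v)$ (valid for $j>A$) and finally letting $A\to+\infty$. You instead combine two purely pluripotential ingredients: the orthogonality relation (the theorem following Proposition \ref{prop: ort}, applied first at the truncated level with subextension $V_{\theta}-k$, then to $\varphi$ itself once $\varphi\in\Ec(X,\theta)$ is known), and the contact-set inequality ${\bf 1}_{\{f=g\}}\MA(f)\le {\bf 1}_{\{f=g\}}\MA(g)$ for $f\le g$, which you prove correctly (the direction is the right one: on the contact set the larger function has the larger measure) via $\max(f,g-\vep)$, plurifine locality of the non-pluripolar product, and the semicontinuity statements of Lemma \ref{lem: plurifine convergence}. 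That contact inequality is precisely the tool of \cite{DDL16} --- the reference the paper itself invokes for the membership $\varphi\in\Ec(X,\theta)$ --- so your argument is in effect the DDL-style proof: it is more elementary on the equation side (no auxiliary Monge-Amp\`ere equations with exponential nonlinearity, no comparison principle for them), and the contact lemma it isolates has independent interest; the paper's route, by contrast, showcases the Berman convergence method that drives the rest of the paper. Two small points to tighten, neither of which is a genuine gap: Proposition \ref{prop: basic property of envelope}(2) is stated for bounded data, so to identify $\lim_k P_{\theta}(\min(u_k,v_k))$ with $\varphi$ you should add the easy remark that a quasi-everywhere inequality between quasi-psh functions holds everywhere (sub-mean value inequality); this same remark makes $\varphi$ an honest everywhere subextension of $\min(u,v)$, as the hypothesis of the orthogonality theorem requires. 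Also, the ``weighted analogue of Lemma \ref{lem: BEGZ prop 2.10}'' and the uniform bound $\sup_k\int_X|\chi(u_k-V_{\theta})|\MA(u_k)<+\infty$ are exactly \cite[Proposition 2.10]{BEGZ10} and the fundamental inequality for canonical truncations, so cite them explicitly rather than leaving them implicit.
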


When $\theta$ is semipositive and $u,v$ have bounded laplacian the result follows from Darvas' work \cite{Dar14,Dar15} which uses a strong regularity result on the Monge-Amp\`ere envelope. Our proof is inspired by the convergence  method of Berman \cite{Ber13}.

\begin{proof}
 The fact that $\varphi$ belongs to $\Ec(X,\theta)$ follows from \cite{DDL16}.  Without loss of generality we can assume that $u,v$ have minimal singularities. 
 
For each $j\in \N^*$ it follows from Lemma \ref{lem: solve MA eq} below
that there exists a unique $\varphi_j\in \psh(X,\theta)$ 
with minimal singularities such that  
\[
\MA(\varphi_j) = e^{j(\varphi_j-u)} \MA(u)  + e^{j(\varphi_j-v)}\MA(v). 
\]
As both $u$ and $v$ are pluripotential supersolutions of the above equation it follows that $\varphi_j\leq \varphi$. 
By the pluripotential comparison principle 
we also have that $\varphi_j$ increases almost everywhere to $\varphi_{\infty}\in \Ec(X,\theta)$.

We first prove that $\varphi_{\infty}=\varphi$. For each fixed $\vep>0$ one has 
\[
\int_{\{\varphi_{\infty}<\varphi-\vep\}} \MA(\varphi_j)\leq \int_{\{\varphi_j<\varphi-\vep\}} \MA(\varphi_j) \leq 2\vol(\theta)e^{-j\vep}\to 0. 
\]
Letting $j\to +\infty$ we obtain that $\MA(\varphi_{\infty})$ vanishes in $\{\varphi_{\infty}<\varphi\}$. 
Applying the domination principle, Proposition \ref{prop: domination principle}, 
gives $\varphi_{\infty}=\varphi$. 

Now we prove the inequality in the statement of the lemma. For each fixed $A>0$ and $j>A$, since $\varphi_j\leq \min(u,v)$ we have
\[
\MA(\varphi_j) \leq e^{A(\varphi_j-u)} \MA(u) + e^{A(\varphi_j-v)} \MA(v) 
\]
Since $\MA(\varphi_j)$ converges to $\MA(\varphi)$, by dominated convergence theorem we obtain 
\[
\MA(\varphi) \leq e^{A(\varphi-u)} \MA(u) + e^{A(\varphi-v)} \MA(v).
\]
Now, letting $A\to +\infty$ we obtain the result. 

\end{proof}

\begin{lem}
\label{lem: solve MA eq}
Assume that $u,v$ are $\theta$-psh functions with minimal singularities and fix $\beta>0$. Then  there exists a unique  $\theta$-psh function $\varphi$ with minimal singularities such that 
\[
\MA(\varphi)=e^{\beta(\varphi-u)} \MA(u)+ e^{\beta(\varphi-v)}\MA(v). 
\]	
\end{lem}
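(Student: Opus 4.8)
The plan is to rewrite the equation as $\MA(\varphi)=e^{\beta\varphi}\mu$ for the fixed non-pluripolar measure $\mu:=e^{-\beta u}\MA(u)+e^{-\beta v}\MA(v)$, to produce a subsolution and two supersolutions that sandwich the solution, and to solve by approximating $\mu$ from below and invoking Theorem \ref{thm: BBGZ13}, reserving the comparison and domination principles (Propositions \ref{prop: comparison principle exponential} and \ref{prop: domination principle}) for regularity and uniqueness. Uniqueness is immediate: if $\varphi_1,\varphi_2\in\Ec(X,\theta)$ both solve the equation, each is at once a sub- and a super-solution of $\MA(\cdot)=e^{\beta\cdot}\mu$, so Proposition \ref{prop: comparison principle exponential} (with $f=0$ and measure $\mu$) gives $\varphi_1\le\varphi_2$ and, by symmetry, $\varphi_2\le\varphi_1$.

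For existence I would first set up barriers. A direct computation shows that $u$ is a supersolution, since $e^{\beta u}\mu=\MA(u)+e^{\beta(u-v)}\MA(v)\ge\MA(u)$, and likewise for $v$. To build a subsolution with minimal singularities I would use that $u$ and $v$ have the same singularity type, so $|u-v|\le C$ for some constant $C$. Put $w:=\frac{u+v}{2}-M$. Super-additivity of the non-pluripolar Monge-Amp\`ere operator gives $\MA(w)\ge 2^{-n}(\MA(u)+\MA(v))$, whereas $e^{\beta(w-u)},\,e^{\beta(w-v)}\le e^{\beta(C/2-M)}$; choosing $M\ge \tfrac{C}{2}+\tfrac{n\log 2}{\beta}$ so that $e^{\beta(C/2-M)}\le 2^{-n}$ makes $w$ a subsolution with minimal singularities. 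By Proposition \ref{prop: comparison principle exponential} every subsolution lies below both $u$ and $v$; hence any solution $\varphi$ will satisfy $w\le\varphi\le\min(u,v)$, which forces $\varphi$ to have minimal singularities (a lower bound by a minimal-singularity function, the upper bound being automatic).

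To construct $\varphi$ I would approximate $\mu$ from below by the finite non-pluripolar measures $\mu_k:=\mathbf 1_{\{u\ge -k\}}e^{-\beta u}\MA(u)+\mathbf 1_{\{v\ge -k\}}e^{-\beta v}\MA(v)$, solve $\MA(\varphi_k)=e^{\beta\varphi_k}\mu_k$ by Theorem \ref{thm: BBGZ13}, and note that $(\varphi_k)$ is decreasing in $k$ (a larger right-hand side measure yields a smaller solution, by the comparison principle) and bounded below by $w$. The limit $\varphi:=\lim_k\varphi_k$ then has minimal singularities, and Lemma \ref{lem: plurifine convergence} gives $\MA(\varphi_k)\to\MA(\varphi)$. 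Testing against $g\ge 0$, for $k\ge j$ one has $\int_X g\,\MA(\varphi_k)\ge\int_X g\,e^{\beta\varphi_k}d\mu_j$; letting $k\to\infty$ (dominated convergence, $\mu_j$ finite and $e^{\beta\varphi_k}\le e^{\beta\sup_X\varphi_1}$) and then $j\to\infty$ ($\mu_j\nearrow\mu$) yields $\MA(\varphi)\ge e^{\beta\varphi}\mu$. Thus $\varphi$ is a subsolution, and combined with the barrier it satisfies $\varphi\le\min(u,v)$, so that $\int_X e^{\beta\varphi}d\mu\le 2\,\vol(\theta)<+\infty$.

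The main obstacle is the reverse inequality, i.e. promoting the subsolution $\varphi$ to an exact solution; the difficulty is real because $\mu$ may have infinite total mass, so one cannot naively pass to the limit on the right-hand side and the truncated $\varphi_k$ need not lie below $u$. I would close the gap by a mass comparison: integrating the truncated identities gives $\int_X e^{\beta\varphi_k}d\mu_k=\vol(\theta)$, and a careful analysis on the sublevel sets, using the integrability $\int_X e^{\beta\varphi}d\mu<+\infty$ established above, shows $\int_X e^{\beta\varphi}d\mu\ge\vol(\theta)=\int_X\MA(\varphi)$; since $\MA(\varphi)\ge e^{\beta\varphi}\mu$ as measures, equality of total masses forces equality of measures. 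A more robust alternative is to define $\varphi$ as the upper envelope of all subsolutions dominated by $\min(u,v)$ and to run a balayage argument, invoking the domination principle (Proposition \ref{prop: domination principle}) to see that this envelope solves the equation. This passage from subsolution to solution is the step I expect to require the most care.
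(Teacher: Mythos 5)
Your scaffolding is sound and is essentially the paper's argument: uniqueness via Proposition \ref{prop: comparison principle exponential}, the barrier $w=\frac{u+v}{2}-M$ (exactly the paper's lower bound $\frac{u+v}{2}-C$), truncation to finite non-pluripolar measures solvable by Theorem \ref{thm: BBGZ13}, and monotonicity of the approximating solutions. (The paper truncates the exponent rather than the measure, setting $u_j=\max(u,-j)$, $v_j=\max(v,-j)$ and solving $\MA(\varphi_j)=e^{\varphi_j-u_j}\MA(u)+e^{\varphi_j-v_j}\MA(v)$, but the two schemes are interchangeable.) The genuine gap is your final step. You reduce everything to the claim $\int_X e^{\beta\varphi}d\mu\geq \vol(\theta)$ and assert that it follows from ``a careful analysis on the sublevel sets, using the integrability $\int_X e^{\beta\varphi}d\mu<+\infty$.'' Integrability alone cannot yield this: the whole issue is whether part of the mass $\int_X e^{\beta\varphi_k}d\mu_k=\vol(\theta)$ escapes to the sets $\{u<-j\}\cup\{v<-j\}$ as $k\to\infty$, and nothing you have written excludes that.

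What excludes it --- and what you overlooked after correctly observing that $\varphi_k$ need not lie below $u$ --- is that $\varphi_k\leq \varphi_1$ and that $\varphi_1-u$ is bounded above, since $\varphi_1\geq w$ forces $\varphi_1$ to have minimal singularities and $u$ has minimal singularities too. Hence $e^{\beta(\varphi_k-u)}\leq e^{\beta C}$ uniformly in $k$, so the escaping mass is at most $e^{\beta C}\left[\MA(u)(\{u<-j\})+\MA(v)(\{v<-j\})\right]$, which tends to $0$ as $j\to\infty$ uniformly in $k$ because $\MA(u)$ and $\MA(v)$ put no mass on the pluripolar sets $\{u=-\infty\}$, $\{v=-\infty\}$; this closes your mass comparison. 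Better still, with this uniform bound the detour through total masses is unnecessary: the densities of $e^{\beta\varphi_k}\mu_k$ with respect to $\MA(u)+\MA(v)$ are ${\bf 1}_{\{u\geq -k\}}e^{\beta(\varphi_k-u)}$ and ${\bf 1}_{\{v\geq -k\}}e^{\beta(\varphi_k-v)}$, uniformly bounded by $e^{\beta C}$ and converging pointwise almost everywhere to those of $e^{\beta\varphi}\mu$, so dominated convergence gives $e^{\beta\varphi_k}\mu_k\to e^{\beta\varphi}\mu$, while $\MA(\varphi_k)\to\MA(\varphi)$ by Lemma \ref{lem: plurifine convergence}; equating the limits proves the lemma directly. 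This uniform density bound is precisely what makes the paper's one-line ``pass to the limit'' legitimate, and the same remark shows your indicator-function truncation works just as well once the bound is in hand.
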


\begin{proof}
The uniqueness follows from the comparison principle. 

	To prove existence, without loss of generality we can assume that $\beta=1$. For each $j\in \N$ set $u_j=\max(u,-j), v_j=\max(v,-j)$.  As $e^{-u_j}+ e^{-v_j}$ is bounded on $X$ it follows from \cite{BBGZ13} that there exists a unique  $\varphi_j\in \Ec^1(X,\theta)$ such that 
	\[
	\MA(\varphi_j) =e^{\varphi_j-u_j} \MA(u) +e^{\varphi_j-v_j}\MA(v). 
	\]
	By the comparison principle we know that $\varphi_j$ is decreasing in $j$ and $\varphi_j\geq \frac{u+v}{2} -C$, for some constant $C>0$ independent of $j$. We can thus pass to the limit $j\to +\infty$ and obtain the result. 
\end{proof}

\begin{lem}\label{lem: Darvas 1}
Let $u_j,u\in \Ec^p(X,\theta)$, $p>0$, be such that $(u_j)$ converges to $u$ in energy $I_p$. Then there exists a subsequence still denoted by $(u_j)$ such that 
\[
\varphi_j:= P_{\theta}\left(\inf_{k\geq j} u_k\right) \nearrow u. 
\]
\end{lem}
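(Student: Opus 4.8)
The plan is to identify the increasing limit of the $\varphi_j$ and to show, by a uniform finite-energy estimate built on the minimum principle, that this limit is $u$. First I would fix a good subsequence: since convergence in the quasi-distance $I_p$ implies convergence in capacity, I may pass to a subsequence (still written $(u_j)$) for which \emph{simultaneously} $\sum_j I_p(u_j,u)<+\infty$ and $u_j\to u$ quasi-everywhere. Because $\inf_{k\ge j}u_k$ increases with $j$, the envelopes $\varphi_j=P_\theta(\inf_{k\ge j}u_k)$ form an increasing sequence of $\theta$-psh functions; set $\varphi_\infty:=(\lim_j\varphi_j)^*$. The quasi-everywhere convergence gives $\inf_{k\ge j}u_k\nearrow\liminf_k u_k=u$ quasi-everywhere, whence $\varphi_j\le\inf_{k\ge j}u_k\le u$ quasi-everywhere, so $\varphi_j\le u$ and $\varphi_\infty\le u$ on $X$. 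It then remains to prove the reverse inequality $\varphi_\infty\ge u$, equivalently $\varphi_\infty=u$.

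The core of the argument is a uniform energy estimate for the envelopes of the \emph{finite} minima. For $N\ge j$ put $\Phi_{j,N}:=P_\theta(\min(u_j,\dots,u_N))$. Since the inner envelope does not change the outer one, $\Phi_{j,N}=P_\theta(\min(\Phi_{j,N-1},u_N))$ with $\Phi_{j,j}=u_j$, and an induction on $N$ applying Lemma~\ref{lem: partition inequality} to the pair $(\Phi_{j,N-1},u_N)\in\Ec(X,\theta)$ yields $\Phi_{j,N}\in\Ec(X,\theta)$ together with the mass bound $\MA(\Phi_{j,N})\le\sum_{k=j}^N\mathbf 1_{\{\Phi_{j,N}=u_k\}}\MA(u_k)$. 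On the contact set $\{\Phi_{j,N}=u_k\}$ one has $u-\Phi_{j,N}=u-u_k$, so this bound immediately controls $\int_X(u-\Phi_{j,N})_+^p\,\MA(\Phi_{j,N})$ by $\sum_{k=j}^N I_p(u_k,u)$.

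To control the $\MA(u)$-part I would use the comparison principle in $\Ec(X,\theta)$: for $t>0$, $\int_{\{\Phi_{j,N}<u-t\}}\MA(u)\le\int_{\{\Phi_{j,N}<u-t\}}\MA(\Phi_{j,N})\le\sum_{k=j}^N\MA(u_k)(u_k<u-t)$, and a layer-cake integration in $t$ gives $\int_X(u-\Phi_{j,N})_+^p\,\MA(u)\le\sum_{k=j}^N I_p(u_k,u)$. The region $\{\Phi_{j,N}>u\}$ where the envelope overshoots is harmless, since $\Phi_{j,N}\le u_j$ forces $0\le\Phi_{j,N}-u\le|u_j-u|$ there. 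Combining these with the quasi-triangle inequality (Theorem~\ref{thm: quasi triangle inequality}) gives $I_p(\Phi_{j,N},u)\le C\sum_{k\ge j}I_p(u_k,u)$ uniformly in $N\ge j$.

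Finally I would let $N\to+\infty$: the $\Phi_{j,N}$ decrease to $\varphi_j$, so Fatou/lower semicontinuity of $I_p$ along this decreasing sequence (or Proposition~\ref{lem: capacity implies energy}) transfers the bound, $I_p(\varphi_j,u)\le C\sum_{k\ge j}I_p(u_k,u)$. The right-hand side is the tail of a convergent series, hence tends to $0$, so $\varphi_j\to u$ in $I_p$; as $\varphi_j$ increases to $\varphi_\infty$ this forces $\varphi_\infty=u$ (and in particular $\varphi_\infty\in\Ec^p(X,\theta)$). Alternatively, one may finish from $\varphi_\infty\le u$ by checking that $\MA(\varphi_\infty)$ puts no mass on $\{\varphi_\infty<u\}$ and invoking the domination principle (Proposition~\ref{prop: domination principle}). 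I expect the main obstacle to be the uniform estimate of the two middle paragraphs: the infimum $\inf_{k\ge j}u_k$ of the usc potentials is only quasi-upper-semicontinuous, so the orthogonality results tuned to quasi-lsc obstacles cannot locate the Monge--Amp\`ere mass of $\varphi_j$ directly; the detour through the finite minima (which are quasi-continuous), the iterated minimum principle, and the comparison principle is precisely what keeps the energy bounds uniform in $N$ and summable in $j$.
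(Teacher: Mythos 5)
Your proposal is correct and follows essentially the same route as the paper: extract a subsequence with summable errors, iterate the minimum principle (Lemma \ref{lem: partition inequality}) on envelopes of finite minima to get a uniform, summable energy bound, pass to the decreasing limit in $k$ (Lemma \ref{lem: BEGZ prop 2.10} together with continuity of the Monge-Amp\`ere operator), and conclude that the increasing limit equals $u$ via the domination principle (Proposition \ref{prop: domination principle}). The only deviation is that you additionally control the $\MA(u)$-part of $I_p$ (via the comparison principle and a layer-cake integration) so as to obtain full $I_p$-convergence of $\varphi_j$ to $u$; the paper's argument is leaner, using only the bound $\int_X |u-\varphi_j|^p \,\MA(\varphi_j)\leq 2^{-j+1}$ and finishing directly with the domination principle, exactly as in the alternative ending you describe.
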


The proof is an adapatation of an argument due to Darvas \cite{Dar14,Dar15}.

\begin{proof}
After extracting a subsequence we can assume that 
\[
\int_X |u_j-u|^p  \MA_{\theta}(u_j) \leq 2^{-j}, \ \forall j. 
\]
For each $j<k$ set $\varphi_{j}^k :=P_{\theta}\left(\inf_{j\leq \ell\leq k} u_{\ell}\right)$. It follows from \cite{DDL16} that $\varphi_{j}^k$ belongs to $\Ec^p(X,\theta)$. It follows from Lemma \ref{lem: partition inequality} that 
\[
\int_X |u-\varphi_{j}^k|^p \MA(\varphi_{j}^k) \leq \sum_{\ell =j}^k \int_X |u-u_{\ell}|^p \MA(u_{\ell}) \leq 2^{-j+1}. 
\]
It then follows from Lemma \ref{lem: BEGZ prop 2.10}  that the decreasing limit $\varphi_j:=\lim_{k\to +\infty}(\varphi_{j}^k)$ belongs to $\Ec^p(X,\theta)$. Moreover the continuity of the Monge-Amp\`ere operator (see \cite{BEGZ10}) gives 
\[
\int_X |u-\varphi_{j}|^p \MA(\varphi_{j}) \leq 2^{-j+1}. 
\]
Hence the increasing limit $\varphi:=\lim_{j\to +\infty}\varphi_j\leq u$ satisfies $\int_{\{\varphi<u\}}\MA(\varphi)=0$. The domination principle then reveals that $u=\varphi$. 
\end{proof}

\subsection{Solving complex Monge-Amp\`ere equations}

Let $(X,\omega)$ be a compact K\"ahler manifold and $\mu$ a positive non-pluripolar Radon measure in some open subset
$\Omega$  of $X$. Here we allow $\mu$ to have infinite total mass (i.e. $\mu(\Omega)$ may be $+\infty$). 

 Let $\theta$ be a smooth closed $(1,1)$-form on $X$ which represents a big class. We assume that there exists  
  $u_0 \in {\mathcal E}(X,\theta)$,  such that 
 $$
 (\theta+dd^c u_0)^n \geq e^{u_0} \mu.
 $$
 In particular $e^{u_0} \mu$ extends as a Radon measure in all of $X$.
 
 We would like to solve the complex Monge-Amp\`ere equation 
 \begin{equation} \label{eq: min of supersolution}
 (\theta+dd^c \f)^n =e^{\f} \mu,
 \end{equation}
  by considering the lower envelope of supersolutions.
 We first note the following simple consequence of Lemma \ref{lem: partition inequality} which is basic to what follows:
 
 \begin{pro} \label{pro:min}
 Assume $u,v \in \Ec(X,\theta)$ are both supersolutions of \eqref{eq: min of supersolution}, i.e. $\MA(u) \leq e^u \mu$, $\MA(v) \leq e^{v} \mu$. Then
 $P(\min(u,v))$ is also a supersolution,  
 $$
 \MA(P(\min(u,v)) \leq e^{P(\min(u,v))} \mu.
 $$
 \end{pro}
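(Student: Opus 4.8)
The plan is to read the statement off the partition inequality of Lemma~\ref{lem: partition inequality}. Write $\varphi:=P(\min(u,v))$. That lemma already supplies $\varphi\in\Ec(X,\theta)$ together with
\[
\MA(\varphi)\le {\bf 1}_{\{\varphi=u\}}\MA(u)+{\bf 1}_{\{\varphi=v\}}\MA(v),
\]
and, since $\varphi\le\min(u,v)$ quasi-everywhere by definition of the envelope, the measure $\MA(\varphi)$ is carried by the contact set $\{\varphi=u\}\cup\{\varphi=v\}$.

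Into this I feed the two supersolution hypotheses, read pointwise on the contact sets. On $\{\varphi=u\}$ one has $u=\varphi$, whence $\MA(u)\le e^{u}\mu=e^{\varphi}\mu$ there, and symmetrically $\MA(v)\le e^{\varphi}\mu$ on $\{\varphi=v\}$. Substituting into the displayed inequality gives
\[
\MA(\varphi)\le\big({\bf 1}_{\{\varphi=u\}}+{\bf 1}_{\{\varphi=v\}}\big)e^{\varphi}\mu .
\]
Away from the overlap $N:=\{\varphi=u=v\}$ the indicator sum equals $1$, so the desired inequality $\MA(\varphi)\le e^{\varphi}\mu$ holds there outright.

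The sole genuine obstacle is the overlap $N$, where the crude sum double-counts and only yields $2e^{\varphi}\mu$; this factor must be removed, since \ref{thmC} needs the sharp bound and it really fails to be automatic: on $N$ the two Monge--Amp\`ere measures $\MA(u)$ and $\MA(v)$ need not agree, so one cannot simply merge the two terms. I would instead upgrade the argument to the \emph{disjoint} form of the partition inequality, i.e. establish the contact (barrier) inequality ${\bf 1}_{\{\varphi=u\}}\MA(\varphi)\le{\bf 1}_{\{\varphi=u\}}\MA(u)$, expressing that where the barrier $u$ touches $\varphi$ from above its Monge--Amp\`ere mass dominates that of $\varphi$. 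Granting this, and its analogue for $v$, I would decompose $X$ into the disjoint Borel sets $\{\varphi=u\}$, $\{\varphi=v\}\setminus\{\varphi=u\}$ and a remainder carrying no $\MA(\varphi)$-mass, bound $\MA(\varphi)$ by $\MA(u)\le e^{\varphi}\mu$ on the first and by $\MA(v)\le e^{\varphi}\mu$ on the second, and so conclude $\MA(\varphi)\le e^{\varphi}\mu$ with the overlap counted only once. I expect this contact inequality to be the crux: on $N$ it cannot be extracted from the approximation of Lemma~\ref{lem: partition inequality} (which still double-counts there) and must be proved by a barrier/comparison argument, using Bedford--Taylor plurifine locality \cite{BT87} and the domination principle of Proposition~\ref{prop: domination principle}.
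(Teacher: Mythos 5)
Your proof takes a genuinely different route from the paper's, and it works modulo one fact you flag but do not prove. The paper also starts from Lemma~\ref{lem: partition inequality} and faces exactly the double-counting you identify on $N=\{\varphi=u=v\}$, but it disposes of it by perturbation rather than by sharpening the partition inequality: since $\mu$ is a Radon measure on $\Omega$, one has $\mu(u=v+r)=0$ for all but countably many $r$; for such $r$ the two contact sets of $\varphi_r:=P_{\theta}(\min(u,v+r))$ meet only inside $\{u=v+r\}$, a $\mu$-negligible set, so the crude (double-counting) sum already gives $\MA(\varphi_r)\leq e^{\varphi_r+|r|}\mu$, and one concludes by letting $r\to 0$, using Proposition~\ref{prop: basic property of envelope} (the envelopes $\varphi_r$ decrease to $P_{\theta}(\min(u,v))$) and continuity of $\MA$ along decreasing sequences in $\Ec(X,\theta)$. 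Your alternative, the disjoint decomposition through the contact inequality ${\bf 1}_{\{\varphi=u\}}\MA(\varphi)\leq {\bf 1}_{\{\varphi=u\}}\MA(u)$ for $\varphi\leq u$, is viable and your reduction to it is correct; but note that this inequality is not something to be rebuilt from scratch with the domination principle (which plays no role here). It is precisely the non-pluripolar form of Demailly's maximum principle, $\MA(\max(\varphi,u))\geq {\bf 1}_{\{\varphi\geq u\}}\MA(\varphi)+{\bf 1}_{\{\varphi<u\}}\MA(u)$, read with $\max(\varphi,u)=u$; it follows from plurifine locality \cite{BT87} and is available for non-pluripolar Monge-Amp\`ere measures of functions in $\Ec(X,\theta)$ (see \cite{BEGZ10,DDL16}). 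Comparing the two: the paper's argument uses only tools already established in the paper, but it leans on the Radon hypothesis on $\mu$ (this is exactly where that hypothesis enters the proof); your argument requires importing the contact inequality, but then avoids the limiting argument in $r$ altogether and works verbatim for an arbitrary non-pluripolar positive measure.
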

 
 \begin{proof}
 Observe first that since $\mu$ is a Radon measure in
 $\Omega$, $\mu(u=v+r)=0$ for almost every $r\in \R$. Fix such an $r$ and set $\varphi_r=P_{\theta}(\min(u, v+r))$. It follows from Lemma \ref{lem: partition inequality} that  
 \begin{eqnarray*}
 	 \MA(\varphi_r) &\leq & {\rm 1}_{\{\varphi_r=u\}} \MA(u) + {\rm 1}_{\{\varphi_r=v+r\}}\MA(v)\\
&\leq & {\rm 1}_{\{\varphi_r=u\}} e^u \mu  + {\rm 1}_{\{\varphi_r=v+r\}}e^v \mu \\
& \leq & e^{\varphi_r+|r|}\mu.
 \end{eqnarray*}
 Proposition \ref{prop: basic property of envelope} insures that $\varphi_r$ decreases pointwise to $P_{\theta}(\min(u,v))$. The latter belongs to $\Ec(X,\theta)$ as follows from \cite{DDL16}. We conclude by letting $r \to 0$. 
 \end{proof}

 This result guarantees that the families of pluripotential super-solutions is stable under the 
 operation $P(\min(\cdot,\cdot))$. One can thus hope and solve the equation by taking
 the infimum of supersolutions; this is the contents of  \ref{thmC} from the introduction
 which we now establish :

\begin{thm} \label{thm: subsolution theorem}
   Assume there exists a 
 subsolution $u_0 \in {\mathcal E}(X,\theta)$, i.e. 
 $$
 (\theta+dd^c u_0)^n \geq e^{u_0} \mu.
 $$
 
 Then the envelope of supersolutions
 $$
 \f:=P\left( \inf \{ \p, \p \in {\mathcal E}(X,\theta) \text{ and } (\theta+dd^c \p)^n \leq e^{\p} \mu\} \right)
 $$
 is the unique pluripotential solution of $ (\theta+dd^c \f)^n = e^{\f} \mu$.
\end{thm}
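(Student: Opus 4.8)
The plan is to construct a solution by exhausting $\mu$ by finite non-pluripolar measures, solving the resulting Monge-Amp\`ere equations with Theorem~\ref{thm: BBGZ13}, and then identifying the monotone limit of these solutions with $\varphi$ by means of the comparison principle. Throughout, the subsolution $u_0$ provides the lower bounds that keep everything inside $\mathcal{E}(X,\theta)$, while Proposition~\ref{pro:min} (the minimum principle) is the conceptual reason why the lower envelope of supersolutions is the natural candidate: the supersolution family is stable under $P_\theta(\min(\cdot,\cdot))$.

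First I would produce the solution. Fix an exhaustion $K_1 \Subset K_2 \Subset \cdots$ of $\Omega$ by compact sets and set $\mu_k := \mathbf{1}_{K_k}\mu$; each $\mu_k$ is a finite positive measure on $X$ charging no pluripolar set, and $\mu_k \nearrow \mu$. By Theorem~\ref{thm: BBGZ13} there is a unique $w_k \in \mathcal{E}^1(X,\theta)$ with $\MA(w_k) = e^{w_k}\mu_k$. Since $\mu_k \leq \mu_{k+1}$, each $w_k$ is a subsolution of the $\mu_{k+1}$-equation, so the comparison principle (Proposition~\ref{prop: comparison principle exponential}) gives $w_k \leq w_{k+1}$; likewise $\MA(u_0) \geq e^{u_0}\mu \geq e^{u_0}\mu_k$ makes $u_0$ a subsolution of the $\mu_k$-equation, whence $u_0 \leq w_k$ for every $k$.

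Next I would secure a uniform upper bound. As $w_k \in \mathcal{E}^1$, the total mass identity $\int_X e^{w_k}\,d\mu_k = \vol(\theta)$ holds; fixing $k_0$ with $\mu_{k_0}(X)>0$ and applying Jensen's inequality to the probability measure $\mu_{k_0}/\mu_{k_0}(X)$ bounds $\int_X w_k\,d\mu_{k_0}$ from above uniformly in $k\geq k_0$. Because $\mu_{k_0}$ is non-pluripolar, this forces $\sup_X w_k \leq C$ with $C$ independent of $k$. Hence $w_k$ increases to some $w_\infty \in \psh(X,\theta)$ with $u_0 \leq w_\infty$ and $\sup_X w_\infty \leq C$, so $w_\infty \in \mathcal{E}(X,\theta)$. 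Passing to the limit, using continuity of $\MA$ along the increasing sequence (the monotone convergence theorem for the complex Monge-Amp\`ere operator in $\mathcal{E}(X,\theta)$) on the left and monotone convergence $e^{w_k}\mu_k \nearrow e^{w_\infty}\mu$ on the right, yields $\MA(w_\infty) = e^{w_\infty}\mu$ with total mass $\vol(\theta)$; in particular $e^{w_\infty}\mu$ is a genuine finite measure.

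Finally I would identify $w_\infty$ with $\varphi$. The function $w_\infty$ is a solution, hence simultaneously a supersolution and a subsolution. Being a supersolution, it shows the defining family is nonempty and, directly from the definition of the envelope, $\varphi = P_\theta(\inf\{\psi\}) \leq P_\theta(w_\infty) = w_\infty$. Being a subsolution, Proposition~\ref{prop: comparison principle exponential} gives $w_\infty \leq \psi$ for every supersolution $\psi$, so $w_\infty \leq \inf\{\psi\}$ and therefore $w_\infty \leq P_\theta(\inf\{\psi\}) = \varphi$. Thus $w_\infty = \varphi$ solves $\MA(\varphi) = e^{\varphi}\mu$, and uniqueness is immediate from the comparison principle, any two solutions being both sub- and supersolutions. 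The main obstacle is the existence step: securing the uniform upper bound and, above all, justifying the convergence $\MA(w_k) \to \MA(w_\infty)$ when the uniform lower bound $u_0$ has merely finite energy rather than minimal singularities, together with the monotone passage to the limit on the right-hand side for a measure $\mu$ that may be singular and of infinite total mass on $\Omega$.
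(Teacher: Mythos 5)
Your overall plan (exhaust $\mu$ by finite non-pluripolar measures, solve by Theorem~\ref{thm: BBGZ13}, pass to the limit, identify the limit with $\varphi$ by comparison) is a genuinely different route from the paper's, but it has two gaps, and the second one is precisely the hard point of the theorem. First, the monotonicity is backwards. Since $\mu_k\leq\mu_{k+1}$, the solution $w_k$ of $\MA(w_k)=e^{w_k}\mu_k$ satisfies $\MA(w_k)\leq e^{w_k}\mu_{k+1}$, i.e.\ $w_k$ is a \emph{supersolution} of the $(k+1)$-st equation, not a subsolution; Proposition~\ref{prop: comparison principle exponential} then gives $w_{k+1}\leq w_k$, so the sequence \emph{decreases} (heuristically: enlarging the right-hand measure while keeping $\int e^{w_k}d\mu_k=\vol(\theta)$ fixed pushes the solution down). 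This makes your Jensen step unnecessary (the upper bound is simply $w_k\leq w_1$); it was also shaky as stated, since a bound on $\int w_k\,d\mu_{k_0}$ controls $\sup_X w_k$ only if $\theta$-psh functions are uniformly $\mu_{k_0}$-integrable, which a general non-pluripolar measure does not guarantee. What you genuinely need is the lower bound $w_k\geq u_0$, which you do have.

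Second, and decisively, the limit passage on the right-hand side is not a monotone convergence and you do not prove it: with the corrected monotonicity, $e^{w_k}{\bf 1}_{K_k}$ is the product of a decreasing factor and an increasing one. What the weak convergence $\MA(w_k)\to\MA(w_\infty)$ together with the one-sided comparisons actually yields is $\MA(w_\infty)\geq e^{w_\infty}\mu$ on $X$ (so $w_\infty$ is a subsolution) and equality only on Borel subsets of $\Omega$; since $\MA(w_\infty)(X)=\vol(\theta)$, the defect $\vol(\theta)-\int_\Omega e^{w_\infty}d\mu\geq 0$ may be carried by $\MA(w_\infty)$ on $X\setminus\Omega$, i.e.\ mass can escape to $\partial\Omega$, exactly where $\mu$ may have infinite mass (note that $e^{w_j}\mu$ fails to be outer regular at $\partial\Omega$, which is why ``$\nu_k\leq\sigma$ passes to weak limits'' breaks down there). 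Ruling this out is the crux; you flag it as ``the main obstacle'' but give no argument, and the lower bound $w_k\geq u_0$ cannot do it: controlling the tails $\int_{K_k\setminus K_j}e^{w_k}d\mu$ uniformly would require an \emph{upper} bound of the type $w_k\leq u_0+C$, which is unavailable. Nor can you compare $w_\infty$ (a subsolution) with ``the'' solution, since its existence is what is being proved. The paper circumvents this entirely: it shows the envelope $\varphi=P(\inf\{\text{supersolutions}\})$ is itself a supersolution (the minimal one) via Lemma~\ref{lem: partition inequality}, Proposition~\ref{pro:min} and the Choquet-type Lemma~\ref{lem: analogue of Choquet}, and then upgrades it to a solution by a global balayage in which the reference measure outside a small ball $B$ is $\MA(\varphi)$ itself,
\[
\MA(\psi_j)={\bf 1}_{X\setminus B}\,e^{\psi_j-\max(\varphi,-j)}\MA(\varphi)+{\bf 1}_B\,e^{\psi_j}\mu,
\]
so no mass can be lost, and minimality of $\varphi$ forces $\varphi=\psi$. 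To repair your scheme you would need an analogous cushion in the approximating right-hand sides (e.g.\ keeping, outside $K_k$, a measure dominated by the Monge--Amp\`ere measure of a supersolution) rather than the bare truncation ${\bf 1}_{K_k}\mu$.
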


This result largely generalizes the main result of Berman-Guenancia \cite[Theorem A]{BG14} : 
a projective complex algebraic variety $V$ with semi-log canonical
singularities and ample canonical bundle admits a unique K\"ahler-Einstein metric.
Constructing the latter indeed boils down to solving a complex Monge-Amp\`ere equation as above,
where 
\begin{itemize}
\item $\pi: X \rightarrow V$ is a resolution of singularities,
\item  $\theta=\pi^* \omega_V$ is the pull-back of a K\"ahler form representing $c_1(V)$,
\item $\mu=fdV$ is absolutely continuous with respect to Lebesgue
measure with a density $0 \leq f$ which is smooth in $X \setminus D=\pi^{-1}V^{reg}$,
and blows up near $D=(s_D=0)$ like $|s_D|^{-2}$.
\end{itemize}   
One easily constructs a subsolution in this case (take e.g. $-(-\log |s_D|^{-2})^{a}$ with
$0<a<1$ and $s_D$ appropriately normalized).

\begin{proof}
Let $K$ be a compact subset of $\Omega$ such that $0<\mu(K)<+\infty$ and denote by $\mu_K$ the restriction of $\mu$ on $K$, which is a positive non-pluripolar measure on $X$. It follows from \cite{BBGZ13} that there exists $\varphi_K\in \mathcal{E}(X,\theta)$ such that $\MA(\varphi_K)=e^{\varphi_K} \mu_K$. Hence $\varphi_K$ is a supersolution of \eqref{eq: min of supersolution}.  The family ${\mathcal F}$ of supersolutions is thus non-empty, and it is uniformly  bounded from below by the subsolution $u_0$, as follows from the comparison principle.

It follows from Proposition \ref{pro:min} that ${\mathcal F}$ is stable by $P \circ \min (\cdot, \cdot)$.
Coupled with an analogue of a classical lemma due to Choquet (see Lemma \ref{lem: analogue of Choquet}), this insures that
$$
 \f:=P\left( \inf \{ \p, \p \in {\mathcal E}(X,\theta) \text{ and } (\theta+dd^c \p)^n \leq e^{\p} \mu\} \right)
 $$
 is again a supersolution to the equation: this is the minimal supersolution. 
  
In order to prove that $\f$ is actually the solution it suffices to show that $\MA(\varphi)=e^{\varphi}\mu$ in any small ball $B\subset \Omega \cap \Amp(\theta)$. Fix such a ball $B$. We  construct a supersolution $\psi$ which is smaller than $\varphi$ and which solves the equation in $B$. The classical method to produce such a supersolution is to solve a local Dirichlet problem in $B$ and glue the local function with $\varphi$ on the boundary $\partial B$. This requires a subtle analysis near the boundary as the functions at hand are not continuous (they may be even unbounded).  We provide rather a global method which is simpler. 

For each $j\in \mathbb{N}$ we let $\psi_j\in \Ec(X,\theta)$ be the unique solution to 
\[
\MA(\psi_j) =  \mathrm{\bf 1}_{X\setminus B} e^{\psi_j-\max(\varphi,-j)} \MA(\varphi) + \mathrm{\bf 1}_B e^{\psi_j}\mu.
\]
The existence of such a solution follows from \cite{BBGZ13}, observing that  
$$
\nu_j=\mathrm{\bf 1}_{X\setminus B} e^{-\max(\varphi,-j)} \MA(\varphi) + \mathrm{\bf 1}_B \mu
$$
is a non pluripolar Radon measure on $X$.
Since $\MA(\varphi)\leq e^{\varphi}\mu$, one can check that $u_0$ is a subsolution of the above equation. It thus follows from the comparison principle that $\psi_j \geq u_0$ decreases to $\psi \in \Ec(X,\theta)$ which solves 
\[
\MA(\psi) =  \mathrm{\bf 1}_{X\setminus B} e^{\psi-\varphi} \MA(\varphi) + \mathrm{\bf 1}_B e^{\psi}\mu.
\]
Now, one can check  that $\varphi$ is a supersolution of the above equation while $\psi$ is a supersolution of the equation \eqref{eq: min of supersolution}. By the comparison principle and by  minimality of $\varphi$ we have that $\varphi = \psi$, finishing the proof.  
\end{proof}

We have used the following analogue of Choquet's lemma:

\begin{lem}
	\label{lem: analogue of Choquet}
	Let $\mathcal{U}$ be a family of upper semicontinuous functions on $X$. Then there exists a  countable subfamily $\mathcal{N}\subset \mathcal{U}$ such that $\inf_{u\in \mathcal{U}} u=\inf_{u\in \mathcal{N}} u$. 
\end{lem}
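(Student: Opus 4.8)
The plan is to adapt the classical proof of Choquet's lemma, using that $X$, being a compact K\"ahler manifold, is a second countable (indeed metrizable) topological space. Write $g:=\inf_{u\in\mathcal U}u$. The inequality $\inf_{u\in\mathcal N}u\ge g$ is automatic for any subfamily $\mathcal N\subset\mathcal U$, so the whole content is to produce a \emph{countable} $\mathcal N$ realizing the reverse inequality pointwise everywhere.

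First I would reduce to bounded functions: composing with a fixed increasing homeomorphism $\Phi:[-\infty,+\infty]\to[0,1]$ (e.g.\ a rescaled $\arctan$) replaces $\mathcal U$ by $\{\Phi\circ u\}$, preserves upper semicontinuity, and commutes with infima and with suprema over subsets, so it is harmless and guarantees that all the quantities below are finite. Fix then a countable basis $\{B_k\}_{k\in\N}$ of the topology of $X$. For each $k$ set
\[
s_k:=\inf_{u\in\mathcal U}\ \sup_{B_k}u,
\]
and choose a sequence $(u_{k,j})_{j\in\N}\subset\mathcal U$ with $\sup_{B_k}u_{k,j}\to s_k$ as $j\to+\infty$. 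The family $\mathcal N:=\{u_{k,j}:k,j\in\N\}$ is countable; I claim it works.

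To check $h:=\inf_{u\in\mathcal N}u\le g$, fix $x\in X$ and $\varepsilon>0$. Since $g(x)=\inf_{u\in\mathcal U}u(x)$ there is $u^{\ast}\in\mathcal U$ with $u^{\ast}(x)<g(x)+\varepsilon/2$, and because $u^{\ast}$ is upper semicontinuous the set $\{u^{\ast}<g(x)+\varepsilon/2\}$ is open and contains $x$; hence it contains some basic set $B_k\ni x$. On $B_k$ we have $u^{\ast}<g(x)+\varepsilon/2$, so $s_k\le\sup_{B_k}u^{\ast}\le g(x)+\varepsilon/2$, and therefore $\sup_{B_k}u_{k,j}<g(x)+\varepsilon$ for $j$ large. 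In particular $u_{k,j}(x)\le\sup_{B_k}u_{k,j}<g(x)+\varepsilon$, whence $h(x)\le u_{k,j}(x)<g(x)+\varepsilon$. Letting $\varepsilon\to0$ gives $h(x)\le g(x)$, and thus $h=g$ everywhere.

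The key point, and the place where one must be careful, is that this yields \emph{exact} pointwise equality rather than equality only up to some regularization (as in the supremum version of Choquet's lemma). This is made possible by controlling the quantity $\sup_{B_k}u$ over a whole basic neighborhood rather than the pointwise values $u(x)$: a single function $u_{k,j}$ then dominates $h$ simultaneously at every point of $B_k$, and upper semicontinuity of the individual members $u^{\ast}$ is exactly what guarantees that a near-optimal function stays below the target on an entire basic neighborhood of $x$. I do not expect any genuine obstacle beyond organizing these observations and handling the $\pm\infty$ values via the initial boundedness reduction.
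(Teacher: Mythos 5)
Your proof is correct, and it takes a genuinely different route from the paper. The paper first reduces to a uniformly bounded family (replacing $u$ by $u/(1+|u|)$), then regularizes each function by the Lipschitz sup-convolution $\Phi(v,j)(x)=\sup\{v(y)-jd(x,y)\}$ with respect to a metric $d$ on $X$, applies the classical Choquet lemma (with lower semicontinuous regularization) at each level $j$ to extract countable families $(\varphi_k^j)_k$, and finally removes the regularization by a limiting argument as $j\to+\infty$, using that sup-convolutions of upper semicontinuous functions decrease pointwise to the function itself. You instead argue directly: after your (equally valid) boundedness reduction via an order-preserving homeomorphism of $[-\infty,+\infty]$ onto $[0,1]$, you fix a countable basis $\{B_k\}$ of the topology, take minimizing sequences for $s_k=\inf_{u\in\mathcal U}\sup_{B_k}u$, and use upper semicontinuity of a near-optimal $u^{\ast}$ to trap it below $g(x)+\varepsilon/2$ on an entire basic neighborhood of $x$, which forces some member of your countable family below $g(x)+\varepsilon$ at $x$. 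Your argument is more elementary and self-contained: it never invokes the classical Choquet lemma or any regularization machinery, needs only second countability of $X$ rather than a metric, and it makes transparent exactly where upper semicontinuity is used to obtain \emph{exact} pointwise equality (the point of the lemma, as opposed to equality up to regularization). The paper's proof, by contrast, recycles the sup-convolution and Choquet tools already present in pluripotential theory, at the cost of a two-step approximation. Both proofs are complete; yours could serve as a drop-in replacement.
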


\begin{proof}
Replacing each function $u \in \mathcal{U}$ by $u/(1+|u|)$, we are reduced to the case when the family
$\mathcal{U}$ is uniformly bounded.

Fix a distance $d$ on $X$ (e.g. induced by a Riemann metric on $X$). For an upper semicontinuous function $v$ on $X$ we consider the sup-convolution $\Phi(v,j)$ defined by 
\[
\Phi(v,j)(x) := \sup\{v(y) -jd(x,y)\ ; \ y\in X\}, j\in \mathbb{N}.  
\]
Then for each $j\in \mathbb{N}$, $\Phi(v,j)$ is continuous on $X$ and as $j\to +\infty$ the sequence $\Phi(v,j)$ decreases pointwise to $v$.

Set $u:=\inf_{v\in \mathcal{U}} v$. By Choquet's lemma (see e.g. \cite[Lemma 4.31]{GZbook}), for each $j\in \mathbb{N}$ there exists a sequence $(\varphi_k^j)_{k\in \mathbb{N}}\subset \mathcal{U}$ such that 
\[
(\inf_k \Phi(\varphi_k^j,j))_{\star} = (\inf_{v\in \mathcal{U}} \Phi(v,j))_{\star}.
\] 
	
Set $\varphi:= \inf_{j,k} \varphi_{k}^j$. The lemma is now reduced to showing that  $\varphi=u$. Indeed, it is obvious that $\varphi\geq u$. For each $j\in \mathbb{N}$ and any $v\in \mathcal{U}$, we have 
\[
(\Phi(\varphi, j))_{\star} \leq  (\inf_k \Phi(\varphi_k^j,j))_{\star} \leq \Phi(v,j). 
\]
We observe also that $(\Phi(\varphi,j))_{\star}=\Phi(\varphi,j)$ for any $j$.  
The function $\Phi(\varphi,j)$ is continuous on $X$. We thus have $\Phi(\varphi,j)\leq \Phi(v,j)$ for all $v\in \mathcal{U}$. Note also that $\varphi, u$ are upper semicontinuous on $X$. Letting $j\to +\infty$ we get $\varphi\leq u$ completing the proof.  
\end{proof}

At the end of the proof of \ref{thmC} we could also have used a local Dirichlet problem and do a gluing process (balayage technique). We prove in the following that this process works well for measures with finite masses:

\begin{lem} \label{lem:gluing}
Let $u$ be a  $\theta$-psh function with minimal singularities  such that $\MA(u)\leq e^u \nu$, where $\nu$ is a non-pluripolar positive Radon measure on $X$.  
Let $B$ be a small ball in the ample locus of $\theta$. Let  $v$ be a bounded $\theta$-psh function in $B$ such that $v\leq u$ in $B$, $\lim_{z\to \partial B}(v-u)\geq 0$,  and    $\MA(v) \leq e^v \nu$ in $B$.  Set
 $$
 \p(x)=\left\{
 \begin{array}{ccl}
 u(x) & \text{ if } & x \in X \setminus B \\
 v(x) & \text{ if } & x \in   B.
 \end{array}
 \right.
 $$
Then $P_{\theta}(\p)$ is a (pluripotential) supersolution of the equation $\MA(\varphi)=e^{\varphi}\nu$ on $X$.
\end{lem}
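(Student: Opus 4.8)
The plan is to compute the non-pluripolar Monge-Amp\`ere measure of $w:=P_{\theta}(\p)$ on its contact set with $\p$, and to read off the supersolution inequality piece by piece from the two data $u$ and $v$.

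First I would record the elementary properties of $\p$. Since $v\leq u$ in $B$ and $\p=u$ on $X\setminus B$, we have $\p\leq u$ on $X$, hence $w\leq u$; and the boundary hypothesis $\lim_{z\to\partial B}(v-u)\geq 0$ together with $v\leq u$ forces $v-u\to 0$ along $\partial B$, so the two pieces of $\p$ glue \emph{continuously}. Combined with the quasi-continuity of the $\theta$-psh functions $u$ and $v$, this shows that $\p$ is quasi-continuous, in particular quasi lower semicontinuous. Moreover $\p$ is bounded from above (by $\sup_X u$) and admits a subextension in $\Ec(X,\theta)$: indeed $\p\geq\min(u,\inf_B v)\geq u-C_0$ for $C_0$ large, and $u-C_0\in\Ec(X,\theta)$. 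I can therefore invoke Proposition~\ref{prop: ort}, in the generalized form valid for quasi-lsc functions admitting a subextension in $\Ec(X,\theta)$, to conclude that $w\in\Ec(X,\theta)$ and that $\MA(w)$ puts no mass on $\{w<\p\}$, i.e. $\MA(w)$ is carried by the contact set $\{w=\p\}$. Since $u-C_0\leq w\leq u$, the function $w$ in fact has minimal singularities, so all the Monge-Amp\`ere measures below are the classical Bedford--Taylor measures in the ample locus.

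Next I would split the contact set along $\partial B$. Taking $B$ open, one has $\p=u$ on $X\setminus B$ (in particular on $\partial B$) and $\p=v$ in $B$, so that
\[
\{w=\p\}=S_1\cup S_2\cup S_3,\quad S_1:=\{w=\p\}\setminus\bar B,\ \ S_2:=\{w=\p\}\cap B,\ \ S_3:=\{w=\p\}\cap\partial B,
\]
a disjoint union with $S_1\cup S_3\subset\{w=u\}$ and $S_2\subset\{w=v\}\cap B$. The decisive tool is the locality of the complex Monge-Amp\`ere operator in the plurifine topology \cite{BT87}: two $\theta$-psh functions of the same singularity type carry the same non-pluripolar measure on the set where they agree. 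Applied to $w\leq u$ on $X$ this gives $\mathbf 1_{\{w=u\}}\MA(w)=\mathbf 1_{\{w=u\}}\MA(u)$, and applied to $w\leq v$ on the open set $B$ it gives $\mathbf 1_{\{w=v\}\cap B}\MA(w)=\mathbf 1_{\{w=v\}\cap B}\MA(v)$.

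Finally I would combine these identities with the supersolution inequalities $\MA(u)\leq e^{u}\nu$ on $X$ and $\MA(v)\leq e^{v}\nu$ in $B$, using $w=u$ on $S_1\cup S_3$ and $w=v$ on $S_2$:
\[
\MA(w)=\mathbf 1_{S_1\cup S_3}\MA(u)+\mathbf 1_{S_2}\MA(v)\leq \mathbf 1_{S_1\cup S_3}e^{u}\nu+\mathbf 1_{S_2}e^{v}\nu=\mathbf 1_{\{w=\p\}}e^{w}\nu\leq e^{w}\nu,
\]
the first equality using that $\MA(w)$ is supported on $\{w=\p\}=S_1\cup S_2\cup S_3$ together with the two locality identities. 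This is precisely the desired pluripotential supersolution inequality. The main obstacle is the boundary piece $S_3\subset\partial B$, since a priori $\MA(w)$ could charge the real hypersurface $\partial B$ (as Example~\ref{exa: orthogonal relation false} shows when the obstacle jumps). It is exactly the matching condition that removes this difficulty twice over: it makes $\p$ continuous across $\partial B$, hence quasi-lsc, which is what lets Proposition~\ref{prop: ort} confine $\MA(w)$ to $\{w=\p\}$; and, because $\p=u$ on $\partial B$, the contact points on $\partial B$ lie in $\{w=u\}$ and are absorbed by the \emph{global} identity $\mathbf 1_{\{w=u\}}\MA(w)=\mathbf 1_{\{w=u\}}\MA(u)$, so that no separate boundary analysis or delicate balayage near $\partial B$ is needed.
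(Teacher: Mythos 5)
Your route is genuinely different from the paper's. The paper proves Lemma \ref{lem:gluing} by the Berman approximation scheme: it solves $\MA(\varphi_\beta)=e^{\beta(\varphi_\beta-\p)}e^{\varphi_\beta}\nu$, shows $\varphi_\beta\le u$ by the global comparison principle, shows $\varphi_\beta\le v$ in $B$ by a rather delicate local comparison argument (the ``technical detour''), and then lets $\beta\to+\infty$ using \ref{thmB} to identify the limit with $P_{\theta,\nu}(\p)=P_\theta(\p)$. You instead confine $\MA(w)$, $w:=P_\theta(\p)$, to the contact set $\{w=\p\}$ via the orthogonality theorem for quasi-lsc obstacles, split that set along $\partial B$, and compare measures on each piece; this is shorter and, as you note, needs no boundary analysis at all. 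However, there is one genuine gap in the key step.

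The ``decisive tool'' you invoke is false as stated: two $\theta$-psh functions of the same singularity type need \emph{not} carry the same non-pluripolar Monge-Amp\`ere measure on the set where they agree, and plurifine locality (\cite{BT87}, \cite{BEGZ10}) cannot give this, because that locality applies to plurifine \emph{open} sets, whereas a contact set $\{w=u\}$ with $w\le u$ is plurifine \emph{closed}. Concretely, in dimension one take $\varphi_1=\max(\log|z|,-2)\le \varphi_2=\max(\log|z|,-1)$ near the origin: they agree exactly on $\{|z|\ge e^{-1}\}$, yet $dd^c\varphi_2$ is the uniform measure on the circle $\{|z|=e^{-1}\}$, which lies inside the agreement set, while $dd^c\varphi_1$ puts no mass there. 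So your identities $\mathbf{1}_{\{w=u\}}\MA(w)=\mathbf{1}_{\{w=u\}}\MA(u)$ and $\MA(w)=\mathbf{1}_{S_1\cup S_3}\MA(u)+\mathbf{1}_{S_2}\MA(v)$ are wrong in general. Fortunately your final chain only uses ``$\le$'', and the inequality in that direction is true: if $w\le u$ have the same singularity type, then $\mathbf{1}_{\{w=u\}}\MA(w)\le \mathbf{1}_{\{w=u\}}\MA(u)$ (the contact-set inequality, see \cite{DDL16}). It can be proved with exactly the tools of Section \ref{sec: preliminaries}: set $u_\e:=\max(w,u-\e)$, so that $\mathbf{1}_{\{w=u\}}\MA(u_\e)=\mathbf{1}_{\{w=u\}}\MA(w)$ by plurifine locality on the plurifine \emph{open} set $\{w>u-\e\}\supset\{w=u\}$; since $u_\e\ge u-\e$ and $u_\e\nearrow u$, Lemma \ref{lem: easy convergence in cap} gives convergence in capacity, and Lemma \ref{lem: plurifine convergence} applied to the plurifine closed set $\{u\le w\}=\{w=u\}$ yields $\limsup_\e \mathbf{1}_{\{w=u\}}\MA(u_\e)\le \mathbf{1}_{\{w=u\}}\MA(u)$. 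The same argument, run locally in $B$ with a potential of $\theta$, gives $\mathbf{1}_{\{w=v\}\cap B}\MA(w)\le \mathbf{1}_{\{w=v\}\cap B}\MA(v)$. With these replacements your decomposition gives $\MA(w)\le \mathbf{1}_{S_1\cup S_3}e^{u}\nu+\mathbf{1}_{S_2}e^{v}\nu=\mathbf{1}_{\{w=\p\}}e^{w}\nu\le e^w\nu$, and the proof closes. Two smaller points: the quasi-lower-semicontinuity of $\p$ deserves an actual argument (quasi-continuity of $v$ is a priori available only on compact subsets of $B$, so one must exhaust $B$, use comparability of local and global capacities there, and use the matching condition $v-u\to 0$ on $\partial B$ to handle boundary points), and $w\le v$ on $B$ holds everywhere, not just quasi-everywhere, because two $\theta$-psh functions comparable almost everywhere are comparable everywhere. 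Once repaired along these lines, your argument is a correct and appreciably more elementary alternative to the paper's proof.
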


\begin{proof} The proof uses the machinery we have developed so far. It consists in showing that $P_{\theta,\nu} (\psi)$ is a pluripotential supersolution, which moreover coincides with $P_{\theta} (\psi)$, by using  the Berman approximation process.

For each $\beta>1$ let $\varphi_{\beta}$ be the unique function in $\mathcal{E}^1(X,\theta)$ such that (see \cite{BBGZ13}) 
\[
\MA(\varphi_{\beta}) = e^{\beta (\varphi_{\beta}-\psi)} e^{\varphi_{\beta}} \nu
=e^{(\beta+1) (\varphi_{\beta}-\psi)} e^{\p} \nu.
\]
Since $\psi \leq u$ and $\MA(u)\leq e^{u}\mu$ one can check that $u$ is a supersolution of the above equation. It follows from the comparison principle (see Proposition \ref{prop: comparison principle exponential}) that $\varphi_{\beta}\leq u$, for all $\beta>0$. 

We claim that $\varphi_{\beta}\leq v$ in $B$, for all $\beta>0$. This could follow from the local comparison principle, if we knew that $\varphi_\beta$ belongs by restriction to a local finite energy class in $B$. Since the definition and properties of $\varphi_\beta$ are global in nature, we need to make a technical detour.

Fix $\beta>0$ and let $g$ be a local potential of $\theta$ in a neighborhood of $B$ (i.e. $dd^c g =\theta$). Fix $\rho$ a negative strictly psh  function in $B$.  Set 
$$
\phi_{\beta,j}:= g+ \max(\varphi_{\beta}, V_{\theta}-j), \, j\in \mathbb{N}
$$
 and note that $(dd^c \phi_{\beta,j})^n$ converges to $(\theta +dd^c \varphi_{\beta})^n$ as $j\to +\infty$ in the strong sense of Borel measures on $B$. Since $v$ is bounded and $v=u\geq \varphi_{\beta}$ on $\partial B$ it follows that, for $j$ big enough, $\phi_{\beta,j} \leq v$ on  $\partial B$. Fix $\vep>0$ and set 
\[
U_{\beta,\vep,j}:= B\cap \{g+v < \vep \rho+\phi_{\beta,j}\}, \ U_{\beta,\vep}:= B\cap \{v < \vep \rho + \varphi_{\beta}\}.
\]
Observe that $U_{\beta,\vep}\subset U_{\beta,\vep,j} \Subset B$ and $ \{v\leq \vep \rho + \varphi_{\beta}\}\subset \{v<\varphi_{\beta}\}$. The comparison principle for bounded psh functions \cite[Theorem 4.1]{BT82} yields
\begin{eqnarray*}
	\int_{U_{\beta,\vep}} [\vep^n (dd^c \rho)^n + (dd^c \phi_{\beta,j})^n] &\leq &   \int_{U_{\beta,\vep,j}} (dd^c (\vep \rho + \phi_{\beta,j}))^n\\
	& \leq & \int_{U_{\beta,\vep,j}} (dd^c (g+v))^n.
\end{eqnarray*}
Letting $j\to +\infty$ and using that $v$ is a supersolution we obtain 
\[
\int_{U_{\beta,\vep}} [\vep^n (dd^c \rho)^n + e^{\beta(\varphi_{\beta}-v)}e^{\varphi_{\beta}} d\nu] \leq \int_{\{v<\varphi_{\beta}\}} e^{v} d\nu. 
\] 
In $U_{\beta,\vep}$ we have $\varphi_{\beta} >v$. Thus letting $\vep\to 0$ in the above inequality we obtain $\nu(v<\varphi_{\beta})=0$. 
Since $\rho$ is strictly psh we conclude that the set $U_{\beta,\vep}$, and hence also the set $\{v<\varphi_{\beta}\}$, has Lebesgue measure zero. It is thus empty, proving the claim.

 Thus  $\varphi_{\beta}\leq \psi$ on $X$ for all $\beta>0$. It follows that $\MA(\varphi_{\beta})\leq e^{\varphi_{\beta}} \nu$. As $\beta\to +\infty$ \ref{thmB} shows that $\varphi_{\beta}$ converges in energy to $P_{\theta,\nu}(\psi)$ which satisfies $P_{\theta,\nu}(\psi)\leq \psi$ quasi everywhere on $X$. Thus $P_{\theta,\nu}(\psi) \leq P_\theta (\psi)$.  Since the inequality  $ P_\theta (\psi) \leq P_{\theta,\nu}(\psi)$ is  always satisfied for a non pluripolar measure, we obtain the equality $P_{\theta,\nu}(\psi) = P_\theta (\psi)$.   By continuity of the Monge-Amp\`ere measure along convergence in energy, this eventually shows that $\MA(P(\psi)) \leq e^{P(\psi)}\nu$. 
\end{proof}

 \section{Further applications} \label{sec:applications}

\subsection{Controlling the mass of viscosity super-solutions}
Let $\theta$ be a smooth closed $(1,1)$-form such that $[\theta]$ is big. 
\begin{pro}
	Let $f:X \rightarrow \R^+$ be a continuous function. There exists a viscosity super-solution of 
	\[
	(\theta +dd^c u)^n = f\omega^n
	\]
	if and only if $\int_X f\omega^n \geq \vol(\theta)$.
\end{pro}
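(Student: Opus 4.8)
The plan is to prove the two implications separately, invoking \ref{thmA} for the necessity and the solvability of degenerate complex Monge-Amp\`ere equations together with the viscosity/pluripotential dictionary for the sufficiency.

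For the necessity, suppose $u$ is a viscosity super-solution of $(\theta+dd^c u)^n=f\omega^n$. Being lower semicontinuous on the compact manifold $X$, it is bounded from below, so $P_\theta(u)$ is a well-defined $\theta$-psh function with minimal singularities. Applying Theorem~\ref{thm: supersolution general right hand side} with the (trivially non-decreasing) right-hand side $F(x,t):=f(x)$ shows that $P_\theta(u)$ is a pluripotential super-solution, that is $\MA(P_\theta(u))\leq f\omega^n$. Since $P_\theta(u)$ has minimal singularities it belongs to $\mathcal E(X,\theta)$, whence $\int_X\MA(P_\theta(u))=\vol(\theta)$; integrating the inequality over $X$ yields $\vol(\theta)\leq\int_X f\omega^n$.

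For the sufficiency, assume $\int_X f\omega^n\geq\vol(\theta)$. As $[\theta]$ is big we have $\vol(\theta)>0$, so $c:=\vol(\theta)/\int_X f\omega^n\in(0,1]$ is well defined and $\tilde f:=cf$ is a non-negative continuous density with $\tilde f\leq f$ and $\int_X\tilde f\omega^n=\vol(\theta)$. The measure $\tilde f\omega^n$ is non-pluripolar of total mass $\vol(\theta)$, so the solvability of \eqref{eq: MAlambda} in the case $\lambda=0$ (see \cite{BEGZ10,BBGZ13}) provides $\varphi\in\mathcal E^1(X,\theta)$ with $\MA(\varphi)=\tilde f\omega^n$; since $\tilde f$ is bounded, $\varphi$ has minimal singularities. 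Then $\MA(\varphi)=\tilde f\omega^n\leq f\omega^n$, so $\varphi$ is a pluripotential super-solution of $(\theta+dd^c u)^n=f\omega^n$, and by the partial converse to \ref{thmA} recalled after Theorem~\ref{thm: visco super vs pluri compact} (\cite[Lemma 4.7]{EGZ11}) its lower semicontinuous regularization $\varphi_*$ is a viscosity super-solution of the same equation.

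The delicate point is the boundedness required to apply \cite[Lemma 4.7]{EGZ11}: for a general big class the solution $\varphi$ only has minimal singularities and may be globally unbounded (this already happens for $V_\theta$ when $[\theta]$ is not nef). I would circumvent this exactly as in the proof of Theorem~\ref{thm: visco super vs pluri compact}, passing to the ample locus $\Omega=\Amp(\theta)$ on which $\varphi$ is locally bounded and the local transition from pluripotential to viscosity super-solution is valid; when $[\theta]$ is nef (in particular K\"ahler) the solution is globally bounded and $\varphi_*$ is a viscosity super-solution on all of $X$.
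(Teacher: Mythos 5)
Your proposal is correct and takes essentially the same route as the paper: the necessity direction is identical (apply Theorem~\ref{thm: supersolution general right hand side} to get $\MA(P_{\theta}(u))\leq f\omega^n$ and integrate, using that $P_{\theta}(u)$ has minimal singularities and hence full mass $\vol(\theta)$), and the sufficiency direction uses the same normalization $c=\vol(\theta)/\int_X f\omega^n\leq 1$ and the pluripotential solution of $\MA(\varphi)=cf\omega^n$. The only minor variation is in the final transfer: the paper cites \cite{EGZ11} for the fact that this pluripotential solution is a viscosity \emph{solution} (hence a viscosity supersolution of the equation with density $f$), whereas you first weaken to a pluripotential supersolution and then invoke the partial converse \cite[Lemma 4.7]{EGZ11} on the lsc regularization — and your explicit handling of the boundedness/ample locus issue in the big case is a point the paper's two-line proof glosses over.
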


\begin{proof}
	Assume that $\int_X f\omega^n \geq \vol(\theta)$. Let $\varphi\in \psh(X,\theta)$ be the unique function with minimal singularities normalized by $\sup_X \varphi=0$ such that $\MA(\varphi)=cf\omega^n$ where $c>0$ is a normalization constant. It follows from \cite{EGZ11} that $\varphi$ is also a viscosity solution. Since $c\leq 1$ the result follows. 
	
	Conversely, assume that $u$ is a viscosity supersolution.  It follows from Theorem  \ref{thm: supersolution general right hand side} that  $P_{\theta} (u)$ is also a pluripotential supersolution, hence the inequality follows.
\end{proof}

The connection between pluripotential and viscosity supersolutions of complex Monge-Amp\`ere equations
allows us to derive the following interesting inequality:

 \begin{pro}\label{prop: weak BD12}
 	Assume that $\theta$ is a closed smooth $(1,1)$-form such that $[\theta]$ is big. 
 	Then the envelope $V_{\theta}$ satisfies
 	\[
 	\MA(V_{\theta}) \leq {\bf 1}_{\{V_{\theta}=0\}} \theta^n. 
 	\]
 \end{pro}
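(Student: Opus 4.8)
The plan is to apply Theorem A to the \emph{smooth} data $h\equiv 0$, whose envelope is $P_\theta(0)=V_\theta$, and then to combine the resulting differential inequality with the orthogonality relation of Lemma \ref{lem: ort} and a pointwise semipositivity observation on the contact set. First I would write $\theta_+^n=f\,\omega^n$, where $f:=\theta_+^n/\omega^n\ge 0$ is continuous on $X$ (the map sending a smooth $(1,1)$-form to the density of its positive Monge--Amp\`ere part is continuous, since a mixed-signature form has vanishing positive part while $\det$ is continuous on the positive cone). I then claim the constant $0$ is a viscosity supersolution of $(\theta+dd^c\phi)^n=e^{\phi}f\,\omega^n$. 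Indeed, if $q$ is a $\mathcal C^2$ lower test function for $0$ at $x_0$, then $-q$ has a local minimum at $x_0$, so $dd^c q(x_0)\le 0$; thus $\theta(x_0)+dd^c q(x_0)\le\theta(x_0)$, and since $\alpha\le\beta$ implies $\det_+(\alpha)\le\det_+(\beta)$ for Hermitian forms, we get $(\theta+dd^c q)_+^n(x_0)\le\theta_+^n(x_0)=e^{q(x_0)}f(x_0)\,\omega^n$, which is exactly the required inequality. By Theorem \ref{thm: visco super vs pluri compact} the envelope $V_\theta=P_\theta(0)$ is therefore a pluripotential supersolution:
\[
\MA(V_\theta)\le e^{V_\theta}f\,\omega^n=e^{V_\theta}\,\theta_+^n .
\]

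Next I would localize the mass of $\MA(V_\theta)$ on the contact set. Since the constant $0$ is lower semicontinuous, its lower-semicontinuity set is all of $X$, so Lemma \ref{lem: ort} applied to $h\equiv 0$ shows that $\MA(V_\theta)$ puts no mass on $\{V_\theta<0\}$, that is $\MA(V_\theta)={\bf 1}_{\{V_\theta=0\}}\MA(V_\theta)$. Combining this with the inequality above and using $e^{V_\theta}=1$ on $\{V_\theta=0\}$ gives
\[
\MA(V_\theta)={\bf 1}_{\{V_\theta=0\}}\MA(V_\theta)\le {\bf 1}_{\{V_\theta=0\}}\,\theta_+^n .
\]

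It remains to upgrade $\theta_+^n$ to $\theta^n$ on the contact set, which is the only genuinely delicate point. Here I would use the elementary viscosity characterization of plurisubharmonicity: writing $\theta=dd^c\rho$ for a smooth local potential, the function $V_\theta+\rho$ is plurisubharmonic, and a plurisubharmonic function touched from above at $x_0$ by a $\mathcal C^2$ function $g$ must satisfy $dd^c g(x_0)\ge 0$. At any $x_0\in\{V_\theta=0\}$ the constant $0$ is an upper test function for $V_\theta$ (since $V_\theta\le 0=V_\theta(x_0)$), hence $\rho$ is an upper test for $V_\theta+\rho$, whence $\theta(x_0)=dd^c\rho(x_0)\ge 0$. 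Thus $\{V_\theta=0\}\subset\{\theta\ge 0\}$, where $\theta_+^n=\theta^n$, so ${\bf 1}_{\{V_\theta=0\}}\theta_+^n={\bf 1}_{\{V_\theta=0\}}\theta^n$ and the claimed bound follows. I expect this last step to be the main obstacle: the substantive inputs (Theorem A and the orthogonality lemma) are already available, and the only fresh argument needed is to confine the contact set inside the semipositivity locus of $\theta$, which is precisely what lets the sharp form $\theta^n$ appear in place of $\theta_+^n$.
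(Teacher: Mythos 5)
Your proof is correct and follows essentially the same route as the paper's: the paper likewise observes that $0$ is a viscosity supersolution of $(\theta+dd^c u)^n=e^u\theta_+^n$ (with $\theta_+$ the pointwise positive part), invokes Theorem \ref{thm: visco super vs pluri compact} to get $\MA(V_\theta)\le e^{V_\theta}\theta_+^n$, and then uses the concentration of $\MA(V_\theta)$ on the contact set $\{V_\theta=0\}$. The only difference is that your final step --- showing via the upper-test argument that the contact set lies in $\{\theta\ge 0\}$, so that $\theta_+^n$ may be replaced by $\theta^n$ there --- is compressed in the paper into the words ``the conclusion follows,'' so you have usefully made explicit a detail (together with the continuity of the density $\theta_+^n/\omega^n$) that the paper leaves implicit.
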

 
This is a particular case of an important result of Berman and Demailly \cite{BD12}, which uses strong regularity
information on the function $V_{\theta}$.
We provide a proof of independent interest. A slightly different proof has recently been given in \cite{Ber13},  \cite{DDL16} using the viscosity theory developed in \cite{EGZ11}.

\begin{proof}
	The function $0$ is a viscosity super-solution of the equation 
	\[
	(\theta +dd^c u)^n = e^{u} \theta_+^n,
	\]
	where $\theta_+$ is defined pointwise to be $\theta$ if $\theta\geq 0$ and zero otherwise. It follows from Theorem \ref{thm: visco super vs pluri compact} that $V_{\theta}$ is a pluripotential super-solution of the same equation, thus $\MA(V_{\theta}) \leq e^{V_{\theta}}\theta_+^n$ in the pluripotential sense. As $\MA(V_{\theta})$ is concentrated on the contact set $\{V_{\theta}=0\}$, the conclusion follows. 
\end{proof}

\subsection{Examples of viscosity supersolutions}

As the concept of viscosity super-solutions to complex Monge-Amp\`ere equations is relatively new and still a bit mysterious,
it is probably helpful to discuss in some details a few elementary examples.

\begin{pro}\label{prop: two envelopes coincide}
Assume that $u$ is a bounded viscosity supersolution of 
\[
	(\theta +dd^c u)^n = C\omega^n,
\]
where $C>0$ is a constant. Then $P_{\theta,dV}(u) =P_{\theta}(u)$. In other words, if a $\theta$-psh function $\varphi$ satisfies $\varphi\leq u$ almost everywhere with respect to Lebesgue measure then the inequality holds quasi everywhere. 
\end{pro}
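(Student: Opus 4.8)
The plan is to prove the two inequalities separately, the reverse one carrying all the content. The inclusion $P_\theta(u)\le P_{\theta,dV}(u)$ is immediate: every $\varphi\in\psh(X,\theta)$ with $\varphi\le u$ everywhere is in particular $\le u$ for $dV$-a.e.\ point, so the family defining $P_{\theta,dV}(u)$ contains the one defining $P_\theta(u)$, and the larger family yields the larger envelope. For the converse I would argue as follows. Write $\hat u:=P_\theta(u)$ and $\tilde u:=P_{\theta,dV}(u)$; both functions have minimal singularities (Propositions \ref{prop: basic property of envelope} and \ref{pro: modified envelope is well-defined}), hence lie in $\mathcal E(X,\theta)$, so by the domination principle (Proposition \ref{prop: domination principle}) it suffices to show that $\MA(\hat u)$ puts no mass on $\{\hat u<\tilde u\}$. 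This would force $\hat u\ge\tilde u$, giving the desired equality $P_{\theta,dV}(u)=P_\theta(u)$.

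To control where $\MA(\hat u)$ lives I would combine two complementary facts. First, the supersolution hypothesis enters through Theorem \ref{thm: supersolution general right hand side}: the equation $(\theta+dd^c u)^n=C\omega^n$ is of the form $(\theta+dd^cu)^n=F(x,u)\omega^n$ with $F\equiv C$ continuous and (weakly) non-decreasing, so $\hat u$ is a pluripotential supersolution, that is
\[
\MA(\hat u)\le C\,\omega^n .
\]
As $\omega^n$ is a smooth volume form, mutually absolutely continuous with $dV$, this makes $\MA(\hat u)$ absolutely continuous with respect to Lebesgue measure. Second, a viscosity supersolution is lower semicontinuous, so the lower-semicontinuity set $L(u)$ is all of $X$; Lemma \ref{lem: ort} then shows that $\MA(\hat u)$ gives no mass to $\{\hat u<u\}$. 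Together with $\hat u\le u$ quasi-everywhere (Proposition \ref{prop: basic property of envelope}) and the fact that $\MA(\hat u)$ ignores pluripolar sets, this confines $\MA(\hat u)$ to the contact set $\{\hat u=u\}$.

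Putting the two together, on the contact set the strict inequality $\hat u<\tilde u$ reads $u=\hat u<\tilde u$, so
\[
\{\hat u<\tilde u\}\cap\{\hat u=u\}\subset\{\tilde u>u\}.
\]
Since $\tilde u\le u$ holds $dV$-almost everywhere by Proposition \ref{pro: modified envelope is well-defined} applied to $\mu=dV$, the set $\{\tilde u>u\}$ is Lebesgue-negligible, and by the absolute continuity above $\MA(\hat u)$ gives it no mass. As $\MA(\hat u)$ also gives no mass to the complement of $\{\hat u=u\}$, I conclude $\MA(\hat u)(\{\hat u<\tilde u\})=0$, and the domination principle closes the argument.

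The decisive point — and the reason the conclusion can fail without the supersolution hypothesis, as in Example \ref{ex: Berman convergence does not hold for general data} — will be the absolute continuity of $\MA(\hat u)$ furnished by Theorem \ref{thm: supersolution general right hand side}. This is exactly what upgrades ``$\varphi\le u$ Lebesgue-a.e.'' into ``$\varphi\le u$ quasi-everywhere'': it forbids $\MA(\hat u)$ from charging a Lebesgue-null yet non-pluripolar set on which $\tilde u$ could strictly exceed $u$.
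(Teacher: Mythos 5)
Your proof is correct, and it takes a genuinely different route from the paper's. The paper re-runs the approximation scheme of Section \ref{sect: approximation}: it solves $\MA(\varphi_\beta)=e^{\beta(\varphi_\beta-u)}\omega^n$, proves $\varphi_\beta\le u+\beta^{-1}\log C$ on $\Amp(\theta)$ by the viscosity comparison principle (with the same approximation detour as in Theorem \ref{thm: visco super vs pluri compact}, since $e^{-\beta u}$ need not be continuous), and then identifies the increasing limit of $\varphi_\beta$ with $P_{\theta,dV}(u)$ via \ref{thmB}, so that $P_{\theta,dV}(u)\le u$ off a pluripolar set and hence $P_{\theta,dV}(u)\le P_{\theta}(u)$. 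You instead use Theorem \ref{thm: supersolution general right hand side} as a black box to get $\MA(P_{\theta}(u))\le C\omega^n$, confine $\MA(P_{\theta}(u))$ to the contact set $\{P_{\theta}(u)=u\}$ via Lemma \ref{lem: ort} (legitimate: viscosity supersolutions are lower semicontinuous, so $L(u)=X$, and $\{P_{\theta}(u)>u\}$ is pluripolar, hence invisible to the non-pluripolar measure), note that inside the contact set $\{P_{\theta}(u)<P_{\theta,dV}(u)\}$ lies in the Lebesgue-null set $\{P_{\theta,dV}(u)>u\}$, and close with Proposition \ref{prop: domination principle}; each step is consistent with the hypotheses of the results you cite, and there is no circularity, since none of them depends on this proposition. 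What your route buys is modularity and a sharper diagnosis of the role of the supersolution hypothesis: you never re-run the approximation nor re-invoke the viscosity comparison principle, and your argument in fact proves the more general statement that $P_{\theta,dV}(u)=P_{\theta}(u)$ for any bounded lower semicontinuous $u$ whose envelope $P_{\theta}(u)$ has Monge--Amp\`ere measure absolutely continuous with respect to Lebesgue measure. What the paper's route buys is independence from Theorem \ref{thm: supersolution general right hand side}, whose proof the paper only sketches; of course, since \ref{thmA} is itself established by the approximation scheme, both arguments ultimately rest on the same machinery.
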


\begin{proof}
	We use the convergence method developed in Section \ref{sect: approximation}. For each $\beta>1$ let $\varphi_{\beta}$ be the unique $\theta$-psh function with minimal singularities such that 
	\begin{equation}
	\label{eq: viscosity super application}
	\MA(\varphi_{\beta}) =e^{\beta(\varphi_{\beta}-u)}\omega^n. 
	\end{equation}
	
We claim that $\varphi_{\beta}\leq u + \frac{\log C}{\beta}$ in the ample locus of $\theta$, for any $\beta>1$. Indeed, observe that $u+\log(C)/\beta$ is a viscosity supersolution of \eqref{eq: viscosity super application}. Using an approximation argument and the viscosity comparison principle as in the proof of Theorem \ref{thm: visco super vs pluri compact} we can show that $\varphi_{\beta} \leq u+  \frac{\log C}{\beta}$ in $\Amp(\theta)$. But by Theorem \ref{thm Berman generalized convergence} we know that $\varphi_{\beta}$ converges in energy to the modified envelope $P_{\theta,dV}(u)$.   It thus follows that $P_{\theta,dV}(u)=P_{\theta}(u)$. 
\end{proof}

\begin{pro} For $n=1$,
the viscosity supersolutions  
$$
(dd^c v)_+ \leq dd^c |z|^2
$$
are precisely the functions $v$ such that $v-|z|^2$ is super-harmonic.
\end{pro}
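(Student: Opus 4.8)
The plan is to reduce everything to the one–variable dictionary between the complex operator $dd^c$ and the ordinary Laplacian, and then to recognize the resulting condition as the viscosity characterization of superharmonicity. First I would record the standard facts: in complex dimension one, $dd^c |z|^2$ is a strictly positive constant-coefficient form, and for any $C^2$ function $q$ the form $dd^c q$ is a fixed positive multiple of $\Delta q\, dV$. In particular, at any point $x_0$ the inequality $dd^c q \leq dd^c |z|^2$ is equivalent to $dd^c(|z|^2 - q)\geq 0$, i.e. to $\Delta(q-|z|^2)(x_0)\leq 0$. Throughout, $v$ (hence $w:=v-|z|^2$) is understood to be lower semicontinuous, the natural class for supersolutions.

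The key observation is that the truncation $(\cdot)_+$ in the supersolution inequality is harmless here because the right-hand side is strictly positive. Indeed, let $q$ be a $C^2$ lower test function for $v$ at $x_0$. If $dd^c q(x_0)<0$, then $(dd^c q)_+(x_0)=0\leq dd^c|z|^2(x_0)$ automatically, while if $dd^c q(x_0)\geq 0$ the condition $(dd^c q)_+\leq dd^c|z|^2$ is just $dd^c q\leq dd^c|z|^2$. In both cases the supersolution requirement at $x_0$ is equivalent to $dd^c q(x_0)\leq dd^c|z|^2(x_0)$, that is, to $\Delta(q-|z|^2)(x_0)\leq 0$.

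Next I would pass to $w:=v-|z|^2$ and exploit that $|z|^2$ is smooth. The map $q\mapsto \tilde q:=q-|z|^2$ is a bijection between $C^2$ lower test functions of $v$ at $x_0$ and $C^2$ lower test functions of $w$ at $x_0$, and it turns the condition $\Delta(q-|z|^2)(x_0)\leq 0$ into $\Delta\tilde q(x_0)\leq 0$. Hence $v$ is a viscosity supersolution of $(dd^c v)_+\leq dd^c|z|^2$ if and only if every $C^2$ lower test function of $w$ has nonpositive Laplacian at its contact point, which is exactly the statement that $w$ is a viscosity supersolution of $\Delta w=0$.

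Finally I would invoke the classical equivalence from one-variable potential theory that a lower semicontinuous function is a viscosity supersolution of the Laplace equation if and only if it is superharmonic; this yields the asserted characterization. The only points requiring care, and the closest thing to an obstacle, are the book-keeping of the $(\cdot)_+$ truncation, handled above via the strict positivity of $dd^c|z|^2$, and the identification of viscosity supersolutions of $\Delta=0$ with classical superharmonic functions. Both are routine once the dictionary is in place.
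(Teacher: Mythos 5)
Your proof is correct and follows essentially the same route as the paper: strip the $(\cdot)_+$ truncation using positivity of $dd^c|z|^2$, transfer lower test functions between $v$ and $v-|z|^2$, and invoke the classical identification of viscosity super/subsolutions of the Laplace equation with superharmonic/subharmonic functions (the paper cites H\"ormander for this, phrasing the argument dually in terms of $|z|^2-v$ being a viscosity subsolution, hence subharmonic). The only difference is presentational: you spell out both directions of the equivalence explicitly, while the paper writes out only the forward implication and leaves the easy converse implicit.
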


\begin{proof}
Let $q$ be a $\mathcal{C}^2$ upper test for $|z|^2-v$ at $a\in \mathbb{C}$. Then the function $|z|^2-q$ is a lower test for $v$ at $a$. It  follows that $(dd^c (|z|^2-q))_+ \leq dd^c |z|^2$ at $a$, hence $dd^c q \geq 0$ at $a$. It thus follows that $|z|^2-v$ is a viscosity subsolution, hence it is subharmonic as follows from \cite[Prop. 3.2.10, p. 147]{Hor94}. 
\end{proof}

 One could expect a similar property to hold in higher dimension: if 
 $$
(dd^c v)^n_+ \leq (dd^c |z|^2)^n
$$
in the viscosity sense, one would like to conclude that $v-|z|^2$ is $1$-concave. 
This is however not true in general :

\begin{pro}
	\label{pro: supersolution not quasi continuous}
	Let $B$ be the unit ball in $\C^2$ and consider the function $u$ defined by $u(z_1,z_2)=-1$ if $|z_1|=|z_2|$ and $u(z_1,z_2)=0$ elsewhere. Then $u$ is a viscosity supersolution of the Monge-Amp\`ere equation
	\[
	(dd^c u)^n =0. 
	\]
\end{pro}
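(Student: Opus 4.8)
The plan is to verify the defining property of a viscosity supersolution directly, after first simplifying what must be checked. By definition $u$ is a viscosity supersolution of $(dd^c u)^n = 0$ if for every $x_0 \in B$ and every $\mathcal{C}^2$ lower test $q$ of $u$ at $x_0$ one has $(dd^c q)_+^n \leq 0$ at $x_0$; because of the truncation $(\cdot)_+$ this is equivalent to saying that the complex Hessian $H_q(x_0) := (q_{i\bar{j}}(x_0))$ is never positive definite. Indeed, if $H_q(x_0)$ fails to be semipositive then $(dd^c q)_+ = 0$ at $x_0$ by definition, while if it is semipositive but degenerate then $\det H_q(x_0) = 0$, so $(dd^c q)_+^n = (dd^c q)^n = 0$; in both situations the required inequality holds. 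So the whole problem reduces to showing that no $\mathcal{C}^2$ lower test of $u$ can have a strictly positive complex Hessian at the contact point.

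Write $S := \{(z_1,z_2)\in B : |z_1| = |z_2|\}$, so that $u = -1$ on $S$ and $u = 0$ on $B\setminus S$. First I would dispose of the easy case $x_0 \notin S$: since $S$ is closed, $u \equiv 0$ on a neighborhood of $x_0$, so any lower test $q$ satisfies $q \leq 0 = q(x_0)$ there, whence $x_0$ is a local maximum of $q$ and $H_q(x_0) \leq 0$, which is certainly not positive definite.

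The substantive case is $x_0 \in S$, where $u(x_0) = -1$. The key geometric observation is that $S$ is foliated by complex lines: if $x_0 = (a,b)$ with $|a| = |b|$, then the entire line $L := \{(a\zeta, b\zeta) : \zeta\in\C\}$ lies in $S$, because $|a\zeta| = |a|\,|\zeta| = |b|\,|\zeta| = |b\zeta|$; at the origin one simply selects any such line, for instance $\zeta \mapsto (\zeta,\zeta)$. Given a lower test $q$, I would restrict it to $L$ and set $g(\zeta) := q(a\zeta, b\zeta)$. Since $L \subset S$ forces $q \leq u = -1$ along $L$ near $x_0$, while $g$ equals $-1$ at the parameter corresponding to $x_0$, the function $g$ attains a local maximum there, so its complex Laplacian is nonpositive. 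By the chain rule this reads $\sum_{i,j} q_{i\bar{j}}(x_0)\, v_i \bar{v}_j \leq 0$ with $v = (a,b) \neq 0$, which shows $H_q(x_0)$ takes a nonpositive value on a nonzero vector and hence cannot be positive definite. Combining the two cases proves the proposition.

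The only place that genuinely uses the specific shape of $u$, rather than soft maximum-principle arguments, is the identification of the complex lines sitting inside the level set $\{|z_1| = |z_2|\}$; I expect this to be the main (and essentially sole) point requiring insight, after which the restriction-to-a-line trick reduces everything to the elementary fact that a $\mathcal{C}^2$ function has nonpositive complex Laplacian at a local maximum. I anticipate no further difficulty: in particular the singularity of $S$ at the origin is harmless, since the complex-line argument applies there verbatim.
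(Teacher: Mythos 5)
Your proof is correct and takes essentially the same route as the paper's: both arguments rest on restricting a $\mathcal{C}^2$ lower test to a complex line contained in the set $\{|z_1|=|z_2|\}$ and concluding via the maximum principle for the complex Laplacian that the Hessian of the test cannot be positive definite at the contact point, so $(dd^c q)_+^2=0$ there. If anything, your write-up is slightly more complete than the paper's, which spells out the line argument only at diagonal points $x_0=(a,a)$ and handles points off the singular set by citing \cite{EGZ11}, whereas you exhibit the line $\zeta\mapsto(a\zeta,b\zeta)$ through an arbitrary contact point $(a,b)$ with $|a|=|b|$ and dispose of the off-set case by an elementary local-maximum argument.
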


\begin{proof}
	We set $D:=\{(z_1,z_2)\in B\, :\, |z_1|=|z_2|\}$. If $x_0\in B\setminus D$ 
	then $u$ is smooth near $x_0$ and the result follows from \cite{EGZ11}.
	
	Assume now that $x_0=(a,a)\in D$ and $q$ is a lower test function for $u$ at $x_0$. 
The function $p(z):= q(z,z)$  is a lower test function for the constant $-1$ near $a$.
 It follows that $dd^c p$ is not positive at $a$
 hence $(dd^c q)^2_{+}(x_0)=0$. Thus $u$ is a viscosity supersolution of the above equation. 
\end{proof}

\begin{rem}
We let the reader check that the Monge-Amp\`ere envelope $P_{B}(u)$ is identically $-1$ in $B$.  Its Monge-Amp\`ere measure is thus identically $0$. This is consistent with Theorem \ref{thm:supersolloc}. 	
\end{rem}

The example in Proposition \ref{pro: supersolution not quasi continuous} indicates that  viscosity supersolutions (in a local context and without boundary constraints) are in general not  quasi-continuous. One can not construct a similar example on a compact K\"ahler manifold. 
More precsiely, we have the following:

\begin{lem} Let $(X,\omega)$ be a compact K\"ahler manifold of dimension $n$ and $\theta$ be a closed smooth semipositive $(1,1)$-form on $X$ such that $\int_X \theta^n>0$. Let $E \subset X$ be a closed subset of $X$ that has zero Lebesgue measure. If  the function $u=-{\bf 1}_{E}$  satisfies 
\[
(\theta +dd^c u)^n \leq C\omega^n
\]
in the viscosity sense on $X$, then $E$ is pluripolar. 
\end{lem}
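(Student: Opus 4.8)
The plan is to show that the envelope $\Phi:=P_{\theta}(u)$ is identically $0$, and then to read off the pluripolarity of $E$ directly from the defining inequality $\Phi\le u$.

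\emph{Preliminary reductions.} First I would record two elementary facts. Since $\theta\ge 0$ is semipositive, the constant function $0$ is $\theta$-psh; being the largest $\theta$-psh function bounded above by $0$, it coincides with $V_{\theta}$, so $V_{\theta}\equiv 0$. Moreover, because $\theta$ is semipositive with $\int_X\theta^n>0$, the class $[\theta]$ is nef and big, so the results of Section \ref{sec:viscosity} apply. Next, as $E$ is closed the function $u=-\mathbf{1}_E$ is bounded and lower semicontinuous, hence its lower semicontinuity set satisfies $L(u)=X$; and $u$ is by hypothesis a viscosity supersolution of $(\theta+dd^c w)^n=F(x,w)\,\omega^n$ for the constant (hence non-decreasing) right-hand side $F\equiv C$.

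\emph{Main step.} I would then apply Theorem \ref{thm: supersolution general right hand side} to conclude that $\Phi=P_{\theta}(u)$ is a pluripotential supersolution, i.e. $\MA(\Phi)\le C\,\omega^n$. By Proposition \ref{prop: basic property of envelope}, $\Phi$ is a $\theta$-psh function with minimal singularities (in particular $\Phi\in\Ec(X,\theta)$) satisfying $\Phi\le u\le 0$ quasi everywhere. The heart of the argument is to locate the Monge-Amp\`ere mass of $\Phi$. On the one hand, $E$ has zero Lebesgue measure, so $\omega^n(E)=0$, and the bound $\MA(\Phi)\le C\omega^n$ forces $\MA(\Phi)(E)=0$. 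On the other hand, since $L(u)=X$, Lemma \ref{lem: ort} shows that $\MA(\Phi)$ puts no mass on $\{\Phi<u\}$; as $u=0$ on the open set $X\setminus E$, this gives $\MA(\Phi)\big(\{\Phi<0\}\cap(X\setminus E)\big)=0$. Combining the two facts yields $\MA(\Phi)(\Phi<0)=0$. Applying the domination principle (Proposition \ref{prop: domination principle}) with $\psi=0=V_{\theta}$ gives $\Phi\ge 0$, and since $\Phi\le V_{\theta}=0$ I would conclude $\Phi\equiv 0$.

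\emph{Conclusion and main obstacle.} By Proposition \ref{prop: basic property of envelope} the inequality $\Phi\le u$ holds quasi everywhere, so $0=\Phi\le u$ outside some pluripolar set $N$. Since $u=-1$ on $E$, this forces $E\subset N$, whence $E$ is pluripolar. The delicate point is the mass-localization in the main step: concentrating $\MA(\Phi)$ on the contact set $\{\Phi=0\}$ requires simultaneously the absolute-continuity bound $\MA(\Phi)\le C\omega^n$ coming from Theorem \ref{thm: supersolution general right hand side} (to discard the mass on the Lebesgue-null set $E$) and the orthogonality relation of Lemma \ref{lem: ort} (available precisely because $u$ is lower semicontinuous, so that $L(u)=X$). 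Once these are combined, the domination principle closes the argument immediately; the remaining ingredients, namely $V_{\theta}\equiv 0$ and $\Phi\le u$ quasi everywhere, are routine consequences of semipositivity and of the definition of the envelope.
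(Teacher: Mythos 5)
Your proof is correct, but it takes a genuinely different route from the paper's. The paper's own argument is essentially three lines: it invokes Proposition \ref{prop: two envelopes coincide} (proved just before this lemma) to get $P_{\theta,dV}(u)=P_{\theta}(u)$ for any bounded viscosity supersolution of $(\theta+dd^c u)^n=C\omega^n$; since $u=0$ Lebesgue-almost everywhere, the measure-envelope is $P_{\theta,dV}(0)=V_{\theta}\equiv 0$, hence $P_{\theta}(u)\equiv 0$; and since $P_{\theta}(-{\bf 1}_E)=h_{E,\theta}^*$ is the relative extremal function of $E$, its vanishing characterizes pluripolarity by \cite{GZ05}. You bypass the measure-envelope $P_{\theta,dV}$ entirely: you use Theorem \ref{thm: supersolution general right hand side} to obtain the absolute-continuity bound $\MA(P_{\theta}(u))\leq C\omega^n$, which kills the Monge--Amp\`ere mass on the Lebesgue-null set $E$; you kill the mass on $\{P_{\theta}(u)<u\}$ via the orthogonality relation of Lemma \ref{lem: ort} (legitimately, since $E$ closed makes $u$ lower semicontinuous everywhere); and you then conclude $P_{\theta}(u)\equiv 0$ from the domination principle (Proposition \ref{prop: domination principle}), with the final step ($P_{\theta}(u)\leq u$ quasi-everywhere forces $E$ pluripolar) being an equivalent substitute for the paper's appeal to the extremal-function characterization. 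Neither route is more elementary at bottom, since Proposition \ref{prop: two envelopes coincide} and Theorem \ref{thm: supersolution general right hand side} are both proved by the same Berman approximation scheme; the paper's proof is shorter because its proposition was tailor-made for this lemma, while yours has the merit of isolating exactly where each hypothesis enters --- closedness of $E$ feeds the orthogonality relation, and zero Lebesgue measure interacts with the absolute continuity of $\MA(P_{\theta}(u))$ --- at the cost of the extra mass-localization and domination-principle steps.
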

\begin{proof} Indeed, by Proposition \ref{prop: two envelopes coincide} we have that $P_{\theta,dV}(u)=P_{\theta}(u)$. Therefore since $u = 0$ a.e. in $X$, it follows that  $P_{\theta} (u) = 0$ in  $X$, which implies that  $E$ is pluripolar since $P_{\theta}(u) = h_{E,\theta}^*$ is the relative $\theta$-plurisubharmonic extremal function of $E$ (see \cite{GZ05}).

\end{proof}

Our analysis above motivates the following:
   \begin{ques}\label{GLZ question}
	Assume that $u$ is a bounded lower semicontinuous function on $X$ such that 
\[
(\omega +dd^c u)^n \leq C\omega^n,
\]
holds in the viscosity sense for some positive constant $C$. Is $u$ quasi-continuous on $X$ ? 
\end{ques}

Understanding the regularity properties of viscosity supersolutions is an important problem.
We establish a refined semi-continuity property:

\begin{pro}
Assume that $v$ is a bounded viscosity supersolution of $-(\theta+dd^c v)^n + e^v CdV=0$. Then 
$$
v(a)=\liminf_{x \rightarrow a, x \neq a} v(x)
$$
for all $a \in X$.
\end{pro}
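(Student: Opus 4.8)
The plan is to prove the equality by establishing the two inequalities $v(a)\le \liminf_{x\to a,\,x\ne a} v(x)$ and $v(a)\ge \liminf_{x\to a,\,x\ne a} v(x)$ separately. The first is nothing but the lower semicontinuity of $v$, which is part of the notion of a viscosity supersolution: for every $\varepsilon>0$ one has $v(x)>v(a)-\varepsilon$ near $a$, whence $\liminf_{x\to a,\,x\ne a}v(x)\ge v(a)$. All the content therefore lies in the reverse inequality, which asserts that $v$ cannot have a strict ``downward spike'' at any point, i.e.\ its value at a point can never drop strictly below the liminf of its neighboring values.

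To prove $v(a)\ge \ell$, where $\ell:=\liminf_{x\to a,\,x\ne a}v(x)$, I would argue by contradiction and suppose $v(a)<\ell$. Choosing $\delta>0$ with $v(a)+\delta<\ell$, the definition of the lower limit provides $r>0$ such that $v(x)\ge v(a)+\delta$ whenever $0<d(x,a)<r$. The idea is then to slide a very convex smooth function underneath $v$ that touches it only at $a$. Working in local holomorphic coordinates $z$ centered at $a$ (so that $a$ corresponds to $0$), I would set
\[
q_A(z):=v(a)+A\,|z|^2,\qquad A>0,
\]
on the ball $B_\rho:=\{|z|<\rho\}$ with $\rho:=\min\{r,\sqrt{\delta/A}\}$. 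Then $q_A(0)=v(a)$, and for $0<|z|<\rho$ one has $q_A(z)\le v(a)+A\rho^2\le v(a)+\delta\le v(z)$; hence $q_A$ is an admissible $\mathcal C^2$ lower test function for $v$ at $a$.

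The hard part will be extracting the contradiction from the supersolution inequality, which here bounds the Monge-Amp\`ere mass of a lower test \emph{from above}. Since $dd^c q_A=A\,dd^c|z|^2$ is a constant positive $(1,1)$-form of size $\sim A$, for $A$ large enough $\theta+dd^c q_A$ is positive definite at $0$, so $(\theta+dd^c q_A)_+^n=(\theta+dd^c q_A)^n$ there, and this $n$-fold wedge equals $A^n$ times a fixed positive multiple of $dV$ plus lower order terms; in particular $(\theta+dd^c q_A)_+^n(0)\to+\infty$ as $A\to+\infty$. On the other hand the supersolution property forces $(\theta+dd^c q_A)_+^n\le e^{q_A}C\,dV=e^{v(a)}C\,dV$ at $0$, a fixed finite quantity (here the boundedness of $v$ is used). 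Choosing $A$ large enough yields the contradiction, so $v(a)\ge \ell$ and the equality follows. The only genuine subtlety is to keep the test function admissible while making its Hessian arbitrarily large: this is precisely what the gap $\delta$ coming from the strict inequality $v(a)<\ell$ permits, by allowing the radius $\rho$ to shrink as $A$ grows.
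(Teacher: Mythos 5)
Your proof is correct and follows essentially the same route as the paper: both argue by contradiction, slide the lower test $q_A(z)=v(a)+A|z-a|^2$ under $v$ (shrinking the admissible neighborhood as $A$ grows, a detail you make explicit and the paper leaves implicit), and derive the contradiction from the fact that $(\theta+dd^c q_A)_+^n$ at $a$ grows like $A^n$ while the right-hand side $e^{v(a)}C\,dV$ stays fixed.
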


\begin{proof}
Assume that it were not the case. Then we can find $a\in X$ and $\varepsilon>0$ such that 
\[
v(a) + \varepsilon \leq \liminf_{x\rightarrow a, x\neq a} v(x), 
\]
Thus, we can find a small ball $B(a,r)$ in a local coordinate chart around $a$ such that 
$v(a)+\varepsilon \leq \inf_{x\in B(a,r), x\neq a} v(x)$. Now, for any $A>0$ the function $q_A(z):=A|z-a|^2+v(a)$ is a smooth sub test of $v$ at $a$. If $A$ is large enough then $(\theta+dd^c q_A)$ is positive definite at $a$ and  the inequality $(\theta +dd^c q_A)_+^n \leq e^{q_A} CdV$ does not hold at $a$, which is a contradiction. 
\end{proof}

\bibliographystyle{amsbook}

\end{document}